\reversemarginpar \DeclareMathAlphabet{\mathpzc}{OT1}{pzc}{m}{it}
\newlength{\defbaselineskip}
\newcommand{\setlinespacing}[1]%
           {\setlength{\baselineskip}{#1 \defbaselineskip}}
\renewcommand{\thesection}{\arabic{section}}
\titleformat{\section}{\footnotesize\scshape\filcenter}{\thesection.}{1em}{}
\titleformat{\subsection}{\small\scshape}{\thesubsection.}{1em}{}
\newtheorem{proposition}{\textbf{Proposition}}[section]
\theoremstyle{plain}
\newtheorem{remark}{\textbf{Remark}}[section]
\theoremstyle{plain}
\newtheorem{theorem}{\textbf{Theorem}}[section]
\theoremstyle{plain}
\theoremstyle{plain}
\theoremstyle{plain}
\newtheorem{lemma}{\textbf{Lemma}}[section]
\theoremstyle{plain}
\theoremstyle{plain}
\theoremstyle{plain}
\theoremstyle{plain}
\numberwithin{equation}{section}
\begin{document}
\title{\small{\textbf{STOCHASTIC-PERIODIC HOMOGENIZATION OF MAXWELL'S\\ EQUATIONS WITH NONLINEAR AND PERIODIC\\  CONDUCTIVITY}}}
\author{\footnotesize{JOEL FOTSO TACHAGO$^{\ddagger}$ AND HUBERT NNANG$^{\dagger}$}}

\maketitle
\begin{abstract}
Stochastic-periodic homogenization is studied for the Maxwell equations with nonlinear and periodic electric conductivity. It is shown by the stochastic-two-scale convergence method that the sequence of solutions of a class of highly oscillatory problems converges to the solution of a homogenized Maxwell equation.
\footnote{\begin{flushleft}
		Date: December, 2023.\\	
		2010 Mathematics Subject Classification. 35B27, 35B40, 35J25, 46J10.\\
		Key words and phrases: Maxwell's equations, dynamical system, stochastic two-scale convergence.
	\end{flushleft}  }
\end{abstract}

\section{INTRODUCTION}
\par We present homogenization of Maxwell equations with non linear conductivity using stochastic two scale convergence, that is the asymptotic behavior as $\mathbb{R}_{+}^{*} \ni \varepsilon \rightarrow 0$ of $\varepsilon-$ problem below: 
\begin{equation}\label{1}
\left\{ \begin{array}{l c l}
 \partial_{t}D_{\varepsilon}(x, \omega, t)+ J_{\varepsilon}(x, w, t) &=& $curl$ \;H^{\varepsilon}(x, w, t)+ F^{\varepsilon}(x, \omega, t) \;$in$\; \Omega \times \Lambda \times ]0,\; T[\\
 \partial_{t}B_{\varepsilon}(x, \omega, t)&=& -$curl$\; E^{\varepsilon}(x, \omega, t) \; $in$\; \Omega \times \Lambda \times ]0, \; T[\\
\mathrm{div} B_{\varepsilon}(x, \omega, t) &=& 0\; $in$\; \Omega \times \Lambda \times ]0, \; T[ \\
\mathrm{div} D_{\varepsilon}(x, \omega, t) &=& Q^{\varepsilon}(x, \omega, t)\; $in$ \;\Omega \times \Lambda \times ]0, \; T[\\
E_{\varepsilon}(x, \omega, 0)=E^{0}_{\varepsilon}(x, \omega) &,& H^{\varepsilon}(x, \omega, 0)= H^{0}_{\varepsilon}(x, \omega)\; $in$ \;\Omega \times \Lambda\\
\gamma\wedge E_{\varepsilon}(x, \omega, t) &=& 0 \;$on$\; \partial\Omega \times \Lambda \times ]0, \;T[,
\end{array}
\right.
\end{equation}
with 
\begin{equation}\label{2}
\begin{array}{c}
B^{i}_{\varepsilon}(x, \omega, t)=\mu_{ij}\left(x, \hbox{\LARGE$\tau$}\left(\frac{x}{\varepsilon}\right)\omega, \frac{x}{\varepsilon^{2}}\right)H^{j}_{\varepsilon}(x, \omega, t)\\
J^{i}_{\varepsilon}(x, \omega, t)= \sigma_{i}\left(x, \hbox{\LARGE$\tau$}\left(\frac{x}{\varepsilon}\right)\omega, \frac{x}{\varepsilon^{2}}, E_{\varepsilon}(x, \omega, t)\right)\\
D^{i}_{\varepsilon}(x, \omega, t)=\eta_{ij}\left(x, \hbox{\LARGE$\tau$}\left(\frac{x}{\varepsilon}\right)\omega, \frac{x}{\varepsilon^{2}}\right)E^{j}_{\varepsilon}(x, \omega, t),
\end{array}
\end{equation}
where, $T$ is a fixed strictly positive real, $\Omega$ is a connected bounded open subset of $\mathbb{R}^{3}$ with regular boundaries $\partial\Omega$, $\gamma$ is the unit outgoing normal vector on $\partial\Omega$, $\Lambda$ a probability space, $\mu_{ij}$ and $\eta_{ij}\; (1\leq i,j\leq 3)$ describing magnetic permeability  and permitivity respectively are functions from $\overline{\Omega}\times \Lambda \times \mathbb{R}^{3}_{z}$ into $\mathbb{R}^{9}$ satisfying the following conditions:
\begin{equation}
\mu_{ij}, \eta_{ij} \in \mathcal{C}\left(\overline{\Omega}; L^{\infty}\left(\Lambda; L^{\infty}_{per}(Z)\right)\right), 1\leq i,j\leq 3,
\end{equation}
where $Z=(0, 1)\times \left(0, 1\right)\times \left(0,1\right),$ and $L^{\infty}_{per}(Z)$ is the space of all functions in $L^{\infty}\left(\mathbb{R}^{3}\right)$ that are $Z$-periodic, for all $x\in \overline{\Omega}$ and almost everywhere $(a.e)\; \omega\in\Lambda$
\begin{equation}
\mu_{ij}(x, \omega, z)=\mu_{ji}(x, \omega, z), \; \eta_{ij}(x, \omega, z)=\eta_{ji}(x, \omega, z)
\end{equation}
and there exists two constants $c_{1}, c_{2}>0$ such that for all $x\in \overline{\Omega}$ and for almost all $(\omega, z)\in \Lambda \times Z$, 
\begin{equation}
\begin{array}{l c l}
max\left(|\mu_{ij}(x, \omega, z)\lambda^{j}|, |\eta_{ij}(x, \omega, z)\lambda^{j}|\right)&\leq& c_{1}|\lambda|,\\
min\left(\mu_{ij}(x, \omega, z)\lambda^{j}\lambda^{i}, \eta_{ij}(x, \omega, z)\lambda^{j}\lambda^{i}\right)&\geq& c_{2}|\lambda|^{2}
\end{array}
\end{equation}
for all $\lambda=\left(\lambda^{i}\right) \in \mathbb{R}^{3}. \;\; \sigma=(\sigma_{i})$, describes conductivity, each $\sigma_{i}, 1\leq i\leq 3$ being a function from $\overline{\Omega}\times \Lambda \times \mathbb{R}^{3}_{z}\times \mathbb{R}^{3}_{\xi}$ to $\mathbb{R}$ such that:
\begin{enumerate} 
\item[(i)] for all $x\in \overline{\Omega}, \xi\in \mathbb{R}^{3}$ and for almost all $\omega\in \Lambda$, $\sigma_{i}(x, \omega, ., \xi)$ is measurable and $Z$-periodic, $Z=\left(0, 1\right)\times \left(0, 1\right)\times \left(0, 1\right)$;
\item[(ii)] for all $x\in \overline{\Omega},\xi\in \mathbb{R}^{3}$ and for almost all $z\in \mathbb{R}^{3}, \sigma_{i}(x, ., z,\xi)$ is measurable;
\item[(iii)] for all $x\in \overline{\Omega}$ and for almost all $(\omega, z)\in \Lambda \times \mathbb{R}^{3}_{z},\; \sigma_{i}(x,\omega,z,0)=0$;
\item[(iv)] for almost all $(\omega, z)\in \Lambda \times \mathbb{R}^{3}_{z},\; \sigma_{i}(., \omega, z, .)$ is continuous on $\overline{\Omega}\times \mathbb{R}^{3}_{\xi}$ and there exists a constant $c_{1}>0$ such that 
\begin{equation}\label{6}
\left|\sigma(x, \omega, z, \xi)-\sigma(x, \omega, z, \xi')\right|\leq c_{1}|\xi-\xi'|;
\end{equation}
for all $\left(x, \xi, \xi'\right)\in \overline{\Omega}\times \mathbb{R}^{3}\times \mathbb{R}^{3}$ and for almost all $(\omega, z)\in \Lambda \times \mathbb{R}^{3}_{z}$;
\item[(v)] there exists a constant $\delta\geq 1$ such that for all $\xi_{1}, \xi_{2}\in \mathbb{R}^{3}$, for all $x\in \overline{\Omega}$ and for almost all $(\omega, z)\in \Lambda \times \mathbb{R}^{3}$:
\begin{equation}\label{7}
\left(\sigma(x, \omega, z, \xi_{1})-\sigma(x, \omega, z, \xi_{2}), \xi_{1}-\xi_{2}\right)\geq \delta|\xi_{1}-\xi_{2}|^{2}
\end{equation}
\item[(vi)] for all $\xi_{1},\xi_{2}\in \mathbb{R}^{3}$, for all $x\in \overline{\Omega}$ and for almost all $(\omega, z)\in \Lambda\times \mathbb{R}^{3}$:
\begin{equation}\label{8}
(D_{\xi}\sigma(x, \omega, z, \xi_{1})\xi_{2}, \xi_{2})\geq 0.
\end{equation}
\end{enumerate}
\par Finally, initial values $E^{0}_{\varepsilon}=\big(E^{i0}_{\varepsilon}\big)$ and $H^{0}_{\varepsilon}=\left(H^{i0}_{\varepsilon}\right)$ are taken strongly convergent in $H_{\mathrm{curl}}\left(\Omega; L^{2}(\Lambda)\right)$ to $E^{0}=\left(E^{i0}\right)$ and $H^{0}=\left(H^{i0}\right)$ (respectively), $F_{\varepsilon}=\left(F^{i}_{\varepsilon}\right)$ is a given source in $H_{\mathrm{div}}\left(Q; L^{2}(\Lambda)\right)$ such that the sequence $\left(F_{\varepsilon}\right)_{0<\varepsilon\leq 1}$ converges strongly in $H_{\mathrm{div}}\big(Q; L^{2}(\Lambda)\big)$ to $F=\left(F^{i}\right)$, with in addition $\partial_{t}F_{\varepsilon}$, bounded in  $L^{\infty}\big(0, T; \mathbb{L}^{2}(\Omega\times \Lambda)\big)$ and $\partial^{2}_{t}F_{\varepsilon}$ bounded in $L^{2}\big(0, T; \mathbb{L}^{2}(\Omega\times \Lambda)\big)$.
\par Such problem arises for example from antenna phenomenon where it is established that Ohm's law is not linear see \cite{2, 16}, and references therein.
Coefficient depending on $\big(x,\hbox{\LARGE{$\tau$}}\big(\frac{x}{\varepsilon}\big)\omega, \frac{x}{\varepsilon^{2}}\big)$ reflects on one hand the non homogeneous environment where the phenomenon is taking place, that is the difference in size of quantities involved rending impossible any tentative of direct numerical approximation \cite{30}, justifying therefore the study of asymptotic behavior. On the other hand, it expresses the fact we are taking in account and simultaneously 
predictable and unpredictable aspects. Indeed, a parameter can not be both predictable and unpredictable; moreover from experiment, phenomena do not 
repeat identically in some aspects and repeat identically in some, may be due to experimentation conditions or may be due to lack of knowledge on the phenomenon \cite{14, 29}.
\par Homogenization of Maxwell's equations have been studied by many authors using diverse techniques. In \cite{33, 34} is used Tartar method (\cite{35}). In \cite{37, 38} 
Nguetseng two scale convergence is used (\cite{24}), and recently in \cite{17}, (Woukeng et $al.$ \cite{31}) stochastic two sale convergence method is used for linear
 system. To the best of our knowledge, homogenization of the Maxwell's equations with non linear conductivity was first studied in \cite{38}. It has surely contributed in bringing us close to reality. Nevertheless it seem perfectible as stochastic aspects are considered negligible there. In order to improve understanding of such situation, we consider stochastic aspect and  model them by $(\Lambda, \mathcal{M})$ a probability space on which there is a dynamic system \hbox{\LARGE{$\tau$}} \cite{29}. To the best of our knowledge, existing work in this direction restrict only to stochastic aspects with ergodic dynamical systems \cite{29}, which may 
with no doubt still leave shaded on understanding such phenomena. For sake of simplicity, $\Lambda$ is such that spaces in here are separable. For general
 case, we refer to (\cite{21} page 162 footnote comment). Recent development in homogenization theory allow to go further \cite{31, 32} by considering non ergodic 
dynamical systems and couple deterministic-stochastic aspects.
\par Remaining of this paper is organized as follows: section 2 deals with existence of solutions and section 3 with homogenization results. Except contrary is mentioned, spaces take real values, as nonlinear phenomena are modeled in $\mathbb{R}$ and not in $\mathbb{C}$, (cf\cite{32}) and $dx$ refer to Lebesgue 
measure in $\mathbb{R}^{3}.$
\section{FUNDAMENTALS OF STOCHASTIC-TWO-SCALE CONVERGENCE}
\par For sake of self content and completeness we repeat here basic concepts on stochastic-two-scale convergence \cite{17}.
\subsection{Dynamical systems.} Let $\left(\Lambda, \mathcal{M}, \mu\right)$ be a probability space, and let $\mathcal{L}$ be the $\sigma$-algebra of Lebesgue measurable sets in $\mathbb{R}^{N}$ (integer $N\geq 1$). By an $N-dimensional \; dynamical \; system \; on \; \Lambda$, we meant any family of invertible mappings, $\hbox{\LARGE{$\tau$}}(y) : \Lambda \rightarrow \Lambda,\; y\in \mathbb{R}^{N}$, satisfying the following conditions:
\begin{itemize} 
\item[(i):] $(group\; property)$\; $\hbox{\LARGE{$\tau$}}(O)=Id$ and $\hbox{\LARGE{$\tau$}}(x+y)=\hbox{\LARGE{$\tau$}}(x)\circ\hbox{\LARGE{$\tau$}}(y)$ for all $x, y \in \mathbb{R}^{N}$, ("$\circ$" being the usual composition of mappings, and $O$ the origin in $\mathbb{R}^{N}$);
\item[(ii):]$(invariance)$ for all $y\in \mathbb{R}^{N}$, the mapping $\hbox{\LARGE{$\tau$}}(y)$ is measurable and preserves the $\mu$-measure, i.e., $\mu\left( \hbox{\LARGE{$\tau$}}(y)F\right)=\mu(F)$ for every $F\in \mathcal{M}$;
\item[(iii):] $(measurability)$ for any $F\in \mathcal{M}$, the set $\left\{(y, \omega): \hbox{\LARGE{$\tau$}}(y)\omega \in F\right\}$ is measurable with respect to $\mathcal{L}\otimes \mathcal{M}$, (the $\sigma$-algebra generated by the family of Cartesian products $L\times M$, $L\in \mathcal{L}$ and $M\in \mathcal{M}$).
\end{itemize}
\par Given $1\leq p\leq \infty$, a dynamics system $\hbox{\LARGE{$\tau$}}=\left\{ \hbox{\LARGE{$\tau$}}(y): \Lambda \rightarrow \Lambda, y\in \mathbb{R}^{N} \right\}$ induces a strongly continuous $N$-parameter group of isometries $\mathcal{U}(y): L^{p}(\Lambda) \rightarrow L^{p}(\Lambda), y\in \mathbb{R}^{N}$, defined by $\left( \mathcal{U}(y)f\right)(\omega)=f\left(\hbox{\LARGE{$\tau$}}(y)\omega\right),\; f\in L^{p}(\Lambda)$; for details, see e;g;, \cite{18, 26}. In particular the strong continuity is expressed by $\|\mathcal{U}(y)f-f\|_{L^{p}(\Lambda)}\rightarrow 0$ as $|y|\rightarrow 0$. The $i$th stochastic derivative $D^{\omega}_{i}: L^{p}(\Lambda)\rightarrow L^{p}(\Lambda)\; (1\leq i\leq N)$ is an unbounded linear mapping of domain
\[\mathbf{D}_{i}=\left\{f\in L^{p}(\Lambda) : \lim_{\iota\rightarrow 0} \frac{\mathcal{U}(\iota e_{i})f-f}{\iota}\;\hbox{ exists  in } L^{p}(\Lambda) \right\}\]
(($e_{i}$) being the canonical basis of $\mathbb{R}^{N}$) such that for all $f\in \mathbf{D}_{i}$,
\[ D^{\omega}_{i}f(\omega)=\lim_{\iota \rightarrow 0}\frac{f\left(\hbox{\LARGE{$\tau$}}(\iota e_{i})\omega\right)-f(\omega)}{\iota} \hbox{ for almost all }     \omega\in \Lambda\]
Higher order stochastic derivatives can be defined analogously by setting $D^{\alpha}_{\omega}=D^{\alpha_{1},\omega}_{1}\circ\cdots\circ D^{\alpha_{N},\omega}_{N}$ for every multi-index $\alpha=(\alpha_{1},\cdots \alpha_{N})\in \mathbb{N}^{N}$, where $D^{\alpha_{i},\omega}_{i}=D^{\omega}_{i}\circ\cdots \circ D^{\omega}_{i}\; (\alpha_{i}$-times). This being so, put $\mathcal{D}_{p}(\Lambda)=\cap_{i=1}^{N}\mathbf{D}_{i}$, define
\[\mathcal{D}^{\infty}_{p}(\Lambda)=\left\{f\in L^{p}(\Lambda) : D^{\alpha}_{\omega}f\in \mathcal{D}_{p}(\Lambda) \hbox{ for all } \alpha \in \mathbb{N}^{N} \right\},\]
and thanks to \cite{3}, it can be shown that $\mathcal{D}^{\infty}_{\infty}(\Lambda)\equiv \mathcal{C}^{\infty}(\Lambda)$ is dense in $L^{p}(\Lambda)$ for $1\leq p< \infty$. Endowed with a suitable locally convex topology, $\mathcal{C}^{\infty}(\Lambda)$ is a Fr\'{e}chet space. Any continuous linear form on $\mathcal{C}^{\infty}(\Lambda)$ is referred to as a stochastic distribution; and the space $\mathcal{C}^{\infty}(\Lambda)'$ of all stochastic distributions is endowed with the strong dual topology. The stochastic weak derivative of multi-index $\alpha\; (\alpha\in \mathbb{N}^{N})$ of $T\in \mathcal{C}^{\infty}(\Lambda)'$ is the stochastic distribution $D^{\alpha}_{\omega}T$ given by $\langle D^{\alpha}_{\omega}T, \varphi\rangle=(-1)^{|\alpha|}\langle T, D^{\alpha}_{\omega}\varphi \rangle$  $\left(\varphi \in\mathcal{C}^{\infty}(\Lambda)\right)$. Moreover, by the above density, $L^{p}(\Lambda)\; (1\leq p<\infty)$ is a subspace of $\mathcal{C}^{\infty}(\Lambda)'$ with continuous embedding. So, the stochastic derivative of $f\in L^{p}(\Lambda)$ exists with $\langle D^{\alpha}_{\omega}f, \varphi \rangle = (-1)^{|\alpha|}\int_{\Lambda}fD^{\alpha}_{\omega}\varphi d\mu$ for all $\varphi \in \mathcal{C}^{\infty}(\Lambda)$. Therefore, we define the Sobolev spaces on $\Lambda$ as follows (see \cite{31}):
\[W^{1,p}(\Lambda))=\left\{f\in L^{p}(\Lambda): D^{\omega}_{i}f\in L^{p}(\Lambda)\; (1\leq i \leq N) \right\},\]
where $D^{\omega}_{i}f$ is taken in the distribution sense on $\Lambda$. Equipped with the norm 
\[ \|f\|_{W^{1, p}(\Lambda)}=\left(\|f\|^{p}_{L^{p}(\Lambda)}+\sum_{i=1}^{N}\|D^{\omega}_{i}f\|^{p}_{L^{p}(\Lambda)} \right)^{\frac{1}{p}},\;\; f\in W^{1,p}(\Lambda),\]
$W^{1,p}(\Lambda)$ is a Banach space. Instead of $W^{1,p}(\Lambda)$ we will be concern with one of its semi-normed subspace. To this end, let us recall that:
\begin{enumerate}
\item[\textbf{(iv):}](property) Given $f\in \mathcal{D}^{\infty}_{1}(\Lambda)$ and for a.e. $\omega \in \Lambda$, the function $y\rightarrow f(\hbox{\LARGE{$\tau$}}(y)\omega) $ lies in $\mathcal{C}^{\infty}(\mathbb{R}^{N})$ and $D^{\alpha}_{y}f(\hbox{\LARGE{$\tau$}}(y)\omega)=(D^{\alpha}_{\omega}f)(\hbox{\LARGE{$\tau$}}(y)\omega)$ for any 
$\alpha \in \mathbb{N}^{N}$.
\item[\textbf{(v):}] (definition) By $f\in L^{p}(\Lambda),\; 1\leq p\leq \infty$, is \hbox{\LARGE{$\tau$}}-invariant is meant for any $y\in \mathbb{R}^{N}, \; f(\hbox{\LARGE{$\tau$}}(y)\omega)=f(w)$ for a.e. $\omega \in \Lambda$.
\end{enumerate}
Denoting by $L^{p}_{nv}(\Lambda)$, the set of all \hbox{\LARGE{$\tau$}}-invariant functions in $L^{p}(\Lambda)$, the dynamical system \hbox{\LARGE{$\tau$}} is termed to be ergodic if $L^{p}_{nv}(\Lambda)$ is the set of constant functions.
\begin{enumerate}
\item[\textbf{(vi):}] (property) Given $f\in L^{1}(\Lambda)$, one has $f\in L^{1}_{nv}(\Lambda)$ if and only if $D^{\omega}_{i}f=0$ for each $1\leq i \leq N$.
\end{enumerate}
According to (vi) , we consider $\mathcal{C}^{\infty}(\Lambda)$ provided with the semi-norm
\[\|f\|_{\#,p}=\left(\sum_{i=1}^{N}\|D^{\omega}_{i}f\|^{p}_{L^{p}(\Lambda)}\right)^{\frac{1}{p}},\; f\in \mathcal{C}^{\infty}(\Lambda),\; 1<p< \infty.\]
So topologized, $\mathcal{C}^{\infty}(\Lambda)$ is in general non-separated and non-complete. We denote by $W^{1,p}_{\#}(\Lambda)$ the separated completion of $\mathcal{C}^{\infty}(\Lambda)$ and by $I_{p}$ the canonical mapping of $\mathcal{C}^{\infty}(\Lambda)$ into its separated completion (see, e.g., \cite[Chapter II]{6}  and \cite[Page 29]{15}). Furthermore, as pointed out in \cite{31} (see also \cite[Chapter II]{6}), the distributional stochastic derivative $D^{\omega}_{i}, \; 1\leq i\leq N$, viewed as a mapping of $\mathcal{C}^{\infty}(\Lambda)$ into $L^{p}(\Lambda)$ extends to a unique continuous linear mapping, still denoted $D^{\omega}_{i}$, of $W^{1,p}_{\#}(\Lambda)$ into $L^{p}(\Lambda)$ such that $D^{\omega}_{i}I_{p}(v)=D^{\omega}_{i}v$ for 
$v\in \mathcal{C}^{\infty}(\Lambda)$ and 
\[\|u\|_{W^{1,p}_{\#}(\Lambda)}=\left( \sum_{i=1}^{N}\|D^{\omega}_{i}u\|^{p}_{L^{p}(\Lambda)}\right)^{\frac{1}{p}} \hbox{ for } u \in W^{1,p}_{\#}(\Lambda)\]
\par Moreover $W^{1,p}_{\#}(\Lambda)$ is a reflexive Banach space by the fact that the stochastic gradient $D_{\omega}=(D^{\omega}_{1}, \cdots, D^{\omega}_{N})$ sends isometrically $W^{1,p}_{\#}(\Lambda)$ into $L^{p}(\Lambda)^{N}$. By duality, the operator div$_{\omega} : L^{p'}(\Lambda)^{N} \rightarrow \left( W^{1,p}_{\#}(\Lambda)\right)^{'}\; (p'=\frac{p}{p-1})$ defined by 
\[ \langle \mathrm{div}_{\omega}u,v \rangle = -\langle u, D_{\omega}v\rangle=-\sum_{i=1}^{N} \int_{\Lambda}u_{i}D^{\omega}_{i}v d\mu\]
$(u=(u_{i}) \in L^{p'}(\Lambda)^{N},\; v\in W^{1,p}_{\#}(\Lambda))$ naturally extends the stochastic divergence operator in $\mathcal{C}^{\infty}(\Lambda)$; and we have the following fundamental lemma:

\begin{lemma}\label{l2.1}
Let $\mathrm{\textbf{v}}=(v_{i}) \in L^{p}(\Lambda)^{N}$ such that $\sum\limits_{i=1}^{N}\int_{\Lambda}\textbf{v$\cdot$g}d\mu=0$ \hbox{ for all } \textbf{g} $\in\mathcal{V}^{\omega}_{div}=\left\{\textbf{f}=(f_{i}) \in \mathcal{C}^{\infty}(\Lambda)^{N}\; : \mathrm{div}_{\omega}\textbf{f}=0 \right\},$ then $\mathrm{curl}_{\omega}\textbf{v}=O$ and there exists $u\in W^{1,p}_{\#}(\Lambda)$ such that $\textbf{v}=D_{\omega}u$

\end{lemma}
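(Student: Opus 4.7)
My plan attacks the two conclusions separately, with the first being an easy test-function computation and the second a closed-range / density argument.

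\textbf{Step 1: $\mathrm{curl}_\omega \mathbf{v}=0$.} For fixed indices $i\ne j$ and any $\varphi\in\mathcal{C}^\infty(\Lambda)$, I would build a smooth divergence-free tester $\mathbf{g}=(g_k)$ by setting $g_i=D^\omega_j\varphi$, $g_j=-D^\omega_i\varphi$, and $g_k=0$ otherwise. Because the stochastic partials commute on $\mathcal{C}^\infty(\Lambda)$ (via property (iv) and commutativity of the translation group), $\mathrm{div}_\omega\mathbf{g}=D^\omega_iD^\omega_j\varphi-D^\omega_jD^\omega_i\varphi=0$, so $\mathbf{g}\in\mathcal{V}^\omega_{div}$. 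Inserting this into the hypothesis gives $\int_\Lambda(v_iD^\omega_j\varphi-v_jD^\omega_i\varphi)\,d\mu=0$, which by the definition of the weak stochastic derivative reads $\langle D^\omega_jv_i-D^\omega_iv_j,\varphi\rangle=0$ for every $\varphi\in\mathcal{C}^\infty(\Lambda)$. Hence $D^\omega_iv_j=D^\omega_jv_i$ in $\mathcal{C}^\infty(\Lambda)'$, i.e.\ $\mathrm{curl}_\omega\mathbf{v}=0$.

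\textbf{Step 2: existence of the potential via closed range.} The stochastic gradient $D_\omega:W^{1,p}_\#(\Lambda)\to L^p(\Lambda)^N$ is an isometric embedding, so its range is closed. Its Banach-space adjoint (with respect to the $L^p$--$L^{p'}$ duality) is, up to a sign, the extension $\mathrm{div}_\omega:L^{p'}(\Lambda)^N\to W^{1,p}_\#(\Lambda)'$ introduced just before the lemma. By the closed range theorem,
\[
R(D_\omega)=\bigl\{\mathbf{v}\in L^p(\Lambda)^N : \langle \mathbf{v},\mathbf{g}\rangle=0 \text{ for every } \mathbf{g}\in L^{p'}(\Lambda)^N \text{ with }\mathrm{div}_\omega\mathbf{g}=0\bigr\}.
\]
Thus it is enough to upgrade the hypothesis from orthogonality against $\mathcal{V}^\omega_{div}$ to orthogonality against the full kernel $\ker(\mathrm{div}_\omega)\subset L^{p'}(\Lambda)^N$; once this is done, the unique $u\in W^{1,p}_\#(\Lambda)$ with $D_\omega u=\mathbf{v}$ is delivered by the isometry.

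\textbf{Main obstacle: density of $\mathcal{V}^\omega_{div}$ in $\ker(\mathrm{div}_\omega)\subset L^{p'}(\Lambda)^N$.} This is where the real work lies. Given $\mathbf{g}\in L^{p'}(\Lambda)^N$ with $\mathrm{div}_\omega\mathbf{g}=0$, I would regularize along the group action: pick a standard mollifier $\rho_\varepsilon\in\mathcal{C}_c^\infty(\mathbb{R}^N)$ and set
\[
\mathbf{g}_\varepsilon(\omega)=\int_{\mathbb{R}^N}\rho_\varepsilon(y)\,\mathcal{U}(y)\mathbf{g}(\omega)\,dy.
\]
Strong continuity of $\mathcal{U}$ on $L^{p'}(\Lambda)$ yields $\mathbf{g}_\varepsilon\to\mathbf{g}$ in $L^{p'}(\Lambda)^N$. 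Property (iv) transfers derivatives from $\omega$ to $y$, placing $\mathbf{g}_\varepsilon$ in $\mathcal{C}^\infty(\Lambda)^N$. To see that $\mathrm{div}_\omega\mathbf{g}_\varepsilon=0$, I would pair $\mathbf{g}_\varepsilon$ with an arbitrary $\psi\in\mathcal{C}^\infty(\Lambda)$ and, via Fubini and the invariance of $\mu$ under $\mathcal{U}(y)$, rewrite the resulting double integral as the pairing of $\mathbf{g}$ against a mollified stochastic gradient of $\psi$; the hypothesis $\mathrm{div}_\omega\mathbf{g}=0$ then closes the argument. The technical care here is in handling the distributional definition of $\mathrm{div}_\omega$ on $L^{p'}(\Lambda)^N$ and justifying the interchange of the $y$-integral with the stochastic derivative, which is the point where I expect most of the work. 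With density in hand, the duality argument of Step 2 finishes the proof.
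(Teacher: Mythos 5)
First, note that the paper does not prove this lemma at all: its ``proof'' is the citation ``See \cite{8} Lemma 2.3 (or \cite{31} Proposition 1)'', so you are supplying an argument where the authors supply none. Your Step 1 is correct: for $\varphi\in\mathcal{C}^{\infty}(\Lambda)=\mathcal{D}^{\infty}_{\infty}(\Lambda)$ the field with components $D^{\omega}_{j}\varphi,-D^{\omega}_{i}\varphi$ does lie in $\mathcal{C}^{\infty}(\Lambda)^{N}$, the stochastic partials commute because the translations $\mathcal{U}(y)$ form a commuting group, and the hypothesis then yields $D^{\omega}_{i}v_{j}=D^{\omega}_{j}v_{i}$ distributionally. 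The functional-analytic skeleton of Step 2 is also sound: $D_{\omega}$ is an isometry of $W^{1,p}_{\#}(\Lambda)$ into $L^{p}(\Lambda)^{N}$, its range is the closure of $\{D_{\omega}\varphi:\varphi\in\mathcal{C}^{\infty}(\Lambda)\}$, and by the closed range (or bipolar) theorem this range coincides with the pre-annihilator of $\ker(\mathrm{div}_{\omega})\subset L^{p'}(\Lambda)^{N}$. You have also correctly located the entire difficulty in the density of $\mathcal{V}^{\omega}_{div}$ in that kernel.

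The gap is in the density argument itself. The space $\mathcal{C}^{\infty}(\Lambda)$ is defined in this paper as $\mathcal{D}^{\infty}_{\infty}(\Lambda)$, i.e.\ all stochastic derivatives must lie in $L^{\infty}(\Lambda)$. Mollifying a general $\mathbf{g}\in L^{p'}(\Lambda)^{N}$ along the group action produces $\mathbf{g}_{\varepsilon}$ whose stochastic derivatives $-\int\partial_{y_{i}}\rho_{\varepsilon}(y)\,\mathcal{U}(y)\mathbf{g}\,dy$ are controlled only in $L^{p'}(\Lambda)$, not in $L^{\infty}(\Lambda)$; thus $\mathbf{g}_{\varepsilon}\in\mathcal{D}^{\infty}_{p'}(\Lambda)^{N}$ but in general $\mathbf{g}_{\varepsilon}\notin\mathcal{C}^{\infty}(\Lambda)^{N}$, so it is not an admissible element of $\mathcal{V}^{\omega}_{div}$ and the hypothesis cannot be applied to it. Property (iv), which you invoke, only concerns the smoothness of the realizations $y\mapsto f(\hbox{\LARGE{$\tau$}}(y)\omega)$ and does not upgrade integrability to boundedness. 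The obvious repair, truncating $\mathbf{g}$ before mollifying, destroys the divergence-free constraint, and showing that bounded solenoidal fields are $L^{p'}$-dense in $\ker(\mathrm{div}_{\omega})$ is essentially the substance of the cited result of Bourgeat--Mikeli\`{c}--Wright. So your outline is the right strategy and correctly isolates the hard step, but as written that step is asserted rather than proved.
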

\begin{proof}
See \cite{8} Lemma 2.3 (or \cite{31} Proposition 1)
\end{proof}
\par In the sequel, we will put $W^{1,2}(\Lambda)=H^{1}(\Lambda)$ and $W^{1,2}_{\#}(\Lambda)=H^{1}_{\#}(\Lambda).$
\subsection{\textbf{Stochastic-two-scale convergence.}}We need the notion of weak stochastic-two-scale convergence in $L^{2}(Q\times\Lambda\times\Lambda_{0}),$ that is, given a fundamental sequence $E=(\varepsilon_{n})$ (integers $n\geq 0$) with $\varepsilon_{n}>0$ and $\varepsilon_{n}\rightarrow 0$ as $n\rightarrow \infty$, given $(v_{\varepsilon})_{\varepsilon\in E}\subset L^{2}(Q\times \Lambda\times \Lambda_{0})$ and given $v_{0} \in L^{2}\left(Q\times \Lambda\times\Lambda_{0}; L^{2}_{per}(Y)\right)$, where
\begin{equation}\nonumber
\begin{array}{l}
Q=\Omega\times (0,T),\; Y=Z\times \Theta \hbox{ with } \Theta=(0,1), \hbox{ and }\\
L^{2}_{per}(Y)=\left\{v\in L^{2}_{loc}\left(\mathbb{R}^{4} \right): \; v \; is\; Y\hbox{ -periodic}  \right\},
\end{array}
\end{equation}
the sequence $(v_{\varepsilon})_{\varepsilon\in E}$ is weakly stochastic-two-scale ($stoch$-2s in short) convergent in $L^{2}(Q\times \Lambda\times\Lambda_{0})$ to $v_{0}$ if, as $E\ni\varepsilon \rightarrow 0$, and for all $f\in L^{2}\left(Q; \mathcal{C}^{\infty}(\Lambda\times \Lambda_{0};\mathcal{C}_{per}(Y))\right),$             
\begin{equation}\label{p}
\begin{array}{l}
\int\!\int_{Q\times\Lambda\times\Lambda_{0}}v_{\varepsilon}f^{\varepsilon}dx dt d\mu d\mu^{0} \longrightarrow \\
\int\!\int\!\int_{Q\times\Lambda\times\Lambda_{0}\times Y}v_{0}(x, t, \omega,\omega_{0}, z, \tau)f(x, t, \omega, \omega_{0}, z, \tau)dx dt d\mu d\mu^{0}dz d\tau,
\end{array}
\end{equation}
where $\mathcal{C}_{per}(Y)=\left\{ v\in \mathcal{C}(\mathbb{R}^{4}) : v\;is\; Y\hbox{ -periodic }\right\}$ and
\begin{equation}
f^{\varepsilon}=f^{\varepsilon}(x, t, \omega, \omega_{0})=f\left(x, t, \hbox{\LARGE{$\tau$}} \left(\frac{x}{\varepsilon}\right)\omega, \hbox{\LARGE{$\tau$}}_{0}\left(\frac{t}{\varepsilon}\right)\omega_{0}, \frac{x}{\varepsilon^{2}}, \frac{t}{\varepsilon^{2}} \right)
\end{equation}
$(x, t, \omega, \omega_{0}) \in Q\times \Lambda\times\Lambda_{0},\; \varepsilon >0, \hbox{\LARGE{$\tau$}}$ being a 3-dimensional dynamical system on $\Lambda$ associated to a fixed invariant probability measure $\mu$ and $\hbox{\LARGE{$\tau$}}_{0}$ a 1-dimensional dynamical system on $\Lambda_{0}$ associated to a fixed invariant probability measure $\mu^{0}$. In particular, (\ref{p}) makes sense for $f\in L^{2}\left(Q\times \Lambda\times\Lambda_{0}; \mathcal{C}_{per}(Z)\right)\otimes \mathcal{C}_{per}(\Theta)$, or for $f\in \mathcal{C}\left(\overline{Q}; L^{2}\left(\Lambda\times\Lambda_{0}; L^{\infty}_{per}(Y) \right)\right)$.
\par Let us recall a fundamental compactness result, the proof can be found in \cite{31}.
\begin{lemma}
Let $E$ be a fundamental sequence, and let $(v_{\varepsilon})_{\varepsilon\in E}$ be a bounded sequence in $L^{2}(Q\times\Lambda\times\Lambda_{0})$. Then, a subsequence $E'$ can be extracted from $E$ such that the sequence $(v_{\varepsilon})_{\varepsilon \in E'}$ is a weakly stoch-2s convergent subsequence in $L^{2}(Q\times\Lambda\times\Lambda_{0})$.
\end{lemma}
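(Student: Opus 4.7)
\emph{Plan.} The strategy is a standard Riesz-representation argument. For each $\varepsilon \in E$, introduce the linear functional
\[
L_\varepsilon(f) \;=\; \int\!\!\int_{Q\times \Lambda\times \Lambda_0} v_\varepsilon\, f^\varepsilon \, dx\,dt\,d\mu\,d\mu^0,
\]
defined on the class of admissible test functions appearing in the definition of stoch-2s convergence. I would fix a countable family $\mathcal{D}$ of pure tensors $f(x,t,\omega,\omega_0,z,\tau) = \varphi(x,t)\psi(\omega,\omega_0)\chi(z,\tau)$ with $\varphi \in L^{2}(Q)$, $\psi \in \mathcal{C}^{\infty}(\Lambda\times \Lambda_0)$ and $\chi \in \mathcal{C}_{per}(Y)$, chosen so that $\mathcal{D}$ is dense in $L^{2}(Q\times \Lambda\times \Lambda_0\times Y)$. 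The goal is to extract a subsequence $E'\subset E$ on which $L_\varepsilon(f)$ converges for every $f\in \mathcal{D}$, to extend the limit by continuity to a bounded linear form on the full $L^{2}$-space, and to invoke Riesz representation to produce the desired limit $v_0$.

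The crucial estimate is the convergence
\[
\|f^\varepsilon\|_{L^{2}(Q\times \Lambda\times \Lambda_0)} \;\longrightarrow\; \|f\|_{L^{2}(Q\times \Lambda\times \Lambda_0\times Y)} \qquad (E\ni \varepsilon \to 0)
\]
for every $f\in \mathcal{D}$. On a pure tensor this decouples neatly: the measure-preserving property of the two dynamical systems delivers, for every $(x,t)$ and every $\varepsilon$, the \emph{exact} identity
\[
\int_{\Lambda\times\Lambda_0} \bigl|\psi\bigl(\hbox{\LARGE{$\tau$}}(x/\varepsilon)\omega,\, \hbox{\LARGE{$\tau$}}_0(t/\varepsilon)\omega_0\bigr)\bigr|^{2}\, d\mu\,d\mu^{0} \;=\; \|\psi\|_{L^{2}(\Lambda\times\Lambda_0)}^{2},
\]
so that by Fubini the problem reduces to the purely periodic integral $\int_Q |\varphi(x,t)|^{2}\,|\chi(x/\varepsilon^{2}, t/\varepsilon^{2})|^{2}\,dx\,dt$, which converges to $\|\varphi\|_{L^{2}(Q)}^{2}\int_Y|\chi|^{2}\,dz\,d\tau$ by the classical mean-value theorem for $Y$-periodic continuous functions.

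Combined with the uniform $L^{2}$-bound $\|v_\varepsilon\|_{L^{2}(Q\times\Lambda\times\Lambda_0)}\leq M$ and the Cauchy-Schwarz inequality, this gives $\limsup_{\varepsilon\to 0}|L_\varepsilon(f)| \leq M\,\|f\|_{L^{2}(Q\times\Lambda\times\Lambda_0\times Y)}$ for every $f\in \mathcal{D}$. A diagonal extraction along the countable set $\mathcal{D}$ now produces $E'\subset E$ and a well-defined linear form $L(f)=\lim_{E'\ni \varepsilon\to 0}L_\varepsilon(f)$ on $\mathcal{D}$; the bound above extends $L$ by density to a continuous linear form on $L^{2}(Q\times\Lambda\times\Lambda_0\times Y)$, and the Riesz representation theorem supplies $v_0$ in that space with $L(f)=\int v_0 f$. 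Density of $\mathcal{D}$ in the class of admissible test functions then promotes the defining integral identity of weak stoch-2s convergence from $\mathcal{D}$ to the whole test class.

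The main technical hurdle is the displayed norm convergence; everything else is routine functional analysis. A subsidiary point is the joint $(x,t,\omega,\omega_0)$-measurability of $f^{\varepsilon}$, needed merely to make $L_\varepsilon(f)$ meaningful, and this is guaranteed by axiom (iii) of the definition of a dynamical system. Notably, because the stochastic integration is carried out \emph{before} any limit in $\varepsilon$ is taken, the argument bypasses the stochastic ergodic theorem altogether and hence imposes no ergodicity hypothesis on $\hbox{\LARGE{$\tau$}}$ or $\hbox{\LARGE{$\tau$}}_0$.
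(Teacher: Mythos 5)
The paper offers no proof of this lemma at all --- it simply cites Sango--Woukeng [31] --- and your argument is precisely the standard one used there (and in Nguetseng/Allaire and Bourgeat--Mikeli\'c--Wright before it): a uniform bound on the functionals $L_{\varepsilon}$ obtained from the measure-preserving property of the dynamical systems together with the mean-value theorem for periodic functions, a diagonal extraction over a countable dense family of tensor test functions (which exists because the paper assumes $\Lambda$, $\Lambda_{0}$ are such that the relevant spaces are separable), and the Riesz representation theorem. Your outline is correct; the one point to make explicit in a full write-up is that the final density step, which promotes the limit identity from $\mathcal{D}$ to all admissible $f$, must be performed in the natural norm of the test space $L^{2}\big(Q;\mathcal{C}^{\infty}(\Lambda\times\Lambda_{0};\mathcal{C}_{per}(Y))\big)$ --- so that $\sup_{\varepsilon}\|(f-f_{k})^{\varepsilon}\|_{L^{2}(Q\times\Lambda\times\Lambda_{0})}$ is controlled via invariance and a supremum over $(z,\tau)$ --- and not merely in $L^{2}(Q\times\Lambda\times\Lambda_{0}\times Y)$, which suffices only for the Riesz step.
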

\par Let $H^{1}_{\#}(\Lambda_{0})$ be the separated completion of $\mathcal{C}^{\infty}(\Lambda_{0})$. Following the proof in \cite{11,  12}, we have:
\begin{proposition}
Let $(v_{\varepsilon})_{\varepsilon\in E}$ be a bounded sequence in $L^{2}(Q\times\Lambda\times\Lambda_{0})$ such that $\left(\frac{\partial v_{\varepsilon}}{\partial t}  \right)_{\varepsilon \in E}$ is bounded in $L^{2}(Q\times\Lambda\times\Lambda_{0})$. Then, there exists a subsequence $E'\subset E$ and three functions $v_{0}\in H^{1}\left(0, T; L^{2}\left(\Omega\times\Lambda; L^{2}_{nv}\left(\Lambda_{0}; L^{2}_{per}(Z)\right)\right)\right), \; v^{s}_{1} \in L^{2}\left(Q\times\Lambda; L^{2}_{per}\left(Z; H^{1}_{\#}(\Lambda_{0}) \right)\right)$ and $v^{\sigma}_{1}\in L^{2}\left(Q\times\Lambda\times\Lambda_{0}; L^{2}_{per}\left( Z; H^{1}_{\#}(\Theta)\right) \right)$, such that as $E'\ni \varepsilon \rightarrow 0,$
\begin{equation}
v_{\varepsilon}\rightarrow v_{0}\; in \; L^{2}(Q\times\Lambda\times\Lambda_{0}) \hbox{ -weak stoch-2s }
\end{equation}
and 
\begin{equation}
\frac{\partial v_{\varepsilon}}{\partial t}\rightarrow \frac{\partial v_{0}}{\partial t}+ D^{\omega_{0}}v_{1}^{s} + \frac{\partial v_{1}^{\sigma}}{\partial \tau} \; in \; L^{2}(Q\times\Lambda\times\Lambda_{0}) \hbox{ -weak  stoch-2s}.
\end{equation}
\end{proposition}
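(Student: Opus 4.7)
The plan is to follow the scheme of \cite{11, 12} adapted to the stochastic-periodic setting: extract a common subsequence for $v_\varepsilon$ and $\partial_t v_\varepsilon$ via Lemma 2.2, pin down the invariance properties of the weak stoch-2s limit $v_0$ by scaled integration by parts, and then decompose the remaining defect using Lemma 2.1 together with its $\Theta$-periodic counterpart.

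First, applying Lemma 2.2 to each of the bounded sequences $(v_\varepsilon)$ and $(\partial_t v_\varepsilon)$ and passing to a diagonal extraction, I obtain a single subsequence $E'\subset E$ and limits $v_0,\chi\in L^2(Q\times\Lambda\times\Lambda_0;L^2_{per}(Y))$ such that $v_\varepsilon\to v_0$ and $\partial_t v_\varepsilon\to\chi$ weakly stoch-2s along $E'$.

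Next, I test the integration-by-parts identity
\[
\int_{Q\times\Lambda\times\Lambda_0}\partial_t v_\varepsilon\,f^\varepsilon\,dxdtd\mu d\mu^0=-\int_{Q\times\Lambda\times\Lambda_0}v_\varepsilon\,\partial_t(f^\varepsilon)\,dxdtd\mu d\mu^0
\]
on tensor-product admissible test functions $f=\varphi(x,t)\psi(\omega)g(\omega_0)h(z)k(\tau)$ with $\varphi\in\mathcal{D}(Q)$ and the remaining factors smooth. The chain-rule expansion
\[
\partial_t(f^\varepsilon)=(\partial_t\varphi)\psi g^\varepsilon h^\varepsilon k^\varepsilon+\varepsilon^{-1}\varphi\psi(D^{\omega_0}g)^\varepsilon h^\varepsilon k^\varepsilon+\varepsilon^{-2}\varphi\psi g^\varepsilon h^\varepsilon(\partial_\tau k)^\varepsilon,
\]
combined with the uniform bound on $\partial_t v_\varepsilon$, yields, after scaling by $\varepsilon^2$ and by $\varepsilon$ successively, that $v_0$ is first $\tau$-independent and then $\tau_0$-invariant (the latter by property (vi) applied on $\Lambda_0$). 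Taking finally $g\equiv 1$ and $k\equiv 1$ eliminates the fast-time terms, and the limit then identifies the slow-$t$ distributional derivative of $v_0$ with the $(\omega_0,\tau)$-mean of $\chi$; this places $v_0$ in $H^1(0,T;L^2(\Omega\times\Lambda;L^2_{nv}(\Lambda_0;L^2_{per}(Z))))$.

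Finally, I analyze the residual $r:=\chi-\partial_t v_0$. Revisiting the IBP identity with $\varphi\in\mathcal{D}(Q)$ and arbitrary $g\in\mathcal{C}^\infty(\Lambda_0)$, $k\in\mathcal{C}^\infty_{per}(\Theta)$ shows that, slice by slice in $(x,t,\omega,z)$, the residual $r$ annihilates $\ker D^{\omega_0}\cap\ker\partial_\tau$ in $L^2(\Lambda_0\times\Theta)$. A slice-wise application of Lemma 2.1 in the $\omega_0$ variable, with $(x,t,\omega,z)$ as parameters, produces $v_1^s\in L^2(Q\times\Lambda;L^2_{per}(Z;H^1_\#(\Lambda_0)))$ with $D^{\omega_0}v_1^s$ equal to the $\tau$-mean of $r$, while the standard periodic de Rham argument on $\Theta$ produces $v_1^\sigma\in L^2(Q\times\Lambda\times\Lambda_0;L^2_{per}(Z;H^1_\#(\Theta)))$ with $\partial_\tau v_1^\sigma$ equal to the $\tau$-mean-zero part of $r$. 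Assembling these gives the advertised decomposition. The main obstacle is this last step: the joint decomposition in $(\omega_0,\tau)$ has to keep track of measurability in the parameters $(x,t,\omega,z)$ and to place $v_1^s$ and $v_1^\sigma$ in the correct reflexive Bochner spaces, which relies on the continuous parameter dependence built into Lemma 2.1 and its periodic analog, exactly as in \cite{11, 12}.
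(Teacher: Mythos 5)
The paper offers no proof of this proposition at all — it simply points to the Casado-Díaz--Gayte references — and your argument is a correct reconstruction of exactly that standard scheme (compactness plus the cascade of scaled integration-by-parts identities, then the de Rham-type decomposition of the residual via Lemma 2.1 and its periodic analogue), so it matches the route the paper intends. The only point to make explicit in a full write-up is that in the intermediate $\varepsilon$-scaling step you must already restrict to test functions with $k$ constant in $\tau$ (the $\varepsilon^{-1}\int v_{\varepsilon}(\partial_{\tau}k)^{\varepsilon}$ term is not controlled by the previously established $\tau$-independence of $v_{0}$ alone), which is the usual ordering of the cascade.
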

\par Let $F$ be a Hilbert space, we define Hilbert space
\begin{equation}\nonumber
\begin{array}{l}
H_{\mathrm{curl}}(\Omega; F)=\left\{ v=(v_{i})\in L^{2}(\Omega; F)^{3}; \mathrm{curl} v \in L^{2}(\Omega; F)^{3} \right\} \hbox{ and} \\
 H_{\mathrm{div}}(\Omega; F)=\left\{v=(v_{i})\in L^{2}(\Omega; F)^{3}; \mathrm{div} v\in L^{2}(\Omega; F)  \right\}
\end{array}
\end{equation}
endowed with norms
\begin{equation}\nonumber
\begin{array}{l}
\|v\|_{\mathrm{curl}}=\left(\|v\|^{2}_{L^{2}(\Omega; F)^{3}}+\|\mathrm{curl}\; v\|^{2}_{L^{2}(\Omega; F)^{3}}  \right)^{\frac{1}{2}},\; v\in H_{\mathrm{curl}}(\Omega; F),\\
\|v\|_{\mathrm{div}}=\left(\|v\|^{2}_{L^{2}(\Omega; F)^{3}}+\|\mathrm{div}\; v\|^{2}_{L^{2}(\Omega; F)}  \right)^{\frac{1}{2}},\; v\in H_{\mathrm{div}}(\Omega; F),
\end{array}
\end{equation}
respectively. Since $\Omega$ is bounded then $\mathcal{C}^{1}(\overline{\Omega}; F)^{3}$ is dense in both spaces $H_{\mathrm{curl}}(\Omega; F)$ and $H_{\mathrm{div}}(\Omega; F)$. Furthermore, the mappings $v\rightarrow \gamma \wedge v$ (usual vector product) from $\mathcal{C}^{1}\big(\overline{\Omega}; F\big)^{3}$ into $\mathcal{C}^{1}(\partial \Omega; F)^{3}$ and $v\rightarrow v\cdot n$ (usual Euclidean product) form $\mathcal{C}^{1}\big(\overline{\Omega}; F\big)^{3}$ into 
$\mathcal{C}^{1}(\partial \Omega; F)$ extend by continuity to continuous linear mappings, still denoted $v\rightarrow v\wedge n$ and $v\rightarrow v \cdot n$, from $H_{\mathrm{curl}}(\Omega; F)$ into $L^{2}(\partial \Omega; F)^{3}$ and from $H_{\mathrm{div}}(\Omega; F)$ into $L^{2}(\partial \Omega; F)$, respectively. For simplicity, we will put $L^{2}(0, T; H_{\mathrm{curl}}(\Omega; F))\equiv H_{\mathrm{curl}}(Q; F)$ and $L^{2}(0, T; H_{\mathrm{div}}(\Omega; F))\equiv H_{\mathrm{div}}(Q; F).$
\begin{proposition}
Let $(v_{\varepsilon})_{\varepsilon \in E} = \big(v_{\varepsilon}^{1}, v^{2}_{\varepsilon}, v^{3}_{\varepsilon}\big)_{\varepsilon \in E}$ be a bounded sequence in 
$H_{\mathrm{curl}}\big(Q; L^{2}(\Lambda \times \Lambda_{0})\big)$. Then there exist a subsequence $E'\subset E$, functions
$$ v_{0}\in L^{2}\left(Q\times \Lambda_{0}; L^{2}_{per}(\Theta) \right)^{3}, \; v_{0}^{s}\in L^{2}\left(Q\times \Lambda_{0}; L^{2}_{per}\left(\Theta; H^{1}_{\#}(\Lambda)\right)\right)$$
$$and\;  v^{\sigma}_{0}\in L^{2}\left(Q\times \Lambda \times \Lambda_{0}; L^{2}_{per}\left(\Theta; H^{1}_{\#}(Z)\right)\right),$$ with 
$v_{0}+D_{\omega}v_{0}^{s}\in H_{\mathrm{curl}}\left(Q; L^{2}_{per}\left( \Theta; L^{2}(\Lambda_{0})  \right)   \right),$ such that, as $E' \ni \varepsilon \rightarrow 0$, one has 
\begin{equation}
v_{\varepsilon} \rightarrow v_{0}+D_{\omega}v_{0}^{s}+D_{z}v_{0}^{\sigma}\; in\; L^{2}(Q\times \Lambda \times \Lambda_{0})^{3}\hbox{ -weak stoch-2s}.
\end{equation}
Moreover, we have for $\varphi = (\varphi_{i}) \in \mathcal{D}(Q)^{3}, \; g_{0}\in \mathcal{C}^{\infty}(\Lambda_{0}), \; g\in \mathcal{C}^{\infty}(\Lambda)\cap L^{2}_{nv}(\Lambda)$ and $\psi \in \mathcal{C}_{per}(\Theta),$
\begin{equation}
\begin{array}{l}
\lim_{E'\ni \varepsilon \rightarrow 0} \int\int_{Q\times \Lambda \times \Lambda_{0}}\mathrm{curl} v_{\varepsilon}\cdot\varphi g(\omega)g_{0}\left( \tau_{0}\left( \frac{t}{\varepsilon}\right)\omega_{0}\right)\psi\left(\frac{t}{\varepsilon^{2}}\right)dx dt d\mu d\mu^{0}\\
= \int\int\int_{Q\times \Lambda\times \Lambda_{0}\times \Theta}\mathrm{curl}(v_{0}+D_{\omega}v^{s}_{0})\cdot\varphi g(\omega)g_{0}(\omega_{0})\psi(\tau)dx dt d\mu d\mu^{0}d\tau.
\end{array}
\end{equation}
\end{proposition}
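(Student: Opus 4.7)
The plan is a three-stage analysis: (i) extract a common weakly stoch-2s convergent subsequence for $(v_\varepsilon)$ and $(\mathrm{curl}\, v_\varepsilon)$; (ii) establish the decomposition $V = v_0 + D_\omega v_0^s + D_z v_0^\sigma$ of the stoch-2s limit $V$ of $(v_\varepsilon)$ by testing against $\varepsilon$-rescaled smooth functions; (iii) specialise to the class of test functions in the statement to identify the curl limit and deduce the regularity $v_0 + D_\omega v_0^s \in H_{\mathrm{curl}}(Q; L^2_{per}(\Theta; L^2(\Lambda_0)))$. Step~(i) is immediate from Lemma~2.2 applied to both sequences, which are bounded in $L^2(Q \times \Lambda \times \Lambda_0)^3$ by hypothesis; let $V$ and $W$ denote the resulting limits.

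For step~(ii), take any smooth $\Phi(x,t,\omega,\omega_0,z,\tau)$ compactly supported in $(x,t)$; the chain rule yields
\[
\mathrm{curl}_x\bigl[\Phi^\varepsilon\bigr] = (\mathrm{curl}_x\Phi)^\varepsilon + \varepsilon^{-1}(\mathrm{curl}_\omega\Phi)^\varepsilon + \varepsilon^{-2}(\mathrm{curl}_z\Phi)^\varepsilon.
\]
Integration by parts gives $\int \mathrm{curl}\, v_\varepsilon \cdot \Phi^\varepsilon = \int v_\varepsilon \cdot \mathrm{curl}_x[\Phi^\varepsilon]$. Multiplying by $\varepsilon^2$, all terms vanish in the limit except $\int v_\varepsilon \cdot (\mathrm{curl}_z\Phi)^\varepsilon \to \int V \cdot \mathrm{curl}_z\Phi$, which must then equal $0$. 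Varying $\Phi$ in $z$ alone forces $\mathrm{curl}_z V = 0$ in $\mathcal{D}'(Z)$ for a.e.\ $(x,t,\omega,\omega_0,\tau)$, and the $Z$-periodic de~Rham lemma produces $V = \overline V(x,t,\omega,\omega_0,\tau) + D_z v_0^\sigma$ with $v_0^\sigma \in H^1_\#(Z)$. Repeating the same argument with the scaling $\varepsilon$ and restricting $\Phi$ to be $z$-independent (so that the $\varepsilon^{-2}$ term is absent), one obtains $\int \overline V \cdot \mathrm{curl}_\omega \Phi = 0$, hence $\mathrm{curl}_\omega \overline V = 0$ distributionally on $\Lambda$. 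The stochastic Helmholtz decomposition on $\Lambda$ afforded by Lemma~\ref{l2.1} then gives $\overline V = v_0 + D_\omega v_0^s$ with $v_0 \in L^2_{nv}(\Lambda)^3$ (essentially independent of $\omega$) and $v_0^s \in H^1_\#(\Lambda)$, producing the claimed decomposition.

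For step~(iii), since $g$ is $\tau$-invariant and $\psi$ depends only on $\tau$, the test function $\varphi\, g\, g_0^\varepsilon\, \psi^\varepsilon$ carries no spatial oscillation, so $\mathrm{curl}_x[\varphi\, g\, g_0^\varepsilon\, \psi^\varepsilon] = (\mathrm{curl}_x\varphi)\, g\, g_0^\varepsilon\, \psi^\varepsilon$. Integration by parts in $x$ rewrites the left-hand side as $\int v_\varepsilon \cdot (\mathrm{curl}_x\varphi)\, g\, g_0^\varepsilon\, \psi^\varepsilon$; passing to the stoch-2s limit of $v_\varepsilon$ and substituting the decomposition of $V$, the $D_z v_0^\sigma$-contribution vanishes since $\int_Z D_z v_0^\sigma\, dz = 0$ while the test is $z$-independent, and one obtains $\int (v_0 + D_\omega v_0^s)\cdot(\mathrm{curl}_x\varphi)\, g\, g_0\, \psi$ in the limit. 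The left-hand side is bounded by $\|\mathrm{curl}\, v_\varepsilon\|_{L^2}\cdot\|\varphi\|_{L^2} \le C\|\varphi\|_{L^2}$, so this functional of $\varphi$ is $L^2$-continuous; by duality, $v_0 + D_\omega v_0^s$ admits a weak $\mathrm{curl}_x$ in $L^2$, lies in $H_{\mathrm{curl}}(Q; L^2_{per}(\Theta; L^2(\Lambda_0)))$, and the limit identity reads $\int \mathrm{curl}(v_0 + D_\omega v_0^s)\cdot\varphi\, g\, g_0\, \psi$, as required. The main technical hurdle is the stochastic de~Rham decomposition in step~(ii) --- producing $\overline V = v_0 + D_\omega v_0^s$ with $v_0$ $\tau$-invariant --- which requires the Hodge-type analysis embodied in Lemma~\ref{l2.1} and the $W^{1,p}_\#(\Lambda)$ framework of Section~2.1.
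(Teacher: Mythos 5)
The paper does not actually prove this proposition --- it is one of the preliminaries recalled from \cite{17} --- so there is no in-text argument to compare against. Judged on its own, your proof follows exactly the route that the cited literature takes: compactness for $v_{\varepsilon}$ and $\mathrm{curl}\,v_{\varepsilon}$, oscillating test functions at the scales $\varepsilon^{2}$ and $\varepsilon$ to force $\mathrm{curl}_{z}V=0$ and $\mathrm{curl}_{\omega}\overline{V}=0$, de~Rham-type decompositions in $z$ and in $\omega$, and identification of the curl limit by integration by parts against spatially non-oscillating test functions. Steps (i) and (iii) are sound, including the use of the $\tau$-invariance of $g$ to make $\varphi\, g\, g_{0}^{\varepsilon}\,\psi^{\varepsilon}$ an admissible (non-oscillating in $x$) test function and the observation that the $D_{z}v_{0}^{\sigma}$ contribution drops out because $\int_{Z}D_{z}v_{0}^{\sigma}\,dz=0$.

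The one step that is invoked too loosely is the passage from $\mathrm{curl}_{\omega}\overline{V}=0$ to $\overline{V}=v_{0}+D_{\omega}v_{0}^{s}$. Lemma~\ref{l2.1} cannot be applied here as stated: its hypothesis is orthogonality of the field to \emph{all} divergence-free elements of $\mathcal{C}^{\infty}(\Lambda)^{N}$, which is strictly stronger than $\mathrm{curl}_{\omega}\overline{V}=0$, and its conclusion is a \emph{pure} stochastic gradient with no invariant component. What you actually need is the Weyl/Helmholtz decomposition of $L^{2}(\Lambda)^{3}$ into invariant, potential and solenoidal fields, together with the fact that a field which is simultaneously weakly curl-free and weakly divergence-free on $\Lambda$ is $\tau$-invariant; this is available in \cite{8} and \cite{31} but is a genuinely different statement from Lemma~\ref{l2.1} and should be cited or proved separately. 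Relatedly, this route yields a $v_{0}$ that is a $\tau$-invariant function of $\omega$, whereas the proposition asserts $v_{0}\in L^{2}(Q\times\Lambda_{0};L^{2}_{per}(\Theta))^{3}$ with no $\Lambda$-dependence; the two agree only in the ergodic case, so in the non-ergodic setting your (invariant) formulation is the provable one and the discrepancy lies with the statement, not your argument. Finally, in step (iii) the $L^{2}$-continuity is only established for each fixed $g,g_{0},\psi$; it is cleaner to use the stoch-2s limit $W$ of $\mathrm{curl}\,v_{\varepsilon}$ already extracted in step (i) and identify $\mathrm{curl}(v_{0}+D_{\omega}v_{0}^{s})$ with the appropriate average of $W$, which gives the membership in $H_{\mathrm{curl}}$ directly.
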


\begin{proposition}
Let $(v_{\varepsilon})_{\varepsilon \in E} = \big(v_{\varepsilon}^{1}, v^{2}_{\varepsilon}, v^{3}_{\varepsilon}\big)_{\varepsilon \in E}$ be a bounded sequence in 
$H_{\mathrm{div}}\big(Q; L^{2}(\Lambda \times \Lambda_{0})\big)$. Then there exist a subsequence $E'\subset E$, a function $ v_{0}\in H_{\mathrm{div}}\left(\Omega; L^{2}\left(\Lambda\times \Lambda_{0}; L^{2}_{per}(\Theta)\right) \right)$, and a function $v_{1} \in L^{2}\left(Q\times \Lambda\times \Lambda_{0}; L^{2}_{per}\left(\Theta; L^{2}_{\#}(Z)^{3}\right)\right)$
with $\mathrm{div}_{\omega}v_{0}=0$ and $\mathrm{div}_{z}v_{1}=0$ such that, as $E'\ni \varepsilon \rightarrow 0$, one has
\begin{equation}
v_{\varepsilon} \rightarrow v_{0}+v_{1}\; in\; L^{2}(Q\times\Lambda\times \Lambda_{0})^{3}\hbox{ -weak stoch-2s},
\end{equation}
and for $\varphi \in \mathcal{D}(Q),\; g_{0} \in \mathcal{C}^{\infty}(\Lambda_{0}), \; g\in\mathcal{C}^{\infty}(\Lambda)\cap L^{2}_{nv}(\Lambda)$ and 
$\psi\in \mathcal{C}_{per}(\Theta)$
\begin{equation}
\begin{array}{c}
\lim_{E'\ni \varepsilon \rightarrow 0} \int\int_{Q\times \Lambda \times \Lambda_{0}}\mathrm{div}\; v_{\varepsilon}\varphi g(\omega)g_{0}\left( \hbox{\LARGE{$\tau$}}_{0}\left( \frac{t}{\varepsilon}\right)\omega_{0}\right)\psi\left(\frac{t}{\varepsilon^{2}} \right)dx dt d\mu d\mu^{0}\\
= \int\int\int_{Q\times \Lambda\times \Lambda_{0}\times \Theta}\mathrm{div}\; v_{0}\varphi\otimes g \otimes g_{0}\otimes \psi(x, t, \omega, \omega_{0}, \tau)dx dt d\mu d\mu^{0}d\tau,
\end{array}
\end{equation}
where $L^{2}_{\#}(Z)=\left\{v\in L^{2}_{per}(Z); \int_{Z}vdz=0  \right\}.$
\end{proposition}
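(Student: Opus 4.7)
The plan is to mimic the argument for the curl proposition just above, combining the basic compactness Lemma 2.2 with a hierarchy of oscillating test functions to extract the divergence-free constraints. First, I would apply Lemma 2.2 (with a further extraction for $\mathrm{div}\,v_\varepsilon$) to obtain a subsequence $E'\subset E$ along which $v_\varepsilon$ converges weakly stoch-2s in $L^{2}(Q\times\Lambda\times\Lambda_{0})^{3}$ to some $V=V(x,t,\omega,\omega_{0},z,\tau)\in L^{2}(Q\times\Lambda\times\Lambda_{0};L^{2}_{per}(Y))^{3}$, and simultaneously $\mathrm{div}\,v_\varepsilon$ converges weakly stoch-2s. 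I would then split $V=v_{0}+v_{1}$ by setting $v_{0}(x,t,\omega,\omega_{0},\tau):=\int_{Z}V(\cdot,z,\cdot)\,dz$ and $v_{1}:=V-v_{0}$, so that $v_{0}$ is $z$-independent and $v_{1}\in L^{2}_{\#}(Z)^{3}$ pointwise in the remaining variables.

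To prove $\mathrm{div}_{z}v_{1}=0$, I would start from the integration-by-parts identity
\[\int \mathrm{div}\,v_\varepsilon\,\Phi^\varepsilon\,dx\,dt\,d\mu\,d\mu^{0}=-\int v_\varepsilon\cdot\nabla_x\Phi^\varepsilon\,dx\,dt\,d\mu\,d\mu^{0}\]
with $\Phi(x,t,z)=\varphi(x,t)\chi(z)$, $\varphi\in\mathcal{D}(Q)$, $\chi\in\mathcal{C}^{\infty}_{per}(Z)$, multiplied through by $\varepsilon^{2}$. The left-hand side vanishes in the limit because $\mathrm{div}\,v_\varepsilon$ is bounded and $\varepsilon^{2}\Phi^\varepsilon\to 0$ strongly. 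On the right-hand side, among the contributions coming from differentiating $\chi$ in $x/\varepsilon^{2}$ (order $\varepsilon^{-2}$) and from differentiating $\varphi$ in $x$ (order $1$), only the former survives after multiplication by $\varepsilon^{2}$, yielding $\int V\cdot\varphi\,\nabla_{z}\chi=0$. Since $\int_{Z}v_{0}\cdot\nabla_{z}\chi\,dz=0$ (as $v_{0}$ is $z$-independent), this reduces to $\int v_{1}\cdot\varphi\,\nabla_{z}\chi=0$, i.e. $\mathrm{div}_{z}v_{1}=0$.

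The constraint $\mathrm{div}_{\omega}v_{0}=0$ follows analogously by taking $\Phi=\varphi(x,t)\tilde f(\omega)g_{0}(\omega_{0})\psi(\tau)$ with $\tilde f\in\mathcal{C}^{\infty}(\Lambda)$, multiplied by $\varepsilon$: only the $\varepsilon^{-1}\varphi(D_{\omega}\tilde f)^\varepsilon g_{0}^\varepsilon\psi^\varepsilon$ term in $\nabla_x\Phi^\varepsilon$ survives, producing $\int V\cdot\varphi\,D_{\omega}\tilde f\,g_{0}\psi=0$; integrating in $z$ replaces $V$ by $v_{0}$ and gives $\mathrm{div}_{\omega}v_{0}=0$ in the distributional sense on $\Lambda$ (via Lemma 2.1 and the duality defining $\mathrm{div}_{\omega}$). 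Finally, to obtain $v_{0}\in H_{\mathrm{div}}(\Omega;L^{2}(\Lambda\times\Lambda_{0};L^{2}_{per}(\Theta)))$ together with the explicit limit identification stated in the proposition, I would test with $\Phi=\varphi\otimes g\otimes g_{0}\otimes\psi$ where crucially $g\in\mathcal{C}^{\infty}(\Lambda)\cap L^{2}_{nv}(\Lambda)$, so that $D_{\omega}g=0$; this kills the $\varepsilon^{-1}$ term in $\nabla_x\Phi^\varepsilon$, leaving only $\nabla_x\varphi\cdot g^\varepsilon g_{0}^\varepsilon\psi^\varepsilon$, and passing to the stoch-2s limit (using $\int_{Z}v_{1}\,dz=0$) yields the claimed formula.

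The main technical obstacle is the careful bookkeeping of the different $\varepsilon$-scales when the test function oscillates simultaneously at scale $\varepsilon$ in the stochastic variable via $\hbox{\LARGE$\tau$}(x/\varepsilon)\omega$ and at scale $\varepsilon^{2}$ in $x/\varepsilon^{2}$ and $t/\varepsilon^{2}$, so that the cross terms genuinely vanish in the correct order and only the targeted contribution survives in each limit passage. The admissibility of the composite test functions as elements of $L^{2}(Q;\mathcal{C}^{\infty}(\Lambda\times\Lambda_{0};\mathcal{C}_{per}(Y)))$ (the class specified after the definition of weak stoch-2s convergence) is what legitimizes each such passage.
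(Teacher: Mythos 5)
The paper itself offers no proof of this proposition: it appears in the preliminary section where the stoch-2s machinery is ``repeated for the sake of self-content'' and is recalled from \cite{17} and \cite{31}, so there is no in-paper argument to compare yours against. Judged on its own terms, your proposal follows the standard scale-separation argument and its core steps are correct. The extraction via Lemma 2.2, the decomposition $v_{0}=\int_{Z}V\,dz$, $v_{1}=V-v_{0}$, the derivation of $\mathrm{div}_{z}v_{1}=0$ by multiplying the integration-by-parts identity by $\varepsilon^{2}$ against $\varphi\otimes\chi(x/\varepsilon^{2})$, and the derivation of $\mathrm{div}_{\omega}v_{0}=0$ by multiplying by $\varepsilon$ against $\varphi\,\tilde f(\hbox{\LARGE$\tau$}(x/\varepsilon)\omega)$ (using property (iv) to write $\nabla_{x}[\tilde f^{\varepsilon}]=\varepsilon^{-1}(D_{\omega}\tilde f)^{\varepsilon}$, and then the duality definition of $\mathrm{div}_{\omega}$ rather than Lemma 2.1, which addresses the converse implication) are all sound, as is the observation that invariance of $g$ kills the $\varepsilon^{-1}$ term and yields the stated limit formula.

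The one step that is asserted rather than proved is the membership $v_{0}\in H_{\mathrm{div}}\left(\Omega;L^{2}\left(\Lambda\times\Lambda_{0};L^{2}_{per}(\Theta)\right)\right)$. Your final computation identifies the distributional $x$-divergence of $v_{0}$ only when paired against test functions of the special form $\varphi\otimes g\otimes g_{0}\otimes\psi$ with $g$ \emph{invariant}, and such products are not dense in $L^{2}(\Lambda\times\Lambda_{0}\times\Theta)$ unless the dynamical system is ergodic; for non-invariant $g$ the $\varepsilon^{-1}(D_{\omega}g)^{\varepsilon}$ term survives and the limit is indeterminate, which is precisely why the proposition restricts the identification to invariant $g$. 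To close this, one should separately extract a weak stoch-2s limit $w$ of the bounded sequence $\mathrm{div}\,v_{\varepsilon}$ and identify $\mathrm{div}_{x}v_{0}$ with the appropriate average of $w$ (a measurability/density argument in the remaining variables), rather than read it off from the invariant-$g$ identity alone. This is a genuine, if localized, gap; the rest of the argument is the expected one.
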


\section{EXISTENCE AND UNIQUENESS OF SOLUTION}
\par We  prove the existence and uniqueness of the result and provide some estimations necessary for the study of convergence of the solutions of problem (\ref{1})-(\ref{7}). In what follows the space $\Lambda$ is such that the Sobolev spaces in use are separable. 

\subsection{\textbf{Some fundamental results}.} The following result is fundamental for the existence result for  fixed $\varepsilon > 0$, of a couple of solution
of problem (\ref{1})-(\ref{7}) and characterization of stochastic two scale convergence limit of nonlinear conductivity.  

\begin{lemma}
Let $a:\mathbb{R}\longrightarrow \mathbb{R}$, a continuous function verifying $|a(y)|\leq c\big(|y|^{p-1}\big), \; p>1,\; c\in \mathbb{R}_{+}^{*}$ for all $y$. Let $p'$ be such that $\frac{1}{p}+\frac{1}{p'}=1$ then $a(\cdot)$ from $L^{p}(\mathbb{R}) \longrightarrow L^{p'}(\mathbb{R})$ is continuous, $L^{p}(\mathbb{R})$ and $L^{p'}(\mathbb{R})$ with their respective norms.
\end{lemma}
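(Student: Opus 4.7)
The plan is to establish first that $a(\cdot):L^p(\mathbb{R})\to L^{p'}(\mathbb{R})$ is well-defined as a map, then to obtain continuity via the standard subsequence-extraction argument for Nemytskii-type operators, since the assumption that $a$ is merely continuous (not Lipschitz or monotone) rules out any direct estimate of $\|a(u)-a(v)\|_{p'}$ in terms of $\|u-v\|_p$.

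For well-definedness, I would observe that the conjugate exponents satisfy $(p-1)p'=p$, so the pointwise growth hypothesis gives, for any measurable $u$,
\[
|a(u(x))|^{p'}\leq c^{p'}|u(x)|^{(p-1)p'}=c^{p'}|u(x)|^p.
\]
Integrating, $\|a(u)\|_{L^{p'}(\mathbb{R})}\leq c\,\|u\|_{L^p(\mathbb{R})}^{p/p'}$, so in particular $a(u)\in L^{p'}(\mathbb{R})$ whenever $u\in L^p(\mathbb{R})$.

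For continuity, I would fix a sequence $u_n\to u$ in $L^p(\mathbb{R})$ and prove that every subsequence admits a further subsequence along which $a(u_n)\to a(u)$ in $L^{p'}(\mathbb{R})$; this forces the full sequence to converge and yields the desired continuity. Given a subsequence, invoke the well-known consequence of $L^p$-convergence (via the Riesz--Fischer argument) to extract a further subsequence $(u_{n_k})$ such that $u_{n_k}\to u$ almost everywhere and such that there exists $g\in L^p(\mathbb{R})$ with $|u_{n_k}(x)|\leq g(x)$ a.e.; passing to the a.e. limit yields $|u(x)|\leq g(x)$ a.e. as well. By continuity of $a$, we get $a(u_{n_k})\to a(u)$ a.e., while the growth estimate gives the uniform domination
\[
|a(u_{n_k})-a(u)|^{p'}\leq 2^{p'}\bigl(|a(u_{n_k})|^{p'}+|a(u)|^{p'}\bigr)\leq 2^{p'}c^{p'}\bigl(|g|^p+|u|^p\bigr)\in L^1(\mathbb{R}).
\]
Lebesgue's dominated convergence theorem then yields $\|a(u_{n_k})-a(u)\|_{L^{p'}(\mathbb{R})}\to 0$.

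The main obstacle is the need to produce an $L^p$ majorant of the subsequence; this is the only nontrivial ingredient, and it is precisely what forces the detour through a.e. convergent subsequences rather than a direct estimate. The matching of exponents $(p-1)p'=p$ is what makes the growth hypothesis $|a(y)|\leq c|y|^{p-1}$ exactly right to send $L^p$ into $L^{p'}$, and the nonlinearity $a$ being only continuous is what prevents a quantitative modulus of continuity; apart from this, the argument is routine.
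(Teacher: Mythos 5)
Your proof is correct, and it follows the same overall strategy as the paper's (the ``every subsequence has a further subsequence converging to $a(u)$'' principle, combined with the partial converse of the dominated convergence theorem, i.e.\ Theorem 4.9 of Brezis). The key step, however, is handled differently, and your version is the more solid one. The paper first extracts from the bounded sequence $\big(a(u_{\sigma_1(n)})\big)_n$ in $L^{p'}(\mathbb{R})$ a subsequence ``converging to $f$'' and then identifies $f=a(u)$ through two successive a.e.\ extractions; but boundedness in $L^{p'}$ only yields a \emph{weakly} convergent subsequence (by reflexivity), and Theorem 4.9 cannot then be applied to that weakly convergent sequence to produce a.e.\ convergence to $f$, so the paper's identification step is delicate as written. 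You instead use the full strength of Theorem 4.9 on the domain side: the extracted subsequence $u_{n_k}\to u$ a.e.\ together with an $L^p$ majorant $g$, so that $|a(u_{n_k})-a(u)|^{p'}\leq 2^{p'}c^{p'}\big(|g|^p+|u|^p\big)\in L^1(\mathbb{R})$ and dominated convergence gives strong convergence $a(u_{n_k})\to a(u)$ in $L^{p'}$ directly, with no need to presuppose a limit $f$. The exponent computation $(p-1)p'=p$ that you make explicit for well-definedness is implicit in the paper. In short: same skeleton, but your use of the dominating function turns the weakest link of the paper's argument into a clean application of dominated convergence.
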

\begin{proof}
Let $\left(u_{n}\right)_{n\in \mathbb{N}}$ be such that 
\begin{equation}\label{17}
u_{n}\longrightarrow u\;\hbox{ in }\; L^{p}(\mathbb{R}).
\end{equation}
Let us show that $$a(u_{n})\longrightarrow a(u),\; in\; L^{p'}(\mathbb{R}).$$
$|a(y)|\leq c\big(|y|^{p-1}\big),\; p>1,\; c\in\mathbb{R}_{+}^{*}$ and (\ref{17}) leads to $a(u_{n})$ is bounded in $L^{p'}(\mathbb{R})$. Let $\sigma_{i},\; 1\leq i\leq 4$ be one to one and non-decreasing functions from $\mathbb{N}$ to $\mathbb{N}$. Then $\big(a(u_{\sigma_{1}(n)})\big)_{n}$ is a subsequence of $(a(u_{n}))_{n}$ bounded in $L^{p'}(\mathbb{R})$. $\big(a(u_{\sigma_{1}(n)})\big)_{n}$ therefore has a sub-sequence $\big(a(u_{\sigma_{2}(\sigma_{1}(n))})\big)_{n}$ converging to $f$. From (\ref{17}), the subsequence $\big(u_{\sigma_{2}(\sigma_{1}(n))}\big)_{n}$ of $(u_{n})_{n}$ converges  to $u$ in $L^{p}(\mathbb{R})$. Theorem 4.9 in page 94 of \cite{10} gives the existence of a subsequence $\big(u_{\sigma_{3}(\sigma_{2}(\sigma_{1}(n)))}\big)_{n}$ such that 
$$u_{\sigma_{3}(\sigma_{2}(\sigma_{1}(n)))} \longrightarrow u \; a.e \; in \; \mathbb{R}.$$
$a: \mathbb{R} \longrightarrow \mathbb{R}$ being continuous, $a\left(u_{\sigma_{3}(\sigma_{2}(\sigma_{1}(n)))}\right) \longrightarrow a(u) \; a.e\; in\; \mathbb{R}$. As $\big(a(u_{\sigma_{2}(\sigma_{1}(n))})\big)_{n}$ converges to $f \; in \; L^{p'}(\mathbb{R}),\; \big(a(u_{\sigma_{3}(\sigma_{2}(\sigma_{1}(n)))})\big)_{n}$ converges to $f$ in $L^{p'}(\mathbb{R})$.Theorem 4.9 in page 94 of \cite{10} yields the existence of a subsequence $\big(a(u_{\sigma_{4}(\sigma_{3}(\sigma_{2}(\sigma_{1}(n))))})\big)_{n}$ such that 
$$a(u_{\sigma_{4}(\sigma_{3}(\sigma_{2}(\sigma_{1}(n))))}) \longrightarrow f \; a.e\; in\; \mathbb{R}$$
and as $a\left(u_{\sigma_{3}(\sigma_{2}(\sigma_{1}(n)))}\right) \longrightarrow a(u) \; a.e\; in\; \mathbb{R}$, we obtain $f=a(u)$ and $\big(a(u_{\sigma_{2}(\sigma_{1}(n))})\big)_{n}$ converges to $a(u) \; in\; L^{p'}(\mathbb{R})$.
\par Therefore, any subsequence of $(a(u_{n}))_{n}$ admits a subsequence converging in $L^{p'}(\Omega)$ to $a(u)$; we conclude that $(a(u_{n}))_{n}$ converges in $L^{p'}(\mathbb{R})$ to $a(u)$ and $a(\cdot)$ is continuous from $L^{p}(\mathbb{R})$ to $L^{p'}(\mathbb{R})$.
\end{proof}
\subsection{\textbf{Trace results and consequences.}} We refer to \cite{22, 27, 32} for details. Properties of $\sigma((i)-(iv))$ allow to justify traces we are in need of.
\par \textbf{First case:} $u\in \big[\mathcal{C}\left(\overline{Q}\right) \otimes \mathcal{C}^{\infty}(\Lambda)\otimes \mathcal{C}^{\infty}_{per}(Z) \big]^{3}$.
\par As $u\in \big[\mathcal{C}\left(\overline{Q}\right) \otimes \mathcal{C}^{\infty}(\Lambda)\otimes \mathcal{C}^{\infty}_{per}(Z) \big]^{3}$ then for fix 
$(x, t, z)$, the function $\omega \longrightarrow u(x, t,\omega, z)$ is an element of $\mathcal{C}^{\infty}(\Lambda)^{3}$, and with properties $(ii)$ and $(iv)$ of $\sigma$; for all $\left(x', t'\right)$ and almost all $\omega$, the function $\left(\omega', z' \right) \longrightarrow \sigma_{i}\left(x', \omega', z', u(x, t, \omega, z)  \right)$ is  measurable on $\Lambda_{\omega'}\times \mathbb{R}_{z'}^{3},$ with $$\big\|\sigma_{i}\big(x',.,.,u(x,t,\omega,z)\big)\big\|_{L^{\infty}\left(\Lambda_{\omega'} \times \mathbb{R}_{z'}^{3}\right)} \leq c_{1}||u(x,t,.,z)||_{L^{\infty}\left(\Lambda_{\omega'}\times \mathbb{R}_{z'}^{3}  \right)}<\infty.$$ 
Then for all $\big(x,t,z,x',t'  \big) \in \overline{Q}\times \mathbb{R}^{3}\times\overline{Q}$ for almost all $\big( \omega', z' \big) \in \Lambda_{\omega'} \times \mathbb{R}^{3}_{z'}$, the function $\omega \longrightarrow \sigma_{i}\big( x', \omega',z',u(x,t,\omega,z) \big) \in \mathcal{C}^{\infty}\left( \Lambda_{\omega}\right).$ For all $(x, t)$ and for almost all $\omega$ as $u(x, t, \omega, .)\in \mathcal{C}^{\infty}_{per}(Z)$ property $(iv)$ of $\sigma$ leads to $\big(x,t,\omega,z, x',t',\omega',z' \big) \longrightarrow \sigma_{i}\big(x',\omega', z', u(x,t,\omega,z) \big)$ from $\overline{Q}\times \Lambda \times \mathbb{R}^{3}\times \overline{Q}\times\Lambda\times \mathbb{R}^{3}$ to $\mathbb{R}$ is an element of $\mathcal{C}\big( \overline{Q}\times \overline{Q}\times \mathbb{R}^{3}_{z}; \mathcal{C}^{\infty}\big(\Lambda; L^{\infty}\big( \Lambda\times \mathbb{R}^{3}_{z'} \big)\big)\big)$. Fixing 
$\big( x, t, x', t' \big)$;
$$\big(\omega, z,\omega',z' \big) \longrightarrow \sigma_{i}\big(x',\omega', z', u(x,t,\omega,z) \big) \in \mathcal{C}^{\infty}\big(\mathbb{R}^{3}_{z};
\mathcal{C}^{\infty}\big(\Lambda; L^{\infty}\big(\Lambda\times \mathbb{R}^{3}_{z'}\big)\big)\big)$$
and the trace are defined naturally by $(\omega, z)\longrightarrow \sigma_{i}\big(x', \omega, z, u(x,t,\omega,z) \big) \equiv \sigma_{i}(x',.,., u(x,t,.,.)) \in L^{\infty}\big(\Lambda \times\mathbb{R}^{3}_{z}\big)$.
$$\big(x,t,x',t'\big) \longrightarrow \sigma_{i}\left(x',.,.,u(x,t,.,.) \right)\in \mathcal{C}\big(\overline{Q}\times\overline{Q}; L^{\infty}\big( \Lambda \times \mathbb{R}^{3}_{z}\big)\big)$$
and the trace $(x,t)\longrightarrow \sigma_{i}(x,.,.,u(x,t,.,.)) \in \mathcal{C}\big(\overline{Q};L^{\infty}\big(\Lambda \times \mathbb{R}^{3}_{z}\big) \big)$ is well defined.\\ Therefore  $\sigma^{\varepsilon}_{i}\left(-, u^{\varepsilon} \right): Q\times \Lambda \longrightarrow \mathbb{R},\; (x, t, \omega) \longrightarrow \sigma_{i}\big(x, \hbox{\LARGE{$\tau$}}\left(\frac{x}{\varepsilon}\right)\omega, \frac{x}{\varepsilon^{2}}, u\left(x, t, \hbox{\LARGE{$\tau$}}\left(\frac{x}{\varepsilon}\right)\omega, \frac{x}{\varepsilon^{2}}\right)\big)$ is meaningful as element of $L^{\infty}\left( Q\times \Lambda \right)$ and due to (\ref{6}) we have:
\begin{equation}
\begin{array}{l}
%\big\|\sigma^{\varepsilon}_{i}(-, u^{\varepsilon})-\sigma^{\varepsilon}_{i}(-, v^{\varepsilon})\big\|_{L^{2}(Q\times \Lambda)}\leq \big\|u^{\varepsilon}- v^{\varepsilon}\big\|_{\mathbb{L}^{\infty}(Q\times \Lambda)} \leq\|u-v\|_{\mathcal{C}\big(\overline{Q}; L^{2}\left(\Lambda; L^{\infty}_{per}(Z)\right)\big)^{3}}; 
%\\
\big\|\sigma^{\varepsilon}_{i}(-, u^{\varepsilon})-\sigma^{\varepsilon}_{i}(-, v^{\varepsilon})\big\|_{L^{2}(Q\times \Lambda)}\leq \big\|u^{\varepsilon}- v^{\varepsilon}\big\|_{\mathbb{L}^{\infty}(Q\times \Lambda)} \leq\|u-v\|_{\mathcal{C}\big(\overline{Q}; L^{\infty}\left(\Lambda; L^{\infty}_{per}(Z)\right)\big)^{3}}.
\end{array}
\end{equation}
Hence $\big[\mathcal{C}\big(\overline{Q}\big) \otimes \mathcal{C}^{\infty}(\Lambda)\otimes \mathcal{C}^{\infty}_{per}(Z)\big]^{3} \longrightarrow L^{\infty}\big(Q\times \Lambda\big), u\longrightarrow \sigma^{\varepsilon}_{i}\left(-, u^{\varepsilon} \right) $ extends by continuity to a function still denoted with same notation from $\mathcal{C}\big(\overline{Q}; L^{2}\big(\Lambda; L^{\infty}_{per}(Z)\big)\big)$ to $L^{2}\left(Q\times \Lambda \right)$.
\par \textbf{Second case:} $u\in \big[\mathcal{C}\big(\overline{Q}\otimes \mathcal{C}^{\infty}(\Lambda)\big)\big]^{3}$.
\par Proceed as in the first case and realize that the function $\big(x, t, \omega, x', t', \omega', z  \big) \longrightarrow \sigma_{i}\big(x', \omega', z, u(x, t, \omega) \big)$ from $\overline{Q}\times \Lambda\times \overline{Q}\times \Lambda\times \mathbb{R}^{3}$ to $\mathbb{R}$ is an element of\\ $\mathcal{C}\big(\overline{Q}\;\times\; \overline{Q}; \mathcal{C}^{\infty}\big(\Lambda;\; L^{\infty}\big(\Lambda\times \mathbb{R}^{3}\big)\big)  \big)$. Fixing 
$\big(x, t, x', t'  \big); \big(\omega, \omega', z  \big)\longrightarrow \sigma_{i}\big(x', \omega', z, u(x,t,\omega) \big) \in \mathcal{C}^{\infty}\big(\Lambda; L^{\infty}\big(\Lambda\times \mathbb{R}^{3}\big)\big)$ and we have the trace $(\omega, z) \longrightarrow \sigma_{i}\big(x', \omega, z, u(x, t, \omega)  \big) \equiv \sigma_{i}\big(x',.,.,u(x,t,.) \big)\in L^{\infty}\left(\Lambda \times \mathbb{R}^{3}  \right).$ We also have $\big(x, t, x', t'  \big) \longrightarrow \sigma_{i}\big(x',.,.,u(x,t,.)\big) \in \mathcal{C}\big(\overline{Q}\; \times \; \overline{Q}; \; L^{\infty}\big(\Lambda \times \mathbb{R}^{3}\big)  \big)$ and naturally define the trace $(x, t)\longrightarrow \sigma_{i}(x,.,.,u(x,t,.)) \equiv \sigma_{i}\left(-,u\right)\in \mathcal{C}\big(\overline{Q}; \; L^{\infty}\big(\Lambda \times \mathbb{R}^{3}  \big)  \big)$. Then $\sigma_{i}^{\varepsilon}\left(-, u \right):Q\times \Lambda \longrightarrow \mathbb{R}, \left(x, t, \omega \right) \longrightarrow \sigma_{i}\big(x, \hbox{\LARGE{$\tau$}}\left(\frac{x}{\varepsilon}\right)\omega, \frac{x}{\varepsilon^{2}}, u(x, t, \omega) \big)$ is defined as an element of $L^{\infty}\left(Q\times \Lambda \right)$. We deduce from (\ref{6}) that 
$$\big\|\sigma^{\varepsilon}_{i}(-, u)-\sigma^{\varepsilon}_{i}(-,v) \big\|_{L^{2}(Q\times \Lambda)}\leq c_{1}\|u-v\|_{\mathbb{L}^{2}(Q\times \Lambda)}, \hbox{ where } \mathbb{L}^{2}(\Omega\times \Lambda) =L^{2}(\Omega\times \Lambda)^{3}.$$
Therefore, 
$$\big[\mathcal{C}\big(\overline{Q}\big)\otimes \mathcal{C}^{\infty}(\Lambda)\big]^{3}\longrightarrow L^{\infty}(Q\times \Lambda),\; u\longrightarrow \sigma^{\varepsilon}_{i}(-,\; u)$$
extends by continuity to a function still denoted the same way from $\mathbb{L}^{2}\left( Q\times \Lambda\right)$ to $L^{2}\left( Q\times \Lambda\right)$. In what follows we set $\mathcal{M}^{\varepsilon}_{1}(u)=\sigma^{\varepsilon}(-, u)$ for $u\in \mathbb{L}^{2}\left(Q\times\Lambda \right)$ and $\mathcal{M}^{\varepsilon}(U)=\left(\mathcal{M}^{\varepsilon}_{1}(u_{1}), 0  \right)$ for $U=(u_{1},\; u_{2}) \in \mathbb{L}^{2}\left(Q\times \Lambda\right)^{2}$
\par We also have the following result:
\begin{lemma}\label{l3.2}
Let $\varphi \in \mathcal{D}(Q)^{3},\; \phi \in \mathcal{D}\left(\Omega\times ]0,\; T[; \mathcal{C}^{\infty}\left(\Lambda\right)\right),\; \psi \in \mathcal{D}\big(\Omega\times ]0,\; T[; \big(\mathcal{C}^{\infty}(\Lambda); \mathcal{C}^{\infty}_{per}(Z)  \big)  \big), \; \\ \theta \in \mathcal{C}\big(\overline{Q}; \big(\mathcal{C}^{\infty}(\Lambda);\; \mathcal{C}^{\infty}_{per}(Z)  \big)  \big)^{3}$, define the test function $v^{\varepsilon}=(v^{\varepsilon}_{i}),\; 1\leq i\leq 3$ by:
$$v^{\varepsilon}_{i}=\varphi_{i}+\varepsilon \partial_{x_{i}}(\phi)^{\varepsilon}+ \varepsilon^{2}\partial_{x_{i}}(\psi)^{\varepsilon}+ r(\theta_{i})^{\varepsilon};$$
where $r$ is a non null real. We have: $\sigma_{i}(.,.,.,v(.,.,.,.)) \in \mathcal{C}\big(\overline{Q}; \; L^{2}\big(\Lambda; L^{\infty}_{per}(Z)\big)\big)$ with 
$$\sigma^{\varepsilon}_{i}(.,.,.,v^{\varepsilon}(.,.,.,.))\longrightarrow \sigma_{i}\left(.,.,.,\varphi_{i}+\partial^{\omega}_{i}(\phi)+\partial_{z_{i}}(\psi)+r(\theta_{i} \right)\; in \; L^{2}(Q\times \Lambda\times Z)-strong.$$
\end{lemma}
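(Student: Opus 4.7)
The strategy is to recast $v^\varepsilon$ as the $\varepsilon$-realization of a four-variable function that converges uniformly to the target expression, then propagate this convergence through $\sigma$ using its Lipschitz property (\ref{6}). Using property (iv) of dynamical systems (the chain rule $\partial_{y_i}[f(\hbox{\LARGE$\tau$}(y)\omega)] = (D_i^\omega f)(\hbox{\LARGE$\tau$}(y)\omega)$), a direct computation yields
\[\varepsilon\,\partial_{x_i}(\phi)^\varepsilon = \varepsilon(\partial_{x_i}\phi)^\varepsilon + (D_i^\omega\phi)^\varepsilon,\qquad \varepsilon^2\partial_{x_i}(\psi)^\varepsilon = \varepsilon^2(\partial_{x_i}\psi)^\varepsilon + \varepsilon(D_i^\omega\psi)^\varepsilon + (\partial_{z_i}\psi)^\varepsilon,\]
so that $v^\varepsilon_i = (w_i^{(\varepsilon)})^\varepsilon$ with
\[w_i^{(\varepsilon)}(x,t,\omega,z) := \varphi_i(x,t) + \varepsilon\,\partial_{x_i}\phi + D_i^\omega\phi + \varepsilon^2\partial_{x_i}\psi + \varepsilon\,D_i^\omega\psi + \partial_{z_i}\psi + r\theta_i.\]
All summands lie in $\mathcal{C}(\overline Q; L^\infty(\Lambda; L^\infty_{per}(Z)))$ because $\phi$, $\psi$, $\theta$ are $\mathcal C^\infty$ in $\omega$ and smooth-periodic in $z$. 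As $\varepsilon\to 0$, $w^{(\varepsilon)}_i$ converges in this norm to $w^0_i := \varphi_i + D_i^\omega\phi + \partial_{z_i}\psi + r\theta_i$ at rate $O(\varepsilon)$, since each discarded term carries an explicit positive power of $\varepsilon$ against a bounded quantity.

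Since $w^0 \in \mathcal{C}(\overline Q; L^\infty(\Lambda; L^\infty_{per}(Z)))^3 \subset \mathcal{C}(\overline Q; L^2(\Lambda; L^\infty_{per}(Z)))^3$, the first-case trace construction of the preceding subsection applies verbatim to $w^0$ and delivers $\sigma_i(\cdot,\cdot,\cdot,w^0) \in \mathcal{C}(\overline Q; L^2(\Lambda; L^\infty_{per}(Z)))$, which is the regularity claim and also certifies $\sigma_i(\cdot,\cdot,\cdot,w^0)$ as an admissible function for stoch-2s convergence.

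For the convergence part I decompose
\[\sigma^\varepsilon_i(\cdot,v^\varepsilon) - \sigma_i(\cdot,w^0) = \bigl[\sigma^\varepsilon_i(\cdot,v^\varepsilon) - (\sigma_i(\cdot,w^0))^\varepsilon\bigr] + \bigl[(\sigma_i(\cdot,w^0))^\varepsilon - \sigma_i(\cdot,w^0)\bigr].\]
The pointwise Lipschitz estimate (\ref{6}) bounds the first bracket by $c_1|(w^{(\varepsilon)}_i - w^0_i)^\varepsilon|$, which is uniformly $O(\varepsilon)$ on $Q\times\Lambda$ by the first step, hence vanishes in $L^2(Q\times\Lambda)$. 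The second bracket vanishes in the strong stoch-2s sense by the standard admissibility principle: the $\varepsilon$-realization of any admissible element of $\mathcal{C}(\overline Q; L^2(\Lambda; L^\infty_{per}(Z)))$ is strongly stoch-2s convergent, in $L^2(Q\times\Lambda\times Z)$, to the function itself. Summing the two estimates yields the claimed strong convergence.

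\textbf{Main obstacle.} The delicate point is reconciling the $z$-independent left-hand side $\sigma^\varepsilon_i(\cdot,v^\varepsilon)$ with the genuinely $z$-dependent limit in the norm of $L^2(Q\times\Lambda\times Z)$: the only workable reading is strong stoch-2s convergence, and once this is adopted the argument reduces to the Lipschitz transport of the uniform convergence $w^{(\varepsilon)}\to w^0$ through $\sigma$, plus admissibility of $\sigma_i(\cdot,w^0)$ from the first-case trace construction.
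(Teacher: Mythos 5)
Your proof is correct and follows essentially the same route as the paper's: the chain-rule expansion of $\partial_{x_{i}}(\phi)^{\varepsilon}$ and $\partial_{x_{i}}(\psi)^{\varepsilon}$, multiplication by $\varepsilon$ and $\varepsilon^{2}$ to isolate the surviving terms $\partial^{\omega}_{i}\phi$ and $\partial_{z_{i}}\psi$, and the Lipschitz bound (\ref{6}) to transport the convergence through $\sigma_{i}$. The paper compresses the final step into the single sentence ``From condition (\ref{6}) we get\dots''; your explicit decomposition into a Lipschitz-controlled $O(\varepsilon)$ term plus the strong stoch-2s convergence of the $\varepsilon$-realization of the admissible limit function is precisely the content that sentence leaves implicit, and your reading of the $L^{2}(Q\times\Lambda\times Z)$-strong convergence as strong stoch-2s convergence is the intended one.
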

\begin{proof}
Because,
\begin{equation}\nonumber
\begin{array}{l}
\partial_{x_{i}}(\phi)^{\varepsilon}(x, \omega, t)=\partial_{x_{i}}\phi\left(x, \hbox{\LARGE{$\tau$}}\left(\frac{x}{\varepsilon} \right)\omega, t \right)+ \frac{1}{\varepsilon}\partial^{\omega}_{i}\phi\left(x, \hbox{\LARGE{$\tau$}}\left(\frac{x}{\varepsilon} \right)\omega, t \right);\\
\partial_{x_{i}}(\psi)^{\varepsilon}(x, \omega, t)=\partial_{x_{i}}\psi\left(x, \hbox{\LARGE{$\tau$}}\left(\frac{x}{\varepsilon} \right)\omega,\frac{x}{\varepsilon^{2}}, t \right)+ \frac{1}{\varepsilon}\partial^{\omega}_{i}\psi\left(x, \hbox{\LARGE{$\tau$}}\left(\frac{x}{\varepsilon} \right)\omega,\frac{x}{\varepsilon^{2}}, t \right)+\frac{1}{\varepsilon^{2}}\partial_{z_{i}}\psi\left(x, \hbox{\LARGE{$\tau$}}\left(\frac{x}{\varepsilon}\right)\omega, \frac{x}{\varepsilon^{2}}, t \right);
\end{array}
\end{equation}
then
\begin{equation}\nonumber
\begin{array}{l}
\varepsilon\partial_{x_{i}}(\phi)^{\varepsilon}(x, \omega, t)=\varepsilon\partial_{x_{i}}\phi\left(x, \hbox{\LARGE{$\tau$}}\left(\frac{x}{\varepsilon}\right)\omega, t \right)+ \partial^{\varepsilon}_{i}\phi\left(x, \hbox{\LARGE{$\tau$}}\left(\frac{x}{\varepsilon}\right)\omega, t\right); \\
\varepsilon^{2}\partial_{x_{i}}(\psi)^{\varepsilon}(x, \omega, t)=\varepsilon^{2}\partial_{x_{i}}\psi\left(x, \hbox{\LARGE{$\tau$}}\left(\frac{x}{\varepsilon} \right)\omega,\frac{x}{\varepsilon^{2}}, t \right)+ \varepsilon\partial^{\omega}_{i}\psi\left(x, \hbox{\LARGE{$\tau$}}\left(\frac{x}{\varepsilon} \right)\omega,\frac{x}{\varepsilon^{2}}, t \right)+\partial_{z_{i}}\psi\left(x, \hbox{\LARGE{$\tau$}}\left(\frac{x}{\varepsilon}\right)\omega, \frac{x}{\varepsilon^{2}}, t \right).
\end{array}
\end{equation}
From condition (\ref{6}) we get as $\varepsilon \longrightarrow 0$,
$$\sigma^{\varepsilon}_{i}(., ., .,v^{\varepsilon}(., ., ., .))\longrightarrow \sigma_{i}\left(., ., ., \varphi_{i}+\partial^{\omega}_{i}(\phi)+\partial_{z_{i}}(\psi)+r(\theta_{i})\right)$$ in $L^{2}\left(Q\times \Lambda \times Z  \right)-$strong.
\end{proof}
\subsection{\textbf{A priori estimates.}} We recall some notations and facts and refer to \cite{17} for details. Set $U^{\varepsilon}=\{E^{\varepsilon}, \; H^{\varepsilon}\}$, $G^{\varepsilon}=\{F^{\varepsilon},\; 0\}$, define the following:
$$\mathcal{N}^{\varepsilon}: \mathbb{L}^{2}\left(\Omega \times \Lambda  \right)^{2}\longrightarrow \mathbb{L}^{2}\left(\Omega \times \Lambda \right)^{2}, \; \Phi=\{\varphi, \theta\}\mapsto \mathcal{N}^{\varepsilon}\Phi=\{\eta^{\varepsilon}\varphi, \mu^{\varepsilon}\theta\}.$$
Set 
\begin{equation}\nonumber
\begin{array}{l}
\mathcal{A}\Phi=(-\mathrm{curl}\; \phi,\; \mathrm{curl}\;\varphi) \hbox{ for all } \Phi=(\varphi, \phi) \in \mathcal{D}(\mathcal{A}), \hbox{ with}\\
\mathcal{D}(\mathcal{A})=\bigg\{(\varphi, \phi) \in \mathbb{L}^{2}\left(\Omega\times \Lambda \right)^{2}; (\mathrm{curl}\; \varphi,\; \mathrm{curl} \;\phi)\in \mathbb{L}^{2}\left( \Omega\times \Lambda\right)^{2} \hbox{ and }  \gamma \wedge \varphi=O\; on\; \partial\Omega\times \Lambda  \bigg\}.
\end{array}
\end{equation}
The first two equations of (\ref{1}) imply the following variational formulation
\begin{equation}\nonumber
\begin{array}{l}
\big[\mathcal{N}^{\varepsilon}\partial_{t}U_{\varepsilon}(t), \; V \big]+ \big[\mathcal{A}U_{\varepsilon}(t), \; V\big]+ \big[\mathcal{M}^{\varepsilon}U_{\varepsilon}(t),\; V \big]= \big[G_{\varepsilon}(t),\; V\big],\; 0<t<T, \\
\hbox{ for all } V\in \mathcal{D}(\mathcal{A}), \hbox{ where } \mathcal{N}^{\varepsilon}, \; \mathcal{A}, \; G_{\varepsilon} \hbox{ are  defined as above; see also \cite{17}} \\
\hbox{ and } \mathcal{M}^{\varepsilon} \hbox{ is as defined in  the  previous section.}
\end{array}
\end{equation}
\par Taking $V=U_{\varepsilon}(t),\; 0<t<T,$ we get from $\mathcal{N}^{\varepsilon}, \; \mathcal{A}$ and $\mathcal{M}^{\varepsilon}$ properties that:
$$\frac{1}{2}\partial_{t}\left(E_{\varepsilon}(t),\; D_{\varepsilon}(t)\right)+\frac{1}{2}\partial_{t}\left(H_{\varepsilon}(t),\; B_{\varepsilon}(t)\right) +\delta \|E_{\varepsilon}(t)\|^{2}_{L^{2}(\Omega \times \Lambda)^{3}}\leq \big[G_{\varepsilon}(t),\; U_{\varepsilon}(t)\big].$$
We integrate from $0$ to $t$ and we obtain the following estimation
\begin{equation}\nonumber
\begin{array}{l}
\frac{1}{2}\left(E_{\varepsilon}(t),\; D_{\varepsilon}(t)\right) +\frac{1}{2}\left(H_{\varepsilon}(t), \; B_{\varepsilon}(t)\right) + \delta\int_{0}^{t}\|E_{\varepsilon}(s)\|^{2}_{L^{2}(\Omega\times \Lambda)^{3}}ds \leq \\
\sqrt{\frac{2}{\delta}}\bigg(\int^{T}_{0}\|F_{\varepsilon}(t)\|^{2}_{L^{2}(\Omega\times \Lambda)^{3}}dt  \bigg)^{\frac{1}{2}} \times \sqrt{\frac{\delta}{2}}\bigg(\int^{T}_{0} \|E_{\varepsilon}(t)\|^{2}_{L^{2}(\Omega\times \Lambda)^{3}}dt  \bigg)^{\frac{1}{2}}\\ + \frac{1}{2}\left(E_{0}^{\varepsilon}(t),\; D^{\varepsilon}_{0}(t) \right)+ \frac{1}{2}\left( H^{\varepsilon}_{0}(t), \; B^{\varepsilon}_{0}(t)\right). 
\end{array}
\end{equation}
Therefore
\begin{equation}\nonumber
\begin{array}{l}
\frac{1}{2}\left(E_{\varepsilon}(t),\; D_{\varepsilon}(t)\right) +\frac{1}{2}\left(H_{\varepsilon}(t), \; B_{\varepsilon}(t)\right) +\delta\int_{0}^{t}\|E_{\varepsilon}(s)\|^{2}_{L^{2}(\Omega\times \Lambda)^{3}}ds \leq \\
\frac{2}{\delta}\int_{0}^{T}\|F_{\varepsilon}(t)\|^{2}_{L^{2}(\Omega\times \Lambda)^{3}}dt +\frac{1}{2}\left(E^{\varepsilon}_{0}(t),\; D_{0}^{\varepsilon}(t)\right) +\frac{1}{2}\left(H_{0}^{\varepsilon}(t),\; B_{0}^{\varepsilon}(t)\right)
\end{array}
\end{equation}
So
\begin{equation}\label{19}
\begin{array}{l}
\left(E_{\varepsilon}(t),\; D_{\varepsilon}(t)\right) +\left(H_{\varepsilon}(t), \; B_{\varepsilon}(t)\right) + \delta\int_{0}^{t}\|E_{\varepsilon}(s)\|^{2}_{L^{2}(\Omega\times \Lambda)^{3}}ds \leq \\
\frac{4}{\delta}\int_{0}^{T}\|F_{\varepsilon}(t)\|^{2}_{L^{2}(\Omega\times \Lambda)^{3}}dt +\left(E^{\varepsilon}_{0}(t),\; D_{0}^{\varepsilon}(t)\right) +\left(H_{0}^{\varepsilon}(t),\; B_{0}^{\varepsilon}(t)\right)
\end{array}
\end{equation}
\begin{remark}
Inequality (\ref{19}) suggests we can find a solution of (\ref{1}) as a distribution $U_{\varepsilon} \in \mathcal{D}'\left(]0,\; T[,\; \mathcal{D}(\mathcal{A})  \right)$ such that $U_{\varepsilon}\in L^{\infty} \big(0, T;\; \mathbb{L}^{2}\left(\Omega\times \Lambda\right)^{2}  \big)$
\end{remark}
\par As another consequence of estimation (\ref{19}) one has the following lemma:
\begin{lemma}\label{l3.3}
From hypothesis  on $F_{\varepsilon}$, $E^{0}_{\varepsilon}$, $H^{0}_{\varepsilon}$ (see Section 1), it results from estimation (\ref{19}) that
\begin{equation}\nonumber
\begin{array}{l}
\|U^{\varepsilon}\|_{L^{2}\left(0, T; H_{\mathrm{curl}}(\Omega; L^{2}(\Lambda))^{2}\right)} \leq C,\\
\|U^{\varepsilon}\|_{L^{\infty}\left(0, T; \mathbb{L}^{2}(\Omega\times \Lambda)^{2}  \right)} \leq C,\\
\|\mathcal{M}^{\varepsilon}\left(U^{\varepsilon} \right)\|_{L^{2}\left(0, T ; \mathbb{L}^{2}(\Omega\times \Lambda)^{2} \right)} \leq C.
\end{array}
\end{equation}
\end{lemma}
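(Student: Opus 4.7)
The $L^{\infty}(0,T;\mathbb{L}^{2})^{2}$ bound is the first to pull out of (\ref{19}). Apply the coercivity assumption on $\eta$ and $\mu$ to the left-hand side: $(E_{\varepsilon}(t),D_{\varepsilon}(t))\geq c_{2}\|E_{\varepsilon}(t)\|_{\mathbb{L}^{2}}^{2}$ and $(H_{\varepsilon}(t),B_{\varepsilon}(t))\geq c_{2}\|H_{\varepsilon}(t)\|_{\mathbb{L}^{2}}^{2}$; on the right, the strong convergence of $F_{\varepsilon}$ in $H_{\mathrm{div}}(Q;L^{2}(\Lambda))$ bounds $\int_{0}^{T}\|F_{\varepsilon}\|^{2}\,dt$, while the strong convergence of the initial data in $H_{\mathrm{curl}}(\Omega;L^{2}(\Lambda))$ combined with the upper bound on $\eta,\mu$ controls $(E_{0}^{\varepsilon},D_{0}^{\varepsilon})+(H_{0}^{\varepsilon},B_{0}^{\varepsilon})$. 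Taking the supremum in $t$ produces the second estimate. The bound on $\mathcal{M}^{\varepsilon}(U^{\varepsilon})$ is then immediate: hypotheses (iii) and (iv) on $\sigma$ yield $|\sigma^{\varepsilon}(\cdot,E^{\varepsilon})|\leq c_{1}|E^{\varepsilon}|$, so
\[\|\mathcal{M}^{\varepsilon}(U^{\varepsilon})\|_{L^{2}(0,T;\mathbb{L}^{2})^{2}}^{2}\leq c_{1}^{2}\int_{0}^{T}\|E^{\varepsilon}(t)\|_{\mathbb{L}^{2}}^{2}\,dt,\]
and the right-hand side is already controlled by the $\delta$-term in (\ref{19}).

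For the $L^{2}(0,T;H_{\mathrm{curl}})^{2}$ bound, the first two equations of (\ref{1}) together with $D_{\varepsilon}=\eta^{\varepsilon}E_{\varepsilon}$, $B_{\varepsilon}=\mu^{\varepsilon}H_{\varepsilon}$ (with $\eta^{\varepsilon},\mu^{\varepsilon}$ time-independent) give
\[\mathrm{curl}\,H^{\varepsilon}=\eta^{\varepsilon}\partial_{t}E^{\varepsilon}+\sigma^{\varepsilon}(\cdot,E^{\varepsilon})-F^{\varepsilon},\qquad \mathrm{curl}\,E^{\varepsilon}=-\mu^{\varepsilon}\partial_{t}H^{\varepsilon},\]
so it suffices to bound $\partial_{t}U^{\varepsilon}$ in $L^{\infty}(0,T;\mathbb{L}^{2})^{2}$. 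I would differentiate the variational formulation in $t$, set $V^{\varepsilon}=\partial_{t}U^{\varepsilon}$, and test with $V^{\varepsilon}$: the symmetry of $\eta,\mu$ converts the leading term into $\tfrac{1}{2}\partial_{t}[\mathcal{N}^{\varepsilon}V^{\varepsilon},V^{\varepsilon}]$, skew-adjointness of $\mathcal{A}$ on $\mathcal{D}(\mathcal{A})$ kills the curl--curl cross term, and hypothesis (\ref{8}) guarantees $[\partial_{t}\mathcal{M}^{\varepsilon}U^{\varepsilon},V^{\varepsilon}]=(D_{\xi}\sigma^{\varepsilon}(\cdot,E^{\varepsilon})\partial_{t}E^{\varepsilon},\partial_{t}E^{\varepsilon})\geq0$, so this contribution can be discarded. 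The forcing $[\partial_{t}G^{\varepsilon},V^{\varepsilon}]=(\partial_{t}F_{\varepsilon},\partial_{t}E^{\varepsilon})$ is absorbed via Young's inequality using $\partial_{t}F_{\varepsilon}\in L^{\infty}(0,T;\mathbb{L}^{2})$. The initial value $V^{\varepsilon}(0)$ is read off (\ref{1}) at $t=0$: $\partial_{t}H_{\varepsilon}(0)=-(\mu^{\varepsilon})^{-1}\mathrm{curl}\,E_{\varepsilon}^{0}$ and $\partial_{t}E_{\varepsilon}(0)=(\eta^{\varepsilon})^{-1}(\mathrm{curl}\,H_{\varepsilon}^{0}+F_{\varepsilon}(0)-J_{\varepsilon}(E_{\varepsilon}^{0}))$, both $\mathbb{L}^{2}$-bounded by coercivity of $\eta,\mu$ and by the $H_{\mathrm{curl}}$-convergence of the initial data. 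A Gronwall argument yields $\|V^{\varepsilon}\|_{L^{\infty}(0,T;\mathbb{L}^{2})}\leq C$, and substituting back into the curl identities closes the first estimate.

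The main technical obstacle lies in this time-differentiated energy step: one needs $\partial_{t}U^{\varepsilon}\in\mathcal{D}(\mathcal{A})$ (the boundary condition $\gamma\wedge E^{\varepsilon}=0$ is $t$-invariant, so this is formal) and the chain rule for $\partial_{t}[\mathcal{N}^{\varepsilon}V^{\varepsilon},V^{\varepsilon}]$ at the low regularity provided by Remark 3.1. The standard resolution is to carry out the argument on a Galerkin approximation, where $V^{\varepsilon}$ is smooth in $t$ and all integrations by parts and applications of (\ref{8}) are pointwise valid, and then to pass to the limit using lower semicontinuity together with the $\partial_{t}^{2}F_{\varepsilon}\in L^{2}(0,T;\mathbb{L}^{2})$ hypothesis, which supplies just enough regularity for $\partial_{t}U^{\varepsilon}$ to exist as a limit in the right space.
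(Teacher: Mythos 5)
Your proposal is correct and follows essentially the same route as the paper: the $L^{\infty}(0,T;\mathbb{L}^{2})$ and $L^{2}$ bounds come from (\ref{19}) via the coercivity of $\eta,\mu$ and the hypotheses on $F_{\varepsilon}$ and the initial data, the bound on $\mathcal{M}^{\varepsilon}(U^{\varepsilon})$ from the Lipschitz property (\ref{6}) with $\sigma(\cdot,0)=0$, and the curl bounds from the time-differentiated energy identity, discarding the monotone term by (\ref{8}) and then reading the curls off the first two equations of (\ref{1}). Your closing remark about justifying the differentiated estimate at the Galerkin level is a reasonable way to make rigorous what the paper only sketches by reference to its earlier work.
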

\begin{proof}
Hypothesis on $F_{\varepsilon}$, $H^{0}_{\varepsilon}$, $H^{0}_{\varepsilon}$ and (\ref{19}) yield $\int^{T}_{0}\|E_{\varepsilon}(t)\|^{2}_{L^{2}(\Omega\times \Lambda)^{3}}dt \leq C$, \, $\|E^{\varepsilon}\|_{L^{\infty}\left(0, T; \mathbb{L}^{2}(\Omega\times \Lambda)  \right)} \leq C$, $\|H^{\varepsilon}\|_{L^{\infty}\left(0, T; \mathbb{L}^{2}(\Omega\times \Lambda)  \right)} \leq C$, ($C$ a constant independent from $\varepsilon$) and next  \\ 
$\|\mathcal{M}^{\varepsilon}\left(U^{\varepsilon} \right)\|_{L^{2}\left(0, T ; \mathbb{L}^{2}(\Omega\times \Lambda)^{2} \right)} \leq C$, 
(see \ref{6}). Proceeding as \cite{17} we get
\begin{equation}\nonumber
\begin{array}{l}
\partial_{t}\left(\mathcal{N}^{\varepsilon}\partial_{t}U_{\varepsilon}(t),\; \partial_{t}U_{\varepsilon}(t)  \right)+ \left( D_{\xi}\mathcal{M}^{\varepsilon}\left(U_{\varepsilon}(t)\right)\partial_{t}U_{\varepsilon}(t), \partial_{t}U_{\varepsilon}(t)\right)\\
=\left(\partial_{t}G^{\varepsilon}(t), \partial_{t}U_{\varepsilon}(t) \right).
\end{array}
\end{equation}
Because $\left(D_{\xi}\mathcal{M}^{\varepsilon}\left(U_{\varepsilon}(t)\right)\partial_{t}U_{\varepsilon}(t), \partial_{t}U_{\varepsilon}(t)\right) \geq 0$ (see \ref{8}) the following inequality is obtained after integration with respect to the variable $t$ and  integration by parts in the second member:
\begin{equation}\nonumber
\begin{array}{l}
\frac{1}{2}\left(\partial_{t}E_{\varepsilon}(T_{1}),\; \partial_{t}D_{\varepsilon}(T_{1}) \right) + \frac{1}{2}\left(\partial_{t}H_{\varepsilon}(T_{1}),\; \partial_{t}B_{\varepsilon}(T_{1})\right)\leq \\
\|\partial_{t}F^{\varepsilon}(T_{1})\|^{2}_{L^{2}(\Omega\times \Lambda)^{3}}\|E^{\varepsilon}(t)\|^{2}_{L^{2}(\Omega\times \Lambda)^{3}}+\|\partial_{t}F^{\varepsilon}(0)\|^{2}_{L^{2}(\Omega\times \Lambda)^{3}}\|E^{\varepsilon}(0)\|^{2}_{L^{2}(\Omega\times \Lambda)^{3}}\\
\bigg( \int^{T_{1}}_{0}\big\|\partial^{2}_{t}F^{\varepsilon}(t) \big\|^{2}_{L^{2}(\Omega\times \Lambda)^{3}}dt \bigg)^{\frac{1}{2}} \times \bigg( \int^{T_{1}}_{0}\big\|E_{\varepsilon}(t) \big\|^{2}_{L^{2}(\Omega\times \Lambda)^{3}}dt \bigg)^{\frac{1}{2}} \\
\frac{1}{2}\left(\partial_{t}E_{\varepsilon}(0),\; \partial_{t}D_{\varepsilon}(0) \right) + \frac{1}{2}\left(\partial_{t}H_{\varepsilon}(0),\; \partial_{t}B_{\varepsilon}(0)\right), \; \hbox{for all } T_{1}\in (0,\; T).
\end{array}
\end{equation}
The hypothesis on the source $F_{\varepsilon}$ and the strong convergence of $U_{\varepsilon}(0)$ leads to $\partial_{t}E_{\varepsilon}$ and $\partial_{t}H_{\varepsilon}$ are bounded in $L^{\infty}\big(0,T; \; L^{2}(\Omega\times \Lambda)^{3}\big)$. Taking in account the first two equations in (\ref{1}) we also get curl $E_{\varepsilon}$ bounded in $L^{\infty}\big(0, T; \; L^{2}(\Omega\times \Lambda)^{3}\big)$ and $\mathrm{curl }H_{\varepsilon}$ bounded
 in $L^{2}\big(0,T; \; L^{2}(\Omega\times \Lambda)^{3}\big)$; which ends the proof.
\end{proof}
\subsection{\textbf{Approximated problems.}} We intend to prove the existence of solution of problem (\ref{1}) by Faedo-galerkin method combined with Stone-Weierstrass theorem \cite{20, 28}. We use as notation [,] for both scalar product associated to $\mathcal{D}(\mathcal{A})$ norm and duality between $\mathcal{D}(\mathcal{A})$ and $\mathcal{D}(\mathcal{A})'$ and (,) is used for scalar product in $\mathbb{L}^{2}(\Omega\times \Lambda)$. We consider the spectral problem
$$[w_{i},\; v]=\lambda_{i}(w_{i},\; v) \hbox{ for all } v\in \mathcal{D}(\mathcal{A})$$
and set $V_{n}$ the subspace of dimension $n$ generated by $\{w_{1}, \dots , w_{n}\}$. For $\varepsilon > 0$ fixed, we define an approximated solution of order $n$, $n\in \mathbb{N}^{*}$, by:
\begin{equation}\label{3.4}
U_{\varepsilon_{n}}(t)=\sum^{n}_{i=1}\psi_{ni}(t)w_{i}
\end{equation}
\begin{equation}\label{3.5}
\left[\mathcal{N}^{\varepsilon}U'_{\varepsilon_{n}}(t),\; w\right] +\left[\mathcal{A}U_{\varepsilon_{n}}(t),\; w  \right] + \left[\mathcal{M}^{\varepsilon}
\left(U_{\varepsilon_{n}}(t),\; w  \right)  \right]=\left[G_{\varepsilon}(t),\; w \right], \; w\in V_{n}
\end{equation}
\begin{equation}\label{3.6}
U_{\varepsilon_{n}}(0)=\left(E^{0}_{\varepsilon_{n}},\; H^{0}_{\varepsilon_{n}}\right),\; \hbox{where } \left(E^{0}_{\varepsilon_{n}},\; H^{0}_{\varepsilon_{n}}\right)\in V_{n} \hbox{ is such that }
\end{equation}
\begin{equation}\label{3.7}
\left(\left(E^{0}_{\varepsilon_{n}},\; H^{0}_{\varepsilon_{n}}\right)\right) \longrightarrow \left(E^{0},\; H^{0}\right) \hbox{in } \mathcal{D}(\mathcal{A}) \hbox{ as } n\longrightarrow +\infty. 
\end{equation}
\par Assume the function $\psi_{ni}$ are continuous on $[0,\; T]$. Set $\psi_{n}=\left(\psi_{ni}\right)$.
\par \textbf{First case:} Suppose that $\mathcal{M}^{\varepsilon}\left(U_{\varepsilon_{n}}(\cdot)\right)=\left(\theta \circ \psi_{n}\right)\sigma$ where $\sigma\circ \psi_{n} \in \mathcal{C}\left([0,\; T]  \right)$ and $\sigma= \sigma\left(w_{1},..., w_{n}\right) \in \left(\mathcal{D}(\mathcal{A})\right)'$.
\par Set $K^{n}=\psi_{n}\left([0,\; T]\right)$, then we look at the function $\theta$ as an element of $\mathcal{C}(K^{n})$, $K^{n}$ being a compact subset of $\mathbb{R}^{n}$. By Stone-Weierstrass theorem (cf.\cite{7} p.37), for all $\beta >0$, there exists a polynomial in $\lambda \in \mathbb{R}^{n},\; p(\lambda)=a+b\cdot\lambda+\cdots , (a\in \mathbb{R} \; \hbox{and } b\in \mathbb{R}^{n})$, such that $|p(\lambda)-\theta(\lambda)|< \frac{\beta}{2}$, for all $\lambda \in K^{n}$. Let $Q(\lambda)=a+b\cdot\lambda$ (polynomial of degree $1$ extracted from $p$). There exists a compact and connected subset $K^{n}_{0} \subset K^{n}$ with $\psi_{n}(0)\in K^{n}_{0}$, such that $|p(\lambda)-Q(\lambda) |<\frac{\beta}{2} \hbox{ for all } \lambda \in K^{n}_{0}$; therefore  $|\theta(\lambda)-Q(\lambda)|<\beta \hbox{ for all } \lambda \in K^{n}_{0}$. Set $\psi_{n}\left([0,\; T] \right)=K^{n}_{0}$. Then for $\beta>0$ fixed and for all $t\in [0,\; \tau_{n}]$, we have $|\theta(\psi_{n}(t))-Q(\psi_{n}(t))|<\beta$. This suggests that one defines an approximated problem in $[0, \; \tau_{n}]$ of (\ref{3.4})-(\ref{3.6}) as follows:
\begin{equation}\label{3.8}
\begin{array}{l}
\sum \limits^{n}_{i=1}\left[\mathcal{N}^{\varepsilon}w_{i},\; w_{j} \right]\psi'_{ni}(t)+\sum\limits^{n}_{i=1}\left[\mathcal{A}^{\varepsilon}w_{i},\; w_{j}\right]\psi_{ni}(t) + \sum\limits^{n}_{i=1}b_{i}\left[\sigma,\; w_{j}\right]\psi_{ni}(t)\\
=\left[G_{\varepsilon}(t)\right]-a\left[\sigma,\; w_{j}\right], \;j=1,..., n
\end{array}
 \end{equation}
with initial condition
\begin{equation}\label{3.9}
\psi_{ni}(0)=\delta_{ni},\; 1\leq i\leq n
\end{equation}
where $U_{\varepsilon_{n}}(0)=\sum\limits^{n}_{i=1}\delta_{ni}w_{i}$. But the $w_{i}$ are eigen vectors, therefore the matrix $\left[\mathcal{N}^{\varepsilon}w_{i},\; w_{j}\right]$ is invertible, and (\ref{3.8})-(\ref{3.9}) become:
\begin{equation}\nonumber
\begin{array}{l}
\psi'_{n}(t)+M\psi_{n}(t)=G_{n}(t),\; 0<t< \tau\\
\psi_{n}(0)=\delta_{n}
\end{array}
\end{equation}
where $M$ is a  square matrix of order $n$ and $\delta_{n}=(\delta_{ni}),\; 1\leq i\leq n$.
\par $\star$ If $F_{\varepsilon}\in \mathcal{C}\left(\left[0,\; T\right];\; \mathbb{L}^{2}(\Omega\times \Lambda)\right)$, there exists $0<\tau_{n}\leq \tau$ 
 such that the differential system below has as unique solution $\psi_{n}: \left[0,\; \tau^{n}_{n}\right] \longrightarrow \mathbb{R}^{n}$, given by 
\begin{equation}\label{3.10} 
\psi_{n}(t)=e^{t}\delta_{n} + \int^{t}_{0} e^{(s-t)M}G_{n}(s)ds,\; \cite{13, 39};
\end{equation}
and it follows that $\psi_{n},\; \psi'_{n}$ belong to $\mathcal{C}\left(\left[0,\; \tau^{n}_{n}  \right]; \; \mathbb{R}^{n}\right)$. This being so, using (\ref{3.8}) and (\ref{3.4}), and setting\\
 $P_{n}(t,\; w)=\left[ \mathcal{N}^{\varepsilon}U'_{\varepsilon_{n}}(t),\; w \right] +\left[\mathcal{A}^{\varepsilon}U_{\varepsilon_{n}}(t), \;w \right] + \left[\mathcal{M}^{\varepsilon}\left(U_{\varepsilon_{n}}(t)\right), \; w\right]-\left[G_{\varepsilon}(t),\; w \right]$, we get:
 \begin{equation}\nonumber
\begin{array}{l}
|P_{n}(t,\; w)|= \\
\big|\left[ \mathcal{N}^{\varepsilon}U'_{\varepsilon_{n}}(t),\; w \right] +\left[\mathcal{A}^{\varepsilon}U_{\varepsilon_{n}}(t), \;w \right] + \left[\mathcal{M}^{\varepsilon}\left(U_{\varepsilon_{n}}(t)\right), \; w\right]-\left[G_{\varepsilon}(t),\; w \right]\big|=^{(\ref{3.8})}\\
\big|\left[ \mathcal{N}^{\varepsilon}U'_{\varepsilon_{n}}(t),\; w \right] +\left[\mathcal{A}^{\varepsilon}U_{\varepsilon_{n}}(t), \;w \right] + \left[\mathcal{M}^{\varepsilon}\left(U_{\varepsilon_{n}}(t)\right), \; w\right]\\
-\big(\sum\limits^{n}_{i=1}\left[\mathcal{N}^{\varepsilon}w_{i},\; w\right]\psi'_{ni}(t) +\sum\limits^{n}_{i=1}\left[\mathcal{A}^{\varepsilon}w_{i}, w  \right]\psi_{ni}(t)+\sum\limits^{n}_{i=1}b_{i}\left[\sigma,\; w  \right]\psi_{ni}(t)+a\left[\sigma,\; w  \right]\big) \big|\\
=\big| \mathcal{M}\left(U_{\varepsilon_{n}}(t)\right)\left[\sigma,\; w  \right] -\sum\limits^{n}_{i=1}b_{i}\left[\sigma, \; w \right]\psi_{ni}(t)-a\left[\sigma,\; w \right]  \big| \\
=\big|\theta\left(\psi_{n}(t)\right)\left[\sigma, \; w \right]-Q\left(\psi_{n}(t)\right)\left[\sigma,\; w\right]  \big| \\
\leq \beta\big| \left[\sigma,\; w\right] \big| \hbox{ for all } t\in \left[ 0,\; \tau^{n}_{n}\right] \hbox{ and all } w\in V_{n}.
\end{array}
\end{equation}
As $\beta$ is fixed arbitrarily, then the solution of (\ref{3.8})-(\ref{3.9}) imply the existence of a local solution in $\left[0,\; \tau^{n}_{n}\right]$ of (\ref{3.4})
-(\ref{3.6}). Combining a priori estimate to the local solution in $\left[\tau^{n}_{n},\; \tau^{n}_{n}+\delta \right],\; \delta>0,\; \tau^{n}_{n}+\delta \leq T$, where the unknown function $U_{\varepsilon_{n}}$ is replaced by $V_{\varepsilon_{n}}$, with initial condition $V_{\varepsilon_{n}}\left(\tau^{n}_{n}  \right)=U_{\varepsilon_{n}}\left(\tau^{n}_{n}\right)$, we replace the  interval $\left[0,\; \tau^{n}_{n} \right]$ by $\left[0,\; T\right]$. Consequently, (\ref{3.4})-(\ref{3.6}) have at least one global solution whenever $\mathcal{M}^{\varepsilon}\left(U_{\varepsilon_{n}}\left( \cdot\right)\right)$ is of 
the form $\left(\theta\circ \psi_{n}\right)$ and $F_{\varepsilon}\in \mathcal{C}\big(\left[0,\; T\right];\; \mathbb{L}^{2}(\Omega\times \Lambda) \big)$. 
\begin{lemma}\label{l3.4}
The problem (\ref{3.4})-(\ref{3.6}) with $\mathcal{M}^{\varepsilon}\left(U_{\varepsilon_{n}}\left(\cdot \right)\right)=\left(\theta \circ \psi_{n}\right)\sigma,\;\theta\circ \psi_{n}\in \mathcal{C}\left(\left[0,\; T\right] \right),\;\\ \sigma=\sigma(w_{1}, ..., w_{n}) \in \big( \mathcal{D}\left(\mathcal{A}\right) \big)' $,
and $F_{\varepsilon}\in \mathcal{C}\big(\left[0, T \right];\; \mathbb{L}^{2}(\Omega \times \Lambda) \big)$ admits at least one global solution in $\left[0, T  \right].$
\end{lemma}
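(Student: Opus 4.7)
The plan is to combine a finite-dimensional Faedo--Galerkin reduction with a Stone--Weierstrass linearization device and then close the argument using the a priori bound from Lemma~\ref{l3.3}.

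First I would substitute the ansatz (\ref{3.4}) into (\ref{3.5}) and test against each basis vector $w_j$, obtaining a first-order ODE system for the coefficient vector $\psi_n=(\psi_{ni})$. Because the $w_i$ are eigenvectors of the spectral problem, the mass matrix $\bigl[\mathcal{N}^\varepsilon w_i,w_j\bigr]$ is diagonal and invertible, so only the term $\theta\circ\psi_n$ obstructs a direct Cauchy--Lipschitz argument.

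To handle this nonlinearity, for an arbitrary $\beta>0$ I would view $\theta$ as an element of $\mathcal{C}(K^n)$ on a compact set $K^n\subset\mathbb{R}^n$ and invoke Stone--Weierstrass to find a polynomial $p$ with $|p-\theta|<\beta/2$; extracting its affine part $Q(\lambda)=a+b\cdot\lambda$, I would select a connected compact $K_0^n\subset K^n$ containing $\psi_n(0)$ so that $|\theta-Q|<\beta$ on $K_0^n$. Replacing $\theta$ by $Q$ produces the linear Galerkin system (\ref{3.8})--(\ref{3.9}), of the form $\psi_n'+M\psi_n=G_n$ with $G_n$ continuous (thanks to $F_\varepsilon\in\mathcal{C}([0,T];\mathbb{L}^2(\Omega\times\Lambda))$), which is solvable on a sub-interval $[0,\tau_n^n]$ by the variation-of-constants formula (\ref{3.10}). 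The residual computation sketched before the statement yields $|P_n(t,w)|\le\beta\,\bigl|[\sigma,w]\bigr|$ for every $w\in V_n$, and since $\beta>0$ is arbitrary one recovers a genuine local solution of the nonlinear system (\ref{3.4})--(\ref{3.6}) on $[0,\tau_n^n]$.

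To extend to all of $[0,T]$, I would apply the a priori bound (\ref{19}) at the Galerkin level (it uses only the coercivity of $\mathcal{N}^\varepsilon$, the monotonicity (\ref{7}) and the prescribed bound on $F_\varepsilon$), so that $U_{\varepsilon_n}(\tau_n^n)$ remains controlled by the data. This legitimises restarting the Stone--Weierstrass construction on $[\tau_n^n,\tau_n^n+\delta]$ with initial condition $V_{\varepsilon_n}(\tau_n^n)=U_{\varepsilon_n}(\tau_n^n)$; finitely many iterations then cover the whole interval $[0,T]$.

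The main obstacle I foresee is the Stone--Weierstrass step itself: the compacts $K^n$ and $K_0^n$ on which $\theta$ is approximated depend on the yet-unknown trajectory $\psi_n$, so the argument must be organised iteratively---solve the linear problem first, thereby fixing $K_0^n$ a posteriori, and only then verify that the residual bound genuinely holds on that set. Once this circular dependency is unwound, the passage from the linearized approximation to an actual nonlinear local solution, and its global extension via the a priori estimate, become routine.
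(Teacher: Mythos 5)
Your proposal follows essentially the same route as the paper's own argument (the ``First case'' discussion preceding the lemma): Stone--Weierstrass approximation of $\theta$ by an affine $Q$ on a compact $K_0^n$ containing $\psi_n(0)$, solution of the resulting linear system $\psi_n'+M\psi_n=G_n$ by variation of constants, the residual bound $|P_n(t,w)|\le\beta\,|[\sigma,w]|$ with $\beta$ arbitrary, and global extension via the a priori estimate (\ref{19}). The circular dependence of $K^n$ on the unknown trajectory that you flag is present in the paper as well (it simply assumes $\psi_{ni}$ continuous on $[0,T]$ and sets $K^n=\psi_n([0,T])$), so your treatment is, if anything, slightly more careful on that point.
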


\begin{remark}
If $\mathcal{M}^{\varepsilon}\left(U_{\varepsilon_{n}}\left(\cdot\right)\right)=\sum_{finite}\big(\theta^{l}\circ\psi_{n}\big)\sigma^{l}$ then one shows that there exists $0< \tau_{n}<T$, and a function $U_{\varepsilon_{n}}: \left[0,\; \tau_{n} \right] \longrightarrow V_{n}$ satisfying (\ref{3.4})-(\ref{3.6}). That solution leads to the existence of a global solution by proceeding as above, as the problem is linear.
\end{remark}
\par $\star\star$ We assume now that $F_{\varepsilon}\in L^{2}\big(\left[0,\;T  \right];\; \mathbb{L}^{2}(\Omega\times \Lambda)\big)$. Then there exists 
a sequence $(F_{\varepsilon p})\subset \mathcal{C}\big(\left[0,\; T\right]; \mathbb{L}^{2}(\Omega\times\Lambda)\big)$ converging to $F_{\varepsilon}$ in
 $L^{2}\big( \left[0,\; T  \right];\; \mathbb{L}^{2}(\Omega\times \Lambda)\big)$ when $p\longrightarrow +\infty$. For all $p\in\mathbb{N}^{*}$, there exists a sequence $\big(U_{\varepsilon_{n}p}\big)_{p\in\mathbb{N}^{*}} \subset L^{2}\big( \left[0,\;T  \right];\; \mathbb{L}^{2}(\Omega\times \Lambda)\big)$ such that 
\begin{equation}\label{3.11}
\left\{\begin{array}{l}
U_{\varepsilon_{n}p}\in V_{n}\\
\left[\mathcal{N}^{\varepsilon}U'_{\varepsilon_{n}p}(t),\; w\right] + \left[\mathcal{A}^{\varepsilon}U_{\varepsilon_{n}p}(t), w\right] +\left[\mathcal{M}^{\varepsilon}\left(U_{\varepsilon_{n}p}(t),\; w\right) \right] =\left[G_{\varepsilon p}(t),\; w\right],\\
0<t<T \hbox{ for all } w\in V_{n}
\end{array}\right.
\end{equation}
with 
\begin{equation}\label{3.12}
U_{\varepsilon_{n}p}(0)=U^{0}_{\varepsilon_{n}p}
\end{equation}
where 
\begin{equation}\label{3.13}
U^{0}_{\varepsilon_{n}p} \longrightarrow U^{0}_{\varepsilon_{n}} \hbox{ in } L^{2}\big(\left[0,\; T\right];\; \mathbb{L}^{2}(\Omega\times \Lambda)   \big) \hbox{ when } p\longrightarrow +\infty.
\end{equation}
From a priori estimate and monotony of $\mu^{\varepsilon},\; \eta^{\varepsilon}$ and $\sigma^{\varepsilon}, \hbox{ for } p,q \in \mathbb{N}^{*}$ we have:
\begin{equation}\nonumber
\begin{array}{l}
\big|E_{\varepsilon_{n}p}(t)-E_{\varepsilon_{n}q} \big|^{2}+\big|H_{\varepsilon_{n}p}-H_{\varepsilon_{n}q}\big|^{2} +\delta\int^{T}_{0}\|E_{\varepsilon_{n}p}(t)-E_{\varepsilon_{n}q}(t)\|^{2}dt \\
\leq c\bigg(\|F_{\varepsilon p}-F_{\varepsilon q}\|_{L^{2}(\Omega\times\Lambda)} +\big\|U^{0}_{\varepsilon_{n}p}-U^{0}_{\varepsilon_{n}q}\big\|_{L^{2}(\Omega\times\Lambda)}  \bigg).
\end{array}
\end{equation}
Therefore $\left(U_{\varepsilon_{n}p}\right)_{p\in\mathbb{N}^{*}}$ is a Cauchy sequence in $\mathcal{C}\big(\left[0,\; T\right];\; \mathbb{L}^{2}(\Omega\times\Lambda)^{2}\big)$  and $\left(E_{\varepsilon_{n}p}  \right)_{p\in \mathbb{N}^{*}}$ is a Cauchy type in $L^{2}\big(\left[0,\; T\right];\; \mathbb{L}^{2}(\Omega\times\Lambda)\big)$. There exists $U_{\varepsilon_{n}} \in \mathcal{C}\big(\left[0,\; T\right];\; \mathbb{L}^{2}(\Omega\times\Lambda)^{2}\big)$ such that $U_{\varepsilon_{n}p} \longrightarrow U_{\varepsilon_{n}} $ in $\mathcal{C}\big(\left[0,\; T\right];\; \mathbb{L}^{2}(\Omega\times\Lambda)^{2}\big)$ when $p\longrightarrow +\infty.$ Multiplying (\ref{3.11}) by $\varphi(t),\; \varphi \in \mathcal{D}\left( \left]0,\; T\right[ \right),\; 0<t<T$, and integrating by parts, then passing to the limits when $p\longrightarrow +\infty$ we get
\begin{equation}\nonumber
\begin{array}{l}
-\int^{T}_{0}\left[\mathcal{N}^{\varepsilon}U_{\varepsilon_{n}}(t), w\right]\varphi'(t)dt +\int^{T}_{0}\left[\mathcal{A}U_{\varepsilon_{n}}(t), w\right]\varphi(t)dt +\int^{T}_{0}\left[\mathcal{M}^{\varepsilon}\left(U_{\varepsilon_{n}}(t)\right), w \right]\varphi(t)dt\\
=\int^{T}_{0}\left[G_{\varepsilon}(t), w  \right]\varphi(t)dt.
\end{array}
\end{equation}
As $\varphi$ is arbitrarily fixed, (\ref{3.5}) follows. One shows without difficulties that (\ref{3.6}) happens.
\par \textbf{Second case:} $\mathcal{M}^{\varepsilon}\left(U_{\varepsilon_{n}}(\cdot)\right)$ is any element in $L^{2}\big(\left[0, T \right];\; \mathbb{L}^{2}(\Omega\times\Lambda) \big)$.
\par Let $\beta>0. $ There exists a function $\mathcal{M}^{\varepsilon}_{0}\left(U_{\varepsilon_{n}}(\cdot)\right)$ with  $\mathcal{M}^{\varepsilon}_{0}\left(U_{\varepsilon_{n}}(\cdot)\right)=\sum_{finite}\big(\theta^{l}\circ \psi_{n} \big)\sigma^{l},\; (0\leq t \leq T)$ with 
$\theta^{l}\circ \psi_{n}\in \mathcal{C}\left(\left[0, T\right]\right)$ and $\sigma^{l}\in \mathcal{D}(\mathcal{A})$ such that \\$\sup_{0\leq t\leq T}\big\|\mathcal{M}^{\varepsilon}\left(U_{\varepsilon_{n}}(t)- \mathcal{M}^{\varepsilon}_{0}\left(U_{\varepsilon_{n}}(t)\right)  \right)  \big\|_{\mathbb{L}^{2}(\Omega\times\Lambda)^{2}}< \beta$ where $U_{\varepsilon_{n}}(t)=\sum\limits^{n}_{i=1}\psi_{ni}w_{i}$ is a solution of (\ref{3.4})-(\ref{3.6}) when $\mathcal{M}^{\varepsilon}$ is replaced by $\mathcal{M}^{\varepsilon}_{0}$. Then, setting $Q_{n}(t, w)=\left[\mathcal{N}^{\varepsilon}U'_{\varepsilon_{n}}(t), w\right] + \left[ \mathcal{A}^{\varepsilon}U_{\varepsilon_{n}}(t), w \right] +\left[\mathcal{M}^{\varepsilon}\left(U_{\varepsilon_{n}}(t)\right),w  \right]-\left[ G_{\varepsilon}(t),w \right]$ we have
\begin{equation}\nonumber
\begin{array}{l}
\left|Q_{n}(t, w)  \right|=\big|\left[\mathcal{M}^{\varepsilon}\left( U_{\varepsilon_{n}}(t) \right), w  \right] -\left[\mathcal{M}^{\varepsilon}_{0}\left(U_{\varepsilon_{n}}(t)  \right),w  \right] \big|\\
\leq \big\|\mathcal{M}^{\varepsilon}\left(U_{\varepsilon_{n}}(t)  \right)-\mathcal{M}^{\varepsilon}_{0}\left(U_{\varepsilon_{n}}(t)  \right)  \big\|_{\mathbb{L}^{2}(\Omega\times \Lambda)^{2}} \|w\|_{\mathbb{L}^{2}(\Omega\times \Lambda)^{2}}\\
\leq \beta\|w\|_{\mathbb{L}^{2}(\Omega\times\Lambda)^{2}} \hbox{ for all } w\in V_{n},
\end{array}
\end{equation}
which ensures the existence of a solution of (\ref{3.4})-(\ref{3.6}).
\begin{lemma}\label{l3.5}
The problem (\ref{3.4})-(\ref{3.6}) admits at least a global solution in $\left[0, T\right]$.
\end{lemma}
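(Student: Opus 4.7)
The plan is to tie together the Faedo--Galerkin existence for the special product case (Lemma \ref{l3.4} with its Remark) and the density/approximation argument laid out immediately above the statement. Given $\beta > 0$, I would first invoke Stone--Weierstrass to produce $\mathcal{M}^\varepsilon_0$ of the form $\sum_{\text{finite}}(\theta^l\circ\psi_n)\sigma^l$ with $\sigma^l \in \mathcal{D}(\mathcal{A})$ and uniform error less than $\beta$ in $\mathbb{L}^2(\Omega\times\Lambda)^2$. By Lemma \ref{l3.4} and the Remark extending it by linearity to finite sums, the system obtained from (\ref{3.4})--(\ref{3.6}) by replacing $\mathcal{M}^\varepsilon$ with $\mathcal{M}^\varepsilon_0$ admits a global solution $U^\beta_{\varepsilon_n} \in \mathcal{C}([0,T]; V_n)$.

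Second, I would pass to the limit $\beta \to 0$. The a priori estimate (\ref{19}) applied to $U^\beta_{\varepsilon_n}$ yields a bound in $L^\infty(0,T; \mathbb{L}^2(\Omega\times\Lambda)^2)$ independent of $\beta$, and the scheme of Lemma \ref{l3.3} likewise controls $\partial_t U^\beta_{\varepsilon_n}$ uniformly. Since $V_n$ is finite-dimensional, these translate into uniform $\mathcal{C}^1([0,T]; \mathbb{R}^n)$ bounds on the coefficient vector $\psi^\beta_n$, so Arzel\`a--Ascoli extracts a subsequence $\psi^{\beta_k}_n \to \psi_n$ in $\mathcal{C}([0,T]; \mathbb{R}^n)$, hence $U^{\beta_k}_{\varepsilon_n} \to U_{\varepsilon_n} := \sum_{i=1}^n \psi_{ni} w_i$ strongly in $\mathcal{C}([0,T]; V_n)$.

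Third, I would pass to the limit in the variational formulation tested against $w\in V_n$ and $\varphi\in\mathcal{D}(]0,T[)$ after an integration by parts in time. The linear terms pass trivially by the weak-$\star$ convergence of $\partial_t U^{\beta_k}_{\varepsilon_n}$ and strong convergence of $U^{\beta_k}_{\varepsilon_n}$. For the nonlinear term, the bound $|Q_n(t,w)|\leq \beta\|w\|_{\mathbb{L}^2(\Omega\times\Lambda)^2}$ obtained immediately above the lemma statement, combined with the Lipschitz estimate (\ref{6}) applied to the strongly convergent sequence $U^{\beta_k}_{\varepsilon_n}$, forces $\mathcal{M}^\varepsilon_0(U^{\beta_k}_{\varepsilon_n})\to \mathcal{M}^\varepsilon(U_{\varepsilon_n})$ in $\mathcal{C}([0,T]; \mathbb{L}^2(\Omega\times\Lambda)^2)$. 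The initial condition (\ref{3.6}) survives by uniform convergence at $t=0$.

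The main obstacle is identifying the nonlinear limit, since $\mathcal{M}^\varepsilon$ is only monotone and Lipschitz rather than of the special product form to which Stone--Weierstrass applies directly. Outside $V_n$ this would force a Minty-type monotonicity argument; here, however, the finite dimensionality of $V_n$ converts weak bounds into strong compactness for free, and the Lipschitz estimate (\ref{6}) then closes the scheme without any additional monotonicity machinery.
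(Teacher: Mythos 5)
Your proposal is correct and follows essentially the same route as the paper: Stone--Weierstrass approximation of the nonlinearity by finite sums $\sum(\theta^l\circ\psi_n)\sigma^l$, global solvability of the approximated Galerkin system via Lemma \ref{l3.4} and its Remark, and removal of the approximation. The only difference is one of explicitness: the paper stops at the residual bound $|Q_n(t,w)|\leq\beta\|w\|_{\mathbb{L}^{2}(\Omega\times\Lambda)^{2}}$ and declares existence, whereas you carry out the $\beta\to 0$ compactness argument in the finite-dimensional space $V_n$ in detail, which is a legitimate (and arguably cleaner) way to close the same argument.
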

\subsection{\textbf{Existence of solution of (\ref{1}).}}
Let $\left(U_{\varepsilon_{n}} \right)$ the sequence define by (\ref{3.4})-(\ref{3.6}). From apriori estimate and properties of $\mathcal{A}$ and $\mathcal{M}^{\varepsilon}: \sup_{n\in \mathbb{N}^{*}}\bigg(\|U_{\varepsilon_{n}}\|_{L^{\infty}\left(0, T; \mathbb{L}^{2}(\Omega\times\Lambda)^{2}  \right)}  \bigg)< +\infty,\;\sup_{n\in \mathbb{N}^{*}}\bigg(\|\mathcal{A}U_{\varepsilon_{n}}\|_{L^{2}\left(0, T; \mathbb{L}^{2}(\Omega\times\Lambda)^{2}  \right)}  \bigg)< +\infty, \; \sup_{n\in \mathbb{N}^{*}}\bigg(\|\mathcal{M}^{\varepsilon}U_{\varepsilon_{n}}\|_{L^{2}\left(0, T; \mathbb{L}^{2}(\Omega\times\Lambda)^{2}  \right)}  \bigg)< +\infty,$  and $\sup_{n\in \mathbb{N}^{*}}\bigg(\|U_{\varepsilon_{n}}(T)\|_{L^{2}\left(0, T; \mathbb{L}^{2}(\Omega\times\Lambda)^{2}  \right)}  \bigg)< +\infty,$  therefore we have the following results:
\begin{lemma}\label{l3.6}
One can extract a subsequence $\left(U\varepsilon_{k_{n}}\right)_{n\in \mathbb{N}^{*}}$ such that when $n\longrightarrow +\infty$, we have:
\begin{equation}\nonumber
\begin{array}{l}
U\varepsilon_{k_{n}} \longrightarrow U_{\varepsilon_{n}} \hbox{ in } L^{2}\bigg(0, T; \mathbb{L}^{2}\left(\Omega\times\Lambda \right)\bigg)-weak,\\
\mathcal{A}U\varepsilon_{k_{n}}\longrightarrow W_{\varepsilon} \hbox{ in } L^{2}\bigg(0, T; \mathbb{L}^{2}\left(\Omega\times \Lambda\right)^{2}\bigg)-weak,\\
 \mathcal{M}^{\varepsilon}U\varepsilon_{k_{n}}\longrightarrow Z_{\varepsilon} \hbox{ in } L^{2}\bigg(0, T; \mathbb{L}^{2}\left(\Omega\times \Lambda\right)^{2}\bigg)-weak,\\
and\\
U\varepsilon_{k_{n}}(T)\longrightarrow g_{\varepsilon} \hbox{ in } \mathbb{L}^{2}\left(\Omega\times\Lambda \right)^{2}-weak.
\end{array}
\end{equation}
\end{lemma}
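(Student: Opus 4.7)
The plan is to invoke the Banach--Alaoglu theorem (equivalently, the Eberlein--\v Smulian theorem for reflexive Banach spaces) four times, and then assemble a single subsequence by a diagonal extraction along which all four weak convergences hold simultaneously.

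First I would observe that, since $T<\infty$, the continuous embedding $L^{\infty}\bigl(0,T;\mathbb{L}^{2}(\Omega\times\Lambda)^{2}\bigr)\hookrightarrow L^{2}\bigl(0,T;\mathbb{L}^{2}(\Omega\times\Lambda)^{2}\bigr)$ combined with the four uniform bounds listed immediately before the lemma statement (which are themselves consequences of Lemma \ref{l3.3} transferred to the Galerkin approximants via the priori estimates in (\ref{19})) yields that $(U_{\varepsilon_{n}})$, $(\mathcal{A}U_{\varepsilon_{n}})$ and $(\mathcal{M}^{\varepsilon}U_{\varepsilon_{n}})$ are bounded in the Hilbert space $L^{2}\bigl(0,T;\mathbb{L}^{2}(\Omega\times\Lambda)^{2}\bigr)$, and that $(U_{\varepsilon_{n}}(T))$ is bounded in the Hilbert space $\mathbb{L}^{2}(\Omega\times\Lambda)^{2}$ (the last being legitimate since each $U_{\varepsilon_{n}}$ belongs to $\mathcal{C}\bigl([0,T];\mathbb{L}^{2}(\Omega\times\Lambda)^{2}\bigr)$ by the construction in Subsection 3.4).

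Next I would perform the extraction successively. Pick a first subsequence $(k^{1}_{n})$ such that $U_{\varepsilon_{k^{1}_{n}}}$ converges weakly to some limit in $L^{2}\bigl(0,T;\mathbb{L}^{2}(\Omega\times\Lambda)^{2}\bigr)$; from this sequence extract a further subsequence $(k^{2}_{n})$ such that $\mathcal{A}U_{\varepsilon_{k^{2}_{n}}}$ converges weakly to some $W_{\varepsilon}$ in the same space; from this, a further subsequence $(k^{3}_{n})$ such that $\mathcal{M}^{\varepsilon}U_{\varepsilon_{k^{3}_{n}}}\rightharpoonup Z_{\varepsilon}$; and finally a further subsequence $(k_{n})$ such that $U_{\varepsilon_{k_{n}}}(T)\rightharpoonup g_{\varepsilon}$ in $\mathbb{L}^{2}(\Omega\times\Lambda)^{2}$. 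Because weak convergence is inherited by further subsequences, the four convergences hold along the common subsequence $(k_{n})$.

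There is no substantive obstacle in this step: the content of the lemma is purely a compactness extraction from uniform bounds, and so no nonlinear or measure-theoretic complication arises here. The genuinely delicate points, namely the identifications $W_{\varepsilon}=\mathcal{A}U_{\varepsilon}$, $Z_{\varepsilon}=\mathcal{M}^{\varepsilon}U_{\varepsilon}$ and $g_{\varepsilon}=U_{\varepsilon}(T)$, are \emph{not} claimed in Lemma \ref{l3.6}; they are the subject of the passage to the limit in (\ref{3.5}) and will require, in particular, a monotonicity argument of Minty type to handle the nonlinear term $\mathcal{M}^{\varepsilon}$, together with the continuity result of Lemma \ref{l3.2}.
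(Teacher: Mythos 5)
Your proposal is correct and coincides with the paper's own argument: the authors simply record the four uniform bounds on $(U_{\varepsilon_{n}})$, $(\mathcal{A}U_{\varepsilon_{n}})$, $(\mathcal{M}^{\varepsilon}U_{\varepsilon_{n}})$ and $(U_{\varepsilon_{n}}(T))$ and then conclude by weak compactness in Hilbert spaces, exactly as you do via successive extraction of subsequences. You are also right that the identifications $W_{\varepsilon}=\mathcal{A}U_{\varepsilon}$, $Z_{\varepsilon}=\mathcal{M}^{\varepsilon}(U_{\varepsilon})$ and $g_{\varepsilon}=U_{\varepsilon}(T)$ are deferred to the proof of Lemma \ref{l3.7}.
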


\begin{lemma}\label{l3.7}
The problem
\begin{equation}\nonumber
\left\{\begin{array}{l}
Find \; U_{\varepsilon}\in L^{2}\bigg(0,T;H_{\hbox{curl}}\left(\Omega; L^{2}(\Lambda)\right)^{2}\bigg),\\
such\; that \; \partial_{t}U_{\varepsilon}\in L^{2}\bigg(0, T; \mathbb{L}^{2}\left(\Omega\times\Lambda\right)^{2}  \bigg),\\
\mathcal{N}^{\varepsilon}\partial_{t}U_{\varepsilon}+\mathcal{A}U_{\varepsilon}+\mathcal{M}^{\varepsilon}(U_{\varepsilon})=G_{\varepsilon}\; in\; \left]0,T \right[ \\
U_{\varepsilon}(0)=U^{\varepsilon}_{0}=\left(E^{0}_{\varepsilon}, H^{0}_{\varepsilon}\right) \; and \; U^{\varepsilon}(T)=g_{\varepsilon}
\end{array}\right.
\end{equation}
\end{lemma}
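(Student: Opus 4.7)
The plan is to identify the weak limits supplied by Lemma \ref{l3.6} as a solution of the stated system, the only delicate point being the identification of the nonlinear limit $Z_{\varepsilon}$, which I would handle by a Minty-type monotonicity argument. Fix $n\geq 1$, take $w\in V_{n}$ and $\varphi\in\mathcal{D}(]0,T[)$ in the Galerkin identity (\ref{3.5}), integrate by parts in $t$ to remove $\partial_{t}U_{\varepsilon_{n}}$, and pass to the limit along the subsequence $(\varepsilon_{k_{n}})$ using the four weak convergences of Lemma \ref{l3.6}. Density of $\bigcup_{n}V_{n}$ in $\mathcal{D}(\mathcal{A})$ (from the spectral construction of $(w_{i})$) upgrades the resulting identity to
\begin{equation*}
\mathcal{N}^{\varepsilon}\partial_{t}U_{\varepsilon}+W_{\varepsilon}+Z_{\varepsilon}=G_{\varepsilon}\quad\text{in }\mathcal{D}'\bigl(]0,T[;\mathcal{D}(\mathcal{A})'\bigr),
\end{equation*}
from which $\partial_{t}U_{\varepsilon}\in L^{2}(0,T;\mathbb{L}^{2}(\Omega\times\Lambda)^{2})$ because $\mathcal{N}^{\varepsilon}$ is an isomorphism of $\mathbb{L}^{2}(\Omega\times\Lambda)^{2}$ and the right-hand side belongs to $L^{2}$. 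Since $\mathcal{A}$ is closed and linear, $W_{\varepsilon}=\mathcal{A}U_{\varepsilon}$, placing $U_{\varepsilon}$ in $L^{2}(0,T;H_{\mathrm{curl}}(\Omega;L^{2}(\Lambda))^{2})$. The traces $U_{\varepsilon}(0)=U^{\varepsilon}_{0}$ and $U_{\varepsilon}(T)=g_{\varepsilon}$ are then extracted by repeating the passage to the limit with test functions $\varphi\in\mathcal{C}^{1}([0,T])$ satisfying either $\varphi(0)=1,\varphi(T)=0$ or $\varphi(0)=0,\varphi(T)=1$, and comparing with the integration by parts of the already-identified limit equation, invoking (\ref{3.7}), (\ref{3.13}) and the weak convergence $U_{\varepsilon_{k_{n}}}(T)\rightharpoonup g_{\varepsilon}$.

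The main obstacle is to prove $Z_{\varepsilon}=\mathcal{M}^{\varepsilon}(U_{\varepsilon})$. The strong monotonicity (\ref{7}) of $\sigma$ gives, for any $V\in L^{2}(0,T;\mathbb{L}^{2}(\Omega\times\Lambda)^{2})$,
\begin{equation*}
0\leq\int_{0}^{T}\bigl[\mathcal{M}^{\varepsilon}(U_{\varepsilon_{k_{n}}})-\mathcal{M}^{\varepsilon}(V),\,U_{\varepsilon_{k_{n}}}-V\bigr]\,dt.
\end{equation*}
Three of the four terms pass to the weak limit at once, leaving only the need of a limsup bound on $\int_{0}^{T}[\mathcal{M}^{\varepsilon}(U_{\varepsilon_{k_{n}}}),U_{\varepsilon_{k_{n}}}]\,dt$. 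I would obtain it from an energy identity: testing (\ref{3.5}) with $w=U_{\varepsilon_{k_{n}}}(t)$ and exploiting $[\mathcal{A}U,U]=0$ (integration by parts combined with $\gamma\wedge E=0$ on $\partial\Omega$) together with the symmetry of $\mathcal{N}^{\varepsilon}$ inherited from $\mu_{ij}=\mu_{ji}$ and $\eta_{ij}=\eta_{ji}$ yields
\begin{equation*}
\int_{0}^{T}\bigl[\mathcal{M}^{\varepsilon}(U_{\varepsilon_{k_{n}}}),U_{\varepsilon_{k_{n}}}\bigr]\,dt=\int_{0}^{T}[G_{\varepsilon},U_{\varepsilon_{k_{n}}}]\,dt+\tfrac{1}{2}(\mathcal{N}^{\varepsilon}U^{0}_{\varepsilon_{n}},U^{0}_{\varepsilon_{n}})-\tfrac{1}{2}(\mathcal{N}^{\varepsilon}U_{\varepsilon_{k_{n}}}(T),U_{\varepsilon_{k_{n}}}(T)).
\end{equation*}
Weak lower semicontinuity of the positive-definite quadratic form $V\mapsto(\mathcal{N}^{\varepsilon}V,V)$ evaluated at $t=T$, together with the strong convergence of the initial data, bounds the $\limsup_{n}$ above by the analogous expression with $(U_{\varepsilon},g_{\varepsilon},U^{\varepsilon}_{0})$; testing the limit equation with $U_{\varepsilon}$ itself (justified by $\partial_{t}U_{\varepsilon}\in L^{2}$, so that $\tfrac{d}{dt}(\mathcal{N}^{\varepsilon}U_{\varepsilon},U_{\varepsilon})=2[\mathcal{N}^{\varepsilon}\partial_{t}U_{\varepsilon},U_{\varepsilon}]$) and using $U_{\varepsilon}(T)=g_{\varepsilon}$ shows that this bound coincides with $\int_{0}^{T}[Z_{\varepsilon},U_{\varepsilon}]\,dt$.

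Substituting into the monotonicity inequality yields $\int_{0}^{T}[Z_{\varepsilon}-\mathcal{M}^{\varepsilon}(V),U_{\varepsilon}-V]\,dt\geq 0$ for every admissible $V$. Choosing $V=U_{\varepsilon}-\lambda\Phi$ with $\lambda>0$ and $\Phi\in L^{2}(0,T;\mathbb{L}^{2}(\Omega\times\Lambda)^{2})$ arbitrary, dividing by $\lambda$ and letting $\lambda\to 0^{+}$ while invoking the Lipschitz continuity of $\mathcal{M}^{\varepsilon}$ provided by Lemma \ref{l3.2} give $\int_{0}^{T}[Z_{\varepsilon}-\mathcal{M}^{\varepsilon}(U_{\varepsilon}),\Phi]\,dt\geq 0$; applying the same argument with $-\Phi$ yields the reverse inequality, so $Z_{\varepsilon}=\mathcal{M}^{\varepsilon}(U_{\varepsilon})$. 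The limit equation thus reads $\mathcal{N}^{\varepsilon}\partial_{t}U_{\varepsilon}+\mathcal{A}U_{\varepsilon}+\mathcal{M}^{\varepsilon}(U_{\varepsilon})=G_{\varepsilon}$ on $]0,T[$ with the prescribed initial datum $U^{\varepsilon}_{0}$ and terminal value $g_{\varepsilon}$, completing the existence proof.
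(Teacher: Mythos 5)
Your argument is correct and follows essentially the same route as the paper: pass to the limit in the Galerkin identities of Lemma \ref{l3.6}, identify $W_{\varepsilon}=\mathcal{A}U_{\varepsilon}$ by (weak) closedness/duality, recover the endpoint values with test functions $\psi(t)v$ having prescribed values at $t=0,T$, and identify $Z_{\varepsilon}=\mathcal{M}^{\varepsilon}(U_{\varepsilon})$ by the Minty monotonicity--hemicontinuity trick built on the energy identity and the weak lower semicontinuity of $V\mapsto(\mathcal{N}^{\varepsilon}V,V)$ (the paper perturbs along $E_{\varepsilon p}+rv$ with a strongly convergent approximation $E_{\varepsilon p}\to E_{\varepsilon}$ rather than your $U_{\varepsilon}-\lambda\Phi$, but the two are interchangeable given the Lipschitz bound (\ref{6})). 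The one piece missing is the uniqueness half of the lemma, which the paper settles first by subtracting two solutions and using the monotonicity of $\sigma$ together with the coercivity of $\mathcal{N}^{\varepsilon}$ and the antisymmetry $[\mathcal{A}U,U]=0$; you should add that short argument.
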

admits a unique solution.
\begin{proof}
For unicity,  assume there exists two solutions $U_{\varepsilon 1}$ and $U_{\varepsilon 2}$ and set $U_{\varepsilon}=U_{\varepsilon 1}-U_{\varepsilon 2}$.
 Then
\begin{equation}\nonumber
\begin{array}{l}
\mathcal{N}^{\varepsilon}\partial_{t}U_{\varepsilon}+\mathcal{A}U_{\varepsilon}+\mathcal{M}(U_{1}^{\varepsilon})-\mathcal{M}^{\varepsilon}(U_{2}^{\varepsilon})=0 \hbox{ with } U_{\varepsilon}(0)=U_{\varepsilon}(T)=0\\
\hbox{therefore}\\
\left(\mathcal{N}^{\varepsilon}\partial_{t}U_{\varepsilon} +\mathcal{A}U_{\varepsilon} +\mathcal{M}^{\varepsilon}(U^{\varepsilon}_{1})-\mathcal{M}^{\varepsilon}(U_{2}^{\varepsilon}), U_{\varepsilon}          \right)=0, \hbox{ and then }\\
\frac{1}{2}\partial_{t}\left(\mathcal{N}^{\varepsilon}U_{\varepsilon}(t), U_{\varepsilon}(t)\right)+\delta|U_{\varepsilon}(t)|^{2}\leq 0.
\end{array}
\end{equation}
For all $T_{1}\in (0, T)$, we get: 
\begin{equation}\nonumber
\begin{array}{l}
c'\|U_{\varepsilon}(T_{1})\|^{2}_{\mathbb{L}^{2}(\Omega\times\Lambda)^{2}}+\delta \int^{T_{1}}_{0}\|U_{\varepsilon}(t)\|^{2}_{\mathbb{L}^{2}(\Omega\times\Lambda)^{2}}dt \leq \left[\mathcal{N}^{\varepsilon}U_{\varepsilon}(T_{1}), U_{\varepsilon}(T_{1})\right] + \\ \delta\int^{T_{1}}_{0}\|U_{\varepsilon}(t)\|^{2}_{\mathbb{L}^{2}(\Omega\times\Lambda)^{2}}dt\leq 0,
\end{array}
\end{equation}
hence $\|U_{\varepsilon}(t)\|_{\mathbb{L}^{2}(\Omega\times\Lambda)^{2}}=0$ for almost all $t \in (0, T)$ and unicity follows.
\par For existence, let $U\varepsilon_{k_{n}}$ the sequence of lemma (3.6) we have $\left[\mathcal{A}U\varepsilon_{k_{n}}, \phi\right]=-\left[U\varepsilon_{k_{n}}, \mathcal{A}\phi\right]$ for all $\phi\in \left[\mathcal{D}(\Omega)\otimes\mathcal{C}^{\infty}(\Lambda)\right]^{6}$ which at the limit leads to $\left[W_{\varepsilon}, \phi \right]=-\left[U_{\varepsilon}, \mathcal{A}\phi\right]$ for all $\phi\in \left[\mathcal{D}(\Omega)\otimes\mathcal{C}^{\infty}(\Lambda)\right]^{6}$, that is $W_{\varepsilon}=\mathcal{A}U_{\varepsilon}$ in the sense of distribution then in the sense of almost every where as $W_{\varepsilon},\; U_{\varepsilon} \in L^{2}\bigg(0, T; \mathbb{L}^{2}(\Omega\times \Lambda)^{2} \bigg)$ and it follows that $U_{\varepsilon} \in L^{2}\bigg(0, T; H_{\hbox{curl}}\left(\Omega; L^{2}(\Lambda) \right)^{2}  \bigg)$. One knows that  
\begin{equation}\nonumber
\begin{array}{l}
\int^{T}_{0}\partial_{t}\left(\mathcal{N}^{\varepsilon}U^{\varepsilon}_{k_{n}}(t), \psi(t)v \right)dt= \left(\mathcal{N}^{\varepsilon}U^{\varepsilon}_{k_{n}}(T), \psi(T)v  \right)-\left(\mathcal{N}^{\varepsilon}U^{\varepsilon}_{k_{n}}(0),\psi(0)v  \right)\\
=\int^{T}_{0}\left(\partial_{t}\mathcal{N}^{\varepsilon}U^{\varepsilon}_{k_{n}}(t), \psi(t)v\right)dt +\int^{T}_{0}\left(\mathcal{N}^{\varepsilon}U^{\varepsilon}_{k_{n}}(t),\psi'(t)v\right)dt\\
\hbox{ for all } \psi\in\mathcal{C}^{1}\left(\left[0, T  \right]  \right) \hbox{ and } v\in \mathbb{L}^{2}\left(\Omega\times\Lambda \right)^{2};
\end{array}
\end{equation}
then taking in account (\ref{3.4})-(\ref{3.6}), we have:
\begin{equation}\nonumber
\begin{array}{l}
\int^{T}_{0}\left(G_{\varepsilon}(t)-\mathcal{A}U\varepsilon_{k_{n}}(t)-\mathcal{M}^{\varepsilon}\left(U\varepsilon_{k_{n}}(t), \psi(t)v\right)\right)dt + \int^{T}_{0}\left(\mathcal{N}^{\varepsilon}U\varepsilon_{k_{n}}(t),\psi'(t)v\right)dt \\ 
\left(\mathcal{N}^{\varepsilon}U\varepsilon_{k_{n}}(T), \psi(T)v \right)-\left(\mathcal{N}^{\varepsilon}U^{\varepsilon}_{0k_{n}}, \psi(0)v  \right),\; \psi\in \mathcal{C}^{1}\left(\left[0, T  \right]\right) \hbox{ and } v\in \mathcal{D}(\mathcal{A}).
\end{array}
\end{equation}
Hence passing to limits when $n\longrightarrow +\infty$ and using density of $\mathcal{D}(\mathcal{A})$ in $\mathbb{L}^{2}(\Omega\times\Lambda)^{2}$, we are lead to:
\begin{equation}\label{3.14}
\begin{array}{l}
\left(\mathcal{N}^{\varepsilon}g_{\varepsilon}, \psi(T)v\right)-\left(\mathcal{N}^{\varepsilon}U^{\varepsilon}_{0}, \psi(0)v\right)=\\
\int^{T}_{0}\left[\left(G_{\varepsilon}-W_{\varepsilon}-Z_{\varepsilon}, \psi(t)v\right)+ \left(\mathcal{N}^{\varepsilon}U_{\varepsilon}(t), \psi'(t)v  \right) \right]dt \\
\psi \in \mathcal{C}^{1}\left(\left[0, T\right]  \right) \hbox{ and } v\in \mathbb{L}^{2}(\Omega\times\Lambda)^{2}.
\end{array}
\end{equation}
For $\psi\in \mathcal{D}(0, T),\; v\in \mathbb{L}^{2}(\Omega\times\Lambda)^{2}$, the preceding equality becomes:
\[\int^{T}_{0}\left(G_{\varepsilon}-W_{\varepsilon}-Z_{\varepsilon}, v \right)\psi(t)+ \left(\mathcal{N}^{\varepsilon}U_{\varepsilon}(t), v\right)\psi'(t)dt=0\]
that is;
\begin{equation}\label{3.15}
\mathcal{N}^{\varepsilon}\partial_{t}U_{\varepsilon}(t)+\mathcal{A}U_{\varepsilon}(t)+Z_{\varepsilon}(t)=G_{\varepsilon}(t)
\end{equation}
and $\mathcal{N}^{\varepsilon}\partial_{t}U^{\varepsilon}(t)\in L^{2}\bigg(0, T; \mathbb{L}^{2}(\Omega\times\Lambda)^{2}\bigg)$. Properties of $\mathcal{N}^{\varepsilon}$ leads to $\left(\partial_{t}E_{\varepsilon},\; \partial_{t}H_{\varepsilon} \right) \in L^{2}\bigg(0, T; \mathbb{L}^{2}(\Omega\times \Lambda)^{2}\bigg)$ with $U_{\varepsilon} \in L^{2}\bigg(0, T; H_{\hbox{curl}}\left(\Omega; L^{2}(\Lambda)\right)^{2}\bigg)$ implies\\ 
$U^{\varepsilon} \in \mathcal{C}\bigg(\left[0, T\right]; \mathbb{L}^{2}(\Omega\times\Lambda)^{2}\bigg)$ and integration by parts formula is written:
\begin{equation}\nonumber
\begin{array}{l}
\left(\mathcal{N}^{\varepsilon}U^{\varepsilon}(T), \psi(T)v\right)-\left(\mathcal{N}^{\varepsilon}U^{\varepsilon}(0), \psi(0)v\right)=\\
\int^{T}_{0}\left(\partial_{t}\mathcal{N}^{\varepsilon}U^{\varepsilon}(t), \psi(t)v \right) + \left(\mathcal{N}^{\varepsilon}U^{\varepsilon}(t), \psi'(t)v  \right)dt; \; \psi \in \mathcal{C}^{1}\left(\left[0, T  \right]  \right) \hbox{ and } v\in \mathbb{L}^{2}(\Omega\times\Lambda)^{2}.
\end{array}
\end{equation}
From the above equality combined with (\ref{3.14}) and with (\ref{3.15}), one gets:
\begin{equation}\nonumber
\begin{array}{l}
\left(\mathcal{N}^{\varepsilon}U^{\varepsilon}(T), \psi(T)v\right)-\left(\mathcal{N}^{\varepsilon}U^{\varepsilon}(0), \psi(0)v\right)= \left(\mathcal{N}^{\varepsilon}g^{\varepsilon}, \psi(T)v \right)-\left(\mathcal{N}^{\varepsilon}U^{\varepsilon}_{0}, \psi(0)v\right);\\
\psi\in \mathcal{C}^{1}\left(\left[0, T  \right]  \right) \hbox{ and } v\in \mathbb{L}^{2}(\Omega\times\Lambda)^{2}.
\end{array}
\end{equation}
For $\psi\in\mathcal{C}^{1}\left(\left[0, T\right]  \right) $ such that $\psi(0)=0,\; \psi(T)=1$ we deduce that $U_{\varepsilon}(T)=g_{\varepsilon}$; for 
$\psi\in\mathcal{C}^{1}\left(\left[0, T\right]  \right) $ such that $\psi(0)=1,\; \psi(T)=0$ we deduce that $U_{\varepsilon}(T)=U^{0}_{\varepsilon}$. Remains to show that $Z_{\varepsilon}=\mathcal{M}^{\varepsilon}(U_{\varepsilon})$. For that (\ref{3.4})-(\ref{3.6}) for sequence $k_{n}$ applied to $U\varepsilon_{k_{n}}$ followed by an integration by parts imply:
\begin{equation}\nonumber
\begin{array}{l}
\frac{1}{2}\left[\big(\mathcal{N}^{\varepsilon}U\varepsilon_{k_{n}}(T), U\varepsilon_{k_{n}}(T)\big)-\big(\mathcal{N}^{\varepsilon}U\varepsilon_{k_{n}}(0), U\varepsilon_{k_{n}}(0)\big)\right]=\int^{T}_{0}\big(\partial_{t}\mathcal{N}^{\varepsilon}U\varepsilon_{k_{n}}(t), U\varepsilon_{k_{n}}(t)\big)dt\\
=\int^{T}_{0}\big(G_{\varepsilon}(t)-\mathcal{A}U\varepsilon_{k_{n}}(t)-\mathcal{M}^{\varepsilon}\big(U\varepsilon_{k_{n}}(t)\big), U\varepsilon_{k_{n}}(t)\big)dt\\
=\int^{T}_{0}\big(G_{\varepsilon}(t)-\mathcal{M}^{\varepsilon}\big(U\varepsilon_{k_{n}}(t)\big), U\varepsilon_{k_{n}}(t)\big)dt\\
=\int^{T}_{0}\big(F_{\varepsilon}(t), E\varepsilon_{k_{n}}(t)\big)dt - \int^{T}_{0}\big(\mathcal{M}^{\varepsilon}_{1}\big(E\varepsilon_{k_{n}}(t)\big), E\varepsilon_{k_{n}}(t)\big)dt.
\end{array}
\end{equation}
As $v\longrightarrow \left(\mathcal{N}^{\varepsilon} v, v  \right)^{\frac{1}{2}}$ define a norm on $\mathbb{L}^{2}(\Omega\times \Lambda),\; v\mapsto \left(\mathcal{N}^{\varepsilon}v,v  \right)$ is weakly lower semi continuous, and $U\varepsilon_{k_{n}}(T)\longrightarrow g_{\varepsilon}$ in $\mathbb{L}^{2}(\Omega\times\Lambda)^{2}$-weak, therefore $\left(\mathcal{N}^{\varepsilon}U_{\varepsilon}(T), U_{\varepsilon}(T) \right) \leq \lim \inf_{n\longrightarrow +\infty} \left(\mathcal{N}^{\varepsilon}U^{\varepsilon}_{k_n}(T), U^{\varepsilon}_{k_{n}}(T)\right)$. Then knowing that $\lim \sup_{n\longrightarrow +\infty}\left(-a_{n} \right)=-\lim\inf _{n\longrightarrow +\infty}(a_{n}); \;a_{n}$ being an real sequence we have:
\begin{equation}\label{3.16}
\begin{array}{l}
\lim\sup_{n\longrightarrow +\infty}\int^{T}_{0}\left(\mathcal{M}^{\varepsilon}_{1}\left(E\varepsilon_{k_{n}}(t)\right), E\varepsilon_{k_{n}}(t)\right)dt=\lim\sup_{n\longrightarrow +\infty}\\
\big[\int^{T}_{0}\left(F_{\varepsilon}(t), E\varepsilon_{k_{n}}(t)\right)dt-\frac{1}{2}\left(\mathcal{N}^{\varepsilon}U\varepsilon_{k_{n}}(T), U\varepsilon_{k_{n}}(T)  \right)+\frac{1}{2}\left(\mathcal{N}^{\varepsilon}U\varepsilon_{k_{n}}(0), U\varepsilon_{k_{n}}(0)\right)\big] \\
=\lim\sup_{n\longrightarrow +\infty}\int^{T}_{0}\left(F_{\varepsilon}(t), E\varepsilon_{k_{n}}(t)\right)dt-\frac{1}{2}\lim\inf_{n\longrightarrow +\infty}\left( \mathcal{N}^{\varepsilon}U\varepsilon_{k_{n}}(T), U\varepsilon_{k_{n}}(T)\right)\\
+\frac{1}{2}\lim\sup_{n\longrightarrow +\infty}\left(\mathcal{N}^{\varepsilon}U\varepsilon_{k_{n}}(0),U\varepsilon_{k_{n}}(0)\right)\leq \int^{T}_{0}\left(F_{\varepsilon}(t), E_{\varepsilon}(t)\right)dt \\
+\frac{1}{2}\left[\left(\mathcal{N}^{\varepsilon}U^{0}_{\varepsilon}, U^{0}_{\varepsilon} \right)-\left(\mathcal{N}^{\varepsilon}g_{\varepsilon}, g_{\varepsilon}\right)\right]\leq \int^{T}_{0}\left(Z_{\varepsilon}, U_{\varepsilon}\right)dt=\int^{T}_{0}\left(Z_{\varepsilon}1, E_{\varepsilon}\right)dt;\; \hbox{with } Z_{\varepsilon}=(Z_{\varepsilon}1,\; 0) \; \hbox{ and }\\ \mathcal{M}^{\varepsilon}(U)=(\mathcal{M}^{\varepsilon}_{1}(U_{1}), 0) \hbox{ for }
 U=(U_{1}, U_{2}).
\end{array}
\end{equation}
In fact \ref{3.14} implies
\begin{equation}\nonumber
\begin{array}{l}
\left[\left(\mathcal{N}^{\varepsilon}g_{\varepsilon}, g_{\varepsilon}\right)-\left(\mathcal{N}^{\varepsilon}U^{0}_{\varepsilon}, U^{0}_{\varepsilon}\right)    \right]=\\
\int^{T}_{0}\left[\left(G_{\varepsilon}-W_{\varepsilon}-Z_{\varepsilon}, U_{\varepsilon}\right)+\left(\mathcal{N}^{\varepsilon}U_{\varepsilon}(t), \partial_{t}U_{\varepsilon}\right)  \right]dt\\
=\int^{T}_{0}\left[\left(\mathcal{N}^{\varepsilon}\partial_{t}U_{\varepsilon}(t), U_{\varepsilon}\right)+\left(\mathcal{N}^{\varepsilon}U_{\varepsilon}(t), \partial_{t}U_{\varepsilon}\right) \right]dt\\
=2\int^{T}_{0}\left(\mathcal{N}^{\varepsilon}\partial_{t}U_{\varepsilon}(t), U_{\varepsilon}\right)dt.
\end{array}
\end{equation}
For $\varepsilon >0$ fixed, let now $(E_{\varepsilon}p)_{p}$ a sequence converging strongly to $E_{\varepsilon}$ in $L^{2}\left(0, T;\; \mathbb{L}^{2}\left(\Omega\times \Lambda \right)  \right)$, let $v\in \mathbb{L}^{2}\left(\Omega\times \Lambda  \right)$ and $r$ a real different from zero. From monotonicity (\ref{7}) of $\sigma$ we have:
\begin{equation}\label{3.17}
\begin{array}{l}
\big(\mathcal{M}^{\varepsilon}_{1}\big(E\varepsilon_{k_{n}}\big)-\mathcal{M}^{\varepsilon}_{1}\big(E_{\varepsilon p}+rv \big), E\varepsilon_{k_{n}}-(E_{\varepsilon p}+rv )  \big)=\\
\big(\mathcal{M}^{\varepsilon}_{1}\big(E\varepsilon_{k_{n}}  \big), E\varepsilon_{k_{n}}   \big)-\big(\mathcal{M}^{\varepsilon}_{1}\big(E\varepsilon_{k_{n}}\big), E_{\varepsilon p}+rv  \big)-\big(\mathcal{M}^{\varepsilon}_{1}(E_{\varepsilon p}+rv), E\varepsilon_{k_{n}}\big)\\
+ \left(\mathcal{M}^{\varepsilon}_{1}(E_{\varepsilon p}+rv), E_{\varepsilon p}+rv  \right)\geq 0.
\end{array}
\end{equation}
As
\begin{equation}\nonumber
\begin{array}{l}
\mathcal{M}^{\varepsilon}_{1}\big(E\varepsilon_{k_{n}}\big)\longrightarrow Z_{1 \varepsilon} \hbox{ in } L^{2}\big(0, T;\; \mathbb{L}^{2}(\Omega\times \Lambda)^{2}\big)\hbox{ -weak},\\
E\varepsilon_{k_{n}}\longrightarrow E_{\varepsilon} \hbox{ in } L^{2}\big(0, T;\; \mathbb{L}^{2}(\Omega\times \Lambda)^{2}\big) \hbox{ -weak and }\\
E_{\varepsilon p}\longrightarrow E_{\varepsilon} \hbox{ in } L^{2}\big(0, T; \; \mathbb{L}^{2}(\Omega\times \Lambda)^{2}\big) \hbox{ -strong}
\end{array}
\end{equation}
it follows that 
\begin{equation}\nonumber
\begin{array}{l}
\lim\sup_{n\longrightarrow +\infty}\int^{T}_{0}\big(\mathcal{M}^{\varepsilon}_{1}\big(E\varepsilon_{k_{n}}\big), E_{\varepsilon p}+rv\big)dt=\int^{T}_{0}\left(Z_{\varepsilon 1}, E_{\varepsilon p}+rv  \right)dt \hbox{ and }\\
\lim\sup_{n\longrightarrow +\infty} \int^{T}_{0}\big(\mathcal{M}^{\varepsilon}_{1}\left(E_{\varepsilon p}+rv \right), E\varepsilon_{k_{n}}\big)dt=\int^{T}_{0}\big(\mathcal{M}^{\varepsilon}_{1}\left(E_{\varepsilon p}+rv\right), E_{\varepsilon}\big)dt.
\end{array}
\end{equation}
Therefore, from inequality (\ref{3.17}) combined with (\ref{3.16}) and to the two upper bound limits above, we get
\begin{equation}\nonumber
\begin{array}{l}
\int^{T}_{0}\left(Z_{1\varepsilon}, E_{\varepsilon}\right)dt-\int^{T}_{0}\left(Z_{1\varepsilon}, E_{\varepsilon p}+rv  \right)dt-\int^{T}_{0}\big(\mathcal{M}^{\varepsilon}_{1}\left(E_{\varepsilon p}+rv\right)  \big)dt\\
+\int^{T}_{0}\big(\mathcal{M}^{\varepsilon}_{1}\left(E_{\varepsilon p}+rv \right), E_{\varepsilon p}+rv \big)dt \geq 0.
\end{array}
\end{equation}
Taking in account lemma (\ref{17}) we have:
\begin{equation}\nonumber
\begin{array}{l}
0\leq \lim_{p\longrightarrow +\infty}\int^{T}_{0}\left(Z_{1\varepsilon}, E_{\varepsilon}\right)dt-\int^{T}_{0}\left(Z_{1\varepsilon}, E_{\varepsilon p}+rv  \right)dt-\int^{T}_{0}\left(\mathcal{M}^{\varepsilon}_{1}\left(E_{\varepsilon p}+rv \right), E_{\varepsilon}\right)dt\\
+\int^{T}_{0}\left(\mathcal{M}^{\varepsilon}_{1}\left(E_{\varepsilon p}+rv\right), E_{\varepsilon p}+rv\right)dt dt=\\
\int^{T}_{0}\left(Z_{1\varepsilon}, E_{\varepsilon}  \right)dt-\int^{T}_{0}\left(Z_{1\varepsilon}, E_{\varepsilon}+rv \right)dt-\int^{T}_{0}\left(\mathcal{M}^{\varepsilon}_{1}\left(E_{\varepsilon}+rv\right), E_{\varepsilon}\right)dt\\
+\int^{T}_{0}\left(\mathcal{M}^{\varepsilon}_{1}\left(E_{\varepsilon}+rv\right), E_{\varepsilon}+rv \right)dt=-r\int^{T}_{0}\left(Z^{\varepsilon}_{1}, v  \right)dt+r\int^{T}_{0}\left(\mathcal{M}^{\varepsilon}_{1}\left(E_{\varepsilon}+rv  \right),v  \right)dt.
\end{array}
\end{equation}
Multiplying by $\frac{1}{r}\;(r>0)$ and taking the limit as $(r\longrightarrow 0^{+})$, one obtain $\int^{T}_{0}\left(Z_{1\varepsilon}, v  \right)dt \leq \int^{T}_{0}\left(\mathcal{M}^{\varepsilon}_{1}(E_{\varepsilon}), v\right)dt, \;\forall v \in \mathbb{L}^{2}(\Omega\times \Lambda)$; next multiplying by $\frac{1}{r}\; (\hbox{with } r<0)$ and taking limit as $(r\longrightarrow 0^{-})$, one gets $\int^{T}_{0}\left(Z_{1\varepsilon}, v  \right)dt \geq \int^{T}_{0}\left(\mathcal{M}^{\varepsilon}_{1}(E_{\varepsilon}), v  \right)dt,\; \forall v\in \mathbb{L}^{2}(\Omega\times \Lambda)$. Consequently, $Z_{1\varepsilon}=\mathcal{M}^{\varepsilon}_{1}(E_{\varepsilon})$ then $Z_{\varepsilon}=\mathcal{M}^{\varepsilon}(U^{\varepsilon})$. 
\end{proof}
\begin{theorem}\label{t3.1}
For all $\varepsilon >0$, there exists one and only one couple of functions $(E_{\varepsilon}, H_{\varepsilon})$ from $Q\times \Lambda$ to $\mathbb{R}^{3}\times \mathbb{R}^{3}$ defined by (\ref{1}) and satisfying at $(E_{\varepsilon}, H_{\varepsilon}) \in L^{2}\bigg(0, T; H_{\mathrm{curl}}\big(\Omega; L^{2}(\Lambda) \big)  \bigg)$ and $(\partial_{t}E_{\varepsilon}, \partial_{t}H_{\varepsilon})\in L^{2}\bigg(0, T; \mathbb{L}^{2}\left(\Omega\times \Lambda  \right)^{2}  \bigg)$.
\end{theorem}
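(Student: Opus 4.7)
The theorem is essentially Lemma \ref{l3.7} together with the reformulation of problem (\ref{1}) as the evolution equation $\mathcal{N}^{\varepsilon}\partial_{t}U_{\varepsilon}+\mathcal{A}U_{\varepsilon}+\mathcal{M}^{\varepsilon}(U_{\varepsilon})=G_{\varepsilon}$ on $\mathcal{D}(\mathcal{A})$, with $U_{\varepsilon}=(E_{\varepsilon},H_{\varepsilon})$. The plan is therefore to summarise the chain Lemma \ref{l3.5} $\Rightarrow$ Lemma \ref{l3.3} $\Rightarrow$ Lemma \ref{l3.6} $\Rightarrow$ Lemma \ref{l3.7} and verify that all regularity requirements are met. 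Concretely, I would start from the Galerkin approximations $U_{\varepsilon_{n}}$ whose global existence on $[0,T]$ is ensured by Lemma \ref{l3.5} (via Stone--Weierstrass approximation of the nonlinearity, followed by a density argument in $F_{\varepsilon}$). Lemma \ref{l3.3} then supplies the uniform bounds of $(U_{\varepsilon_{n}})$ in $L^{2}(0,T;H_{\mathrm{curl}}(\Omega;L^{2}(\Lambda))^{2})\cap L^{\infty}(0,T;\mathbb{L}^{2}(\Omega\times\Lambda)^{2})$ and of $(\mathcal{M}^{\varepsilon}(U_{\varepsilon_{n}}))$ in $L^{2}(0,T;\mathbb{L}^{2}(\Omega\times\Lambda)^{2})$; moreover, differentiating the equation in $t$ and invoking property (\ref{8}) together with the hypotheses on $\partial_{t}F_{\varepsilon},\partial_{t}^{2}F_{\varepsilon}$ gives the bound of $\partial_{t}U_{\varepsilon_{n}}$ in $L^{\infty}(0,T;\mathbb{L}^{2}(\Omega\times\Lambda)^{2})$, which a fortiori yields the claimed $L^{2}$ regularity of $(\partial_{t}E_{\varepsilon},\partial_{t}H_{\varepsilon})$.

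By Lemma \ref{l3.6} we extract a subsequence with weak limits $(U_{\varepsilon},W_{\varepsilon},Z_{\varepsilon},g_{\varepsilon})$. Testing the Galerkin equation against $\phi\in[\mathcal{D}(\Omega)\otimes\mathcal{C}^{\infty}(\Lambda)]^{6}$ and using $[\mathcal{A}U_{\varepsilon_{k_{n}}},\phi]=-[U_{\varepsilon_{k_{n}}},\mathcal{A}\phi]$ identifies $W_{\varepsilon}=\mathcal{A}U_{\varepsilon}$, so that $U_{\varepsilon}\in L^{2}(0,T;H_{\mathrm{curl}}(\Omega;L^{2}(\Lambda))^{2})$; passage to the limit against $\psi(t)v$ with $\psi\in\mathcal{C}^{1}([0,T])$ and $v\in\mathcal{D}(\mathcal{A})$ then delivers the equation in the distributional sense, and the choices $\psi(0),\psi(T)\in\{0,1\}$ recover the prescribed initial and terminal values $U_{\varepsilon}(0)=U^{0}_{\varepsilon}$, $U_{\varepsilon}(T)=g_{\varepsilon}$.

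The main obstacle is the nonlinear identification $Z_{\varepsilon}=\mathcal{M}^{\varepsilon}(U_{\varepsilon})$, handled by Minty's monotonicity trick. From the energy identity for $U_{\varepsilon_{k_{n}}}$, the weak lower semicontinuity of $v\mapsto(\mathcal{N}^{\varepsilon}v,v)$ on $\mathbb{L}^{2}(\Omega\times\Lambda)^{2}$, and the strong convergence of the initial data, one obtains
\[\limsup_{n\to\infty}\int_{0}^{T}(\mathcal{M}_{1}^{\varepsilon}(E_{\varepsilon_{k_{n}}}),E_{\varepsilon_{k_{n}}})\,dt\leq\int_{0}^{T}(Z_{1\varepsilon},E_{\varepsilon})\,dt.\]
Inserting $E_{\varepsilon p}+rv$ (with $E_{\varepsilon p}\to E_{\varepsilon}$ strongly in $L^{2}(0,T;\mathbb{L}^{2}(\Omega\times\Lambda))$, $v\in\mathbb{L}^{2}(\Omega\times\Lambda)$, $r\neq 0$) into the monotonicity inequality (\ref{7}) for $(\mathcal{M}_{1}^{\varepsilon}(E_{\varepsilon_{k_{n}}})-\mathcal{M}_{1}^{\varepsilon}(E_{\varepsilon p}+rv),E_{\varepsilon_{k_{n}}}-(E_{\varepsilon p}+rv))\geq 0$ and passing successively to $n\to\infty$ (using the $\limsup$ bound), $p\to\infty$ (using continuity of the Nemytskii map granted by (\ref{6}) and the continuity lemma of subsection 3.1), and $r\to 0^{\pm}$ gives $Z_{1\varepsilon}=\mathcal{M}_{1}^{\varepsilon}(E_{\varepsilon})$, hence $Z_{\varepsilon}=\mathcal{M}^{\varepsilon}(U_{\varepsilon})$.

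Uniqueness is the standard energy estimate: for two solutions $U_{\varepsilon,1},U_{\varepsilon,2}$, the difference $U=U_{\varepsilon,1}-U_{\varepsilon,2}$ satisfies $\mathcal{N}^{\varepsilon}\partial_{t}U+\mathcal{A}U+\mathcal{M}^{\varepsilon}(U_{\varepsilon,1})-\mathcal{M}^{\varepsilon}(U_{\varepsilon,2})=0$ with $U(0)=0$. Testing with $U$, the skew-adjointness $[\mathcal{A}U,U]=0$ on $\mathcal{D}(\mathcal{A})$ together with the strict monotonicity (\ref{7}) of $\sigma$ yields $\tfrac{1}{2}\partial_{t}(\mathcal{N}^{\varepsilon}U,U)+\delta\|E_{\varepsilon,1}-E_{\varepsilon,2}\|_{L^{2}(\Omega\times\Lambda)^{3}}^{2}\leq 0$, and integration combined with the coercivity of $\mathcal{N}^{\varepsilon}$ forces $U\equiv 0$.
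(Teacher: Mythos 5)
Your proposal follows exactly the route the paper takes: its proof of Theorem \ref{t3.1} is simply the statement that the result is a consequence of Lemmas \ref{l3.3}, \ref{l3.5}, \ref{l3.6} and \ref{l3.7}, and your summary faithfully reproduces the content of that chain (Galerkin approximation via Stone--Weierstrass, a priori estimates, weak limits, Minty's trick for $Z_{\varepsilon}=\mathcal{M}^{\varepsilon}(U_{\varepsilon})$, and the energy-estimate uniqueness argument). No discrepancies to report.
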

\begin{proof}
It is a consequence of lemmas \ref{l3.3}, \ref{l3.5}, \ref{l3.6} and \ref{l3.7}.
\end{proof}
\begin{lemma}\label{l3.8}
The sequence of solutions of (\ref{1}) verify
\begin{equation}\nonumber
\begin{array}{c}
\max \bigg(\sup\limits_{0<\varepsilon \leq 1}\|E_{\varepsilon}\|_{L^{\infty}(0, T; \mathbb{L}^{2}(\Omega\times\Lambda))},\sup\limits_{0< \varepsilon \leq 1}\|H_{\varepsilon}\|_{L^{\infty}(0, T; \mathbb{L}^{2}(\Omega\times\Lambda))}  \bigg)<\infty, \\
\sup\limits_{0<\varepsilon \leq 1}\|J_{\varepsilon}\|_{L^{2}(0, T; \mathbb{L}^{2}(\Omega\times\Lambda))}<\infty, \\
\max \bigg(\sup\limits_{0<\varepsilon \leq 1}\|\mathrm{curl}E_{\varepsilon}\|_{L^{\infty}(0, T; \mathbb{L}^{2}(\Omega\times\Lambda))},\sup\limits_{0< \varepsilon \leq 1}\|\mathrm{curl}H_{\varepsilon}\|_{L^{2}(0, T; \mathbb{L}^{2}(\Omega\times\Lambda))}  \bigg)<\infty, \\
and \\
\max \bigg(\sup\limits_{0<\varepsilon \leq 1}\|\partial_{t}E_{\varepsilon}\|_{L^{2}(0, T; \mathbb{L}^{2}(\Omega\times\Lambda))},\sup\limits_{0< \varepsilon \leq 1}\|\partial_{t}H_{\varepsilon}\|_{L^{\infty}(0, T; \mathbb{L}^{2}(\Omega\times\Lambda))}  \bigg)<\infty.
\end{array}
\end{equation}
\end{lemma}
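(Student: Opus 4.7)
The plan is to follow the blueprint already laid out in Lemma \ref{l3.3}, but to track how each constant depends on the data and then invoke the standing hypotheses on $E^0_\varepsilon$, $H^0_\varepsilon$, $F_\varepsilon$, $\partial_t F_\varepsilon$, $\partial^2_t F_\varepsilon$ (stated at the end of the introduction) to conclude that all bounds are independent of $\varepsilon \in (0,1]$. Since Theorem \ref{t3.1} gives the existence of the unique solution $U_\varepsilon = (E_\varepsilon, H_\varepsilon)$ in the required regularity class, the issue is purely quantitative, and the strong convergence of the initial data in $H_{\mathrm{curl}}(\Omega;L^2(\Lambda))$ plus the boundedness hypotheses on $F_\varepsilon$ and its derivatives immediately translate the constants in Lemma \ref{l3.3} into constants independent of $\varepsilon$.

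First I would recall estimate (\ref{19}), which together with (\ref{6}) and the coercivity of $\mathcal{N}^\varepsilon$ (coming from the lower bound in the hypothesis on $\mu_{ij}, \eta_{ij}$) yields
\[
\|E_\varepsilon\|_{L^\infty(0,T;\mathbb{L}^2(\Omega\times\Lambda))}^2 + \|H_\varepsilon\|_{L^\infty(0,T;\mathbb{L}^2(\Omega\times\Lambda))}^2 + \delta\int_0^T \|E_\varepsilon(t)\|_{\mathbb{L}^2}^2\, dt \leq C\bigl(\|F_\varepsilon\|_{L^2}^2 + \|E^0_\varepsilon\|_{\mathbb{L}^2}^2 + \|H^0_\varepsilon\|_{\mathbb{L}^2}^2\bigr).
\]
Since $(F_\varepsilon)$ converges strongly in $H_{\mathrm{div}}(Q;L^2(\Lambda))$ and $(E^0_\varepsilon, H^0_\varepsilon)$ converges strongly in $H_{\mathrm{curl}}(\Omega;L^2(\Lambda))$, the right-hand side is bounded uniformly in $\varepsilon$; this gives the first bound. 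The bound on $J_\varepsilon$ follows by applying the Lipschitz estimate (\ref{6}) together with $\sigma(\cdot,\cdot,\cdot,0)=0$ from hypothesis (iii), so that $\|J_\varepsilon\|_{L^2(0,T;\mathbb{L}^2)}\leq c_1\|E_\varepsilon\|_{L^2(0,T;\mathbb{L}^2)}$.

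Next, to get the bounds on time derivatives, I differentiate the variational formulation in $t$, test against $\partial_t U_\varepsilon$, and exploit the monotonicity property (\ref{8}) exactly as done in the proof of Lemma \ref{l3.3}. This yields
\[
\tfrac{1}{2}\partial_t\bigl(\mathcal{N}^\varepsilon \partial_t U_\varepsilon(t), \partial_t U_\varepsilon(t)\bigr) \leq \bigl(\partial_t G^\varepsilon(t), \partial_t U_\varepsilon(t)\bigr),
\]
from which, after integration by parts in $t$ in the right-hand side and use of the Cauchy--Schwarz inequality, one controls $\|\partial_t E_\varepsilon(T_1)\|_{\mathbb{L}^2}$ and $\|\partial_t H_\varepsilon(T_1)\|_{\mathbb{L}^2}$ by $\|\partial_t F_\varepsilon\|_{L^\infty(0,T;\mathbb{L}^2)}$, $\|\partial^2_t F_\varepsilon\|_{L^2(0,T;\mathbb{L}^2)}$, $\|E_\varepsilon\|_{L^2(0,T;\mathbb{L}^2)}$ and the initial data $\partial_t U_\varepsilon(0)$. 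The latter is bounded uniformly because it is determined by $F_\varepsilon(0)$, $E^0_\varepsilon$, $H^0_\varepsilon$ via the equation, and all these are uniformly bounded. This delivers $\partial_t E_\varepsilon, \partial_t H_\varepsilon$ bounded in $L^\infty(0,T;\mathbb{L}^2(\Omega\times\Lambda))$, which is stronger than the stated $L^2$-in-time bound for $\partial_t E_\varepsilon$. Finally, reading the first two equations of (\ref{1}) backwards expresses $\mathrm{curl}\, H^\varepsilon = \partial_t D_\varepsilon + J_\varepsilon - F^\varepsilon$ and $\mathrm{curl}\, E^\varepsilon = -\partial_t B_\varepsilon$; together with the boundedness of $\eta^\varepsilon, \mu^\varepsilon$ (uniform in $\varepsilon$), this gives the curl bounds.

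The main obstacle, although technical rather than conceptual, is justifying the differentiation in $t$ of the variational equation and the test by $\partial_t U_\varepsilon$, since a priori $\partial_t U_\varepsilon$ is only in $L^2$ and $U_\varepsilon$ is only in $L^2(0,T;H_{\mathrm{curl}})$. This is handled by working first at the Galerkin level (equations (\ref{3.4})--(\ref{3.6})), where $U_{\varepsilon_n}$ is smooth in $t$, establishing the estimate on $\partial_t U_{\varepsilon_n}$ uniformly in $n$ and $\varepsilon$, and then passing to the limit via the lower semicontinuity of norms under weak convergence, exactly in the spirit of Lemma \ref{l3.6} and Lemma \ref{l3.7}.
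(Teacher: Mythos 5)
Your proposal is correct and follows essentially the same route as the paper: the paper's proof of Lemma \ref{l3.8} simply invokes the a priori estimate (\ref{19}) via Lemma \ref{l3.3}, whose proof consists of exactly the steps you describe (energy estimate plus uniform hypotheses on $F_\varepsilon$, $E^0_\varepsilon$, $H^0_\varepsilon$; the Lipschitz bound (\ref{6}) with $\sigma(\cdot,\cdot,\cdot,0)=0$ for $J_\varepsilon$; time-differentiation of the variational formulation with the monotonicity (\ref{8}) for the $\partial_t$ bounds; and the first two equations of (\ref{1}) for the curl bounds). Your additional remark about justifying the time-differentiation at the Galerkin level is a reasonable elaboration of what the paper leaves implicit.
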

\begin{proof}
The proof follows from apriori estimate (\ref{19}), (see lemma \ref{l3.3}).
\end{proof}
\section{CONVERGENCE AND HOMOGENIZATION RESULTS}
\subsection{\textbf{Preliminaries convergence results.}} The following result is a consequence of lemma \ref{l3.8} and properties of stochastic-periodic two-scale convergence.
\begin{lemma}\label{l4.1}
There exist a fundamental sequence $E$ extracted from $(0,\; 1]$, functions 
\begin{equation}\nonumber
\begin{array}{l}  
\overline{E}_{0}, \overline{H}_{0}\in L^{2}\big(Q\times \Lambda_{0}; L^{2}_{per}(\Theta)\big)^{3},\\
\overline{e}_{0}^{s},\overline{h}_{0}^{s}\in L^{2}\bigg(Q\times \Lambda_{0}; L^{2}_{per}\big(\Theta; H^{1}_{\#}(\Lambda)\big)  \bigg),\\
\overline{e}_{0}^{\sigma},\overline{h_{0}}^{\sigma}\in L^{2}\bigg(Q\times\Lambda\times \Lambda_{0}; L^{2}_{per}\big(\Theta; H^{1}_{\#}(Z)\big)  \bigg),\\
\overline{E}_{1}, \overline{H}_{1}\in H^{1}\bigg(0, T; L^{2}\big(\Omega\times \Lambda; L^{2}_{nv}\big(\Lambda_{0}; L^{2}_{per}(Z) \big)\big)^{3}
\bigg), \\
\overline{E}_{1}^{s}, \overline{H}_{1}^{s}\in L^{2}\bigg(Q\times \Lambda; L^{2}_{per}\big(Z; \mathcal{H}^{1}(\Lambda_{0})\big)^{3}\bigg), \; and\\
\overline{E}_{1}^{\sigma}, \overline{H}_{1}^{\sigma}\in L^{2}\bigg(Q\times \Lambda; L^{2}_{nv}\big(\Lambda_{0}; L^{2}_{per}\big(Z; H^{1}_{\#}(\Theta)\big)\big)^{3}\bigg), \; with\\
\overline{E}_{0}+D_{\omega}\overline{e}_{0}^{s}, \overline{H}_{0}+D_{\omega}\overline{h}^{s}_{0}\in H_{\mathrm{curl}}\big(Q; L^{2}\big(\Lambda\times\Lambda_{0}; L^{2}_{per}(\Theta)  \big)  \big),\\
\overline{J}_{0}\in L^{2}\big(Q\times\Lambda\times\Lambda_{0}; L^{2}_{per}(Y)  \big),
\end{array}
\end{equation}
such that, when $E\ni \varepsilon\longrightarrow 0$, one has:
\begin{equation}\nonumber
\begin{array}{l}
E_{\varepsilon}\rightarrow \overline{E}_{1}=\overline{E}_{0}+D_{\omega}\overline{e}^{s}_{0}+D_{y}\overline{e}^{\sigma}_{0}\; in\; L^{2}\left(Q\times\Lambda\times\Lambda_{0}\right)^{3}-weak\; stoch-2s,\\
H_{\varepsilon}\rightarrow \overline{H}_{1}=\overline{H}_{0}+D_{\omega}\overline{h}^{s}_{0}+D_{y}\overline{h}^{\sigma}_{0}\; in\; L^{2}\left(Q\times\Lambda\times\Lambda_{0}\right)^{3}-weak\; stoch-2s,\\
J_{\varepsilon} \rightarrow \overline{J}_{0}\; in \; L^{2}\left(Q\times\Lambda\times\Lambda_{0}\right)^{3}-weak\; stoch-2s\\
\partial_{t}E_{\varepsilon}\rightarrow \partial_{t}\overline{E}_{1}+D^{\omega_{0}}\overline{E}^{s}_{1}+\partial_{\tau}\overline{E}^{\sigma}_{1}\; in\; L^{2}\left(Q\times\Lambda\times\Lambda_{0}  \right)^{3}-weak\; stoch -2s\\
\partial_{t}H_{\varepsilon}\rightarrow \partial_{t}\overline{H}_{1}+D^{\omega_{0}}\overline{H}^{s}_{1}+\partial_{\tau}\overline{H}^{\sigma}_{1}\; in\; L^{2}\left(Q\times\Lambda\times\Lambda_{0}  \right)^{3}-weak\; stoch -2s
\end{array}
 \end{equation}
Moreover, by setting
\begin{equation}\nonumber
\begin{array}{l}
E_{0}=\int\!\!\int_{\Lambda_{0}\times\Theta}\overline{E}_{0}d\mu^{0}d\tau,\; H_{0}=\int\!\!\int_{\Lambda_{0}\times\Theta}\overline{H}_{0}d\mu^{0}d\tau\in L^{2}\left(0, T; H_{\mathrm{curl}}(\Omega)  \right),\\
e^{s}_{0}=\int\!\!\int_{\Lambda_{0}\times \Theta}\overline{e}^{s}_{0}d\mu^{0}d\tau, h^{s}_{0}=\int\!\!\int_{\Lambda_{0}\times\Theta}\overline{h}^{s}_{0}d\mu^{0}d\tau \in H_{\mathrm{curl}}\big(Q; H^{1}_{\#}(\Lambda)\big),\\
e^{\sigma}_{0}=\int\!\!\int_{\Lambda_{0}\times \Theta}\overline{e}^{\sigma}_{0}d\mu^{0}d\tau, h^{\sigma}_{0}=\int\!\!\int_{\Lambda_{0}\times\Theta}\overline{h}^{\sigma}_{0}d\mu^{0}d\tau \in L^{2}\big(Q\times\Lambda; H^{1}_{\#}(Z)\big),\\
E_{1}=\int_{\Lambda_{0}}\overline{E}_{1}d\mu^{0}, H_{1}=\int_{\Lambda_{0}}\overline{H}_{1}d\mu^{0}\in H^{1}\big(0, T; L^{2}\big(\Omega\times\Lambda;\; L^{2}_{per}(Z)^{3}\big)\big),\\
D^{\omega_{0}}E^{s}_{1}=\int_{\Lambda_{0}}D^{\omega_{0}}\overline{E}^{s}_{1}d\mu^{0},\; D^{\omega_{0}}H^{s}_{1}=\int_{\Lambda_{0}}D^{\omega_{0}}\overline{H}^{s}_{1}d\mu^{0},\\
E^{\sigma}_{1}=\int\!\!\int_{\Lambda_{0}\times \Theta}\overline{E}^{\sigma}_{1}d\mu^{0}d\tau\in L^{2}\big(Q\times \Lambda; L^{2}_{per}(Z)^{3}\big),\\
H^{\sigma}_{1}=\int\!\!\int_{\Lambda_{0}\times \Theta}\overline{H}^{\sigma}_{1}d\mu^{0}d\tau\in L^{2}\big(Q\times \Lambda; L^{2}_{per}(Z)^{3}\big),\\
J_{0}=\int\!\!\int_{\Lambda_{0}\times\Theta}\overline{J}_{0}d\mu^{0}d\tau,
\end{array}
\end{equation}
we have
\begin{equation}\nonumber
\begin{array}{l}
E_{1}=E_{0}+D_{\omega}e^{s}_{0}+D_{y}e^{\sigma}_{0},\; H_{1}=H_{0}+D_{\omega}h_{0}^{s}+D_{y}h^{\sigma}_{0},\\
D^{\omega_{0}}E^{s}_{1}=D^{\omega_{0}}H^{s}_{1}=\partial_{\tau}E^{\sigma}_{1}=\partial_{\tau}H^{\sigma}_{1}=O,
\end{array}
\end{equation}
and for all $f=f(x, t, \omega, z)\in L^{2}\left(Q; \mathcal{C}^{\infty}(\Lambda; \mathcal{C}_{per}(Z))  \right)^{3},$ when $E\ni \varepsilon \longrightarrow 0,$
\begin{equation}\label{4.1}
\begin{array}{l}
\int\!\!\int_{Q\times\Lambda}E_{\varepsilon}\cdot f^{\varepsilon}dxdtd\mu \rightarrow \int\!\!\int\!\!\int_{Q\times\Lambda\times Z}\left(E_{0}+D_{\omega}e^{s}_{0}
+D_{y}e^{\sigma}_{0}\right)\cdot fdxdtd\mu dz, \\

\int\!\!\int_{Q\times\Lambda}H_{\varepsilon}\cdot f^{\varepsilon}dxdtd\mu \rightarrow \int\!\!\int\!\!\int_{Q\times\Lambda\times Z}\left(H_{0}+D_{\omega}h^{s}_{0}
+D_{y}h^{\sigma}_{0}\right)\cdot fdxdtd\mu dz,\\
\int\!\!\int_{Q\times\Lambda}J_{\varepsilon}\cdot f^{\varepsilon}dxdtd\mu \rightarrow \int\!\!\int\!\!\int_{Q\times\Lambda\times Z}J_{0}\cdot fdxdtd\mu dz,\\
\int\!\!\int_{Q\times \Lambda}\partial_{t}E_{\varepsilon}\cdot f^{\varepsilon}dxdtd\mu \rightarrow \int\!\!\int\!\!\int_{Q\times\Lambda\times Z}\partial_{t}\left(E_{0}+D_{\omega}e^{s}_{0}+D_{y}e^{\sigma}_{0}\right)\cdot fdxdtd\mu dz, \\
\int\!\!\int_{Q\times\Lambda}\partial_{t}H_{\varepsilon}\cdot f^{\varepsilon}dxdtd\mu \rightarrow \int\!\!\int\!\!\int_{Q\times\Lambda\times Z}\partial_{t}\left(H_{0}+D_{\omega}h^{s}_{0}
+D_{y}h^{\sigma}_{0}\right)\cdot fdxdtd\mu dz.
\end{array}
\end{equation}
\end{lemma}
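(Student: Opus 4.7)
The proof is essentially an assembly of the compactness machinery developed in Section 2 applied to the a priori bounds provided by Lemma \ref{l3.8}. I would proceed as follows.

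\textbf{Step 1 (extraction of subsequences).} From Lemma \ref{l3.8} the sequences $(E_\varepsilon)$ and $(H_\varepsilon)$ are bounded in $H_{\mathrm{curl}}\bigl(Q;L^{2}(\Lambda\times\Lambda_{0})\bigr)$, the sequences $(\partial_{t}E_\varepsilon)$, $(\partial_{t}H_\varepsilon)$ are bounded in $L^{2}(Q\times\Lambda\times\Lambda_{0})^{3}$, and $(J_\varepsilon)$ is bounded in $L^{2}(Q\times\Lambda\times\Lambda_{0})^{3}$. Applying Proposition 2.3 to $(E_\varepsilon)$ and $(H_\varepsilon)$, Proposition 2.2 to $(\partial_{t}E_\varepsilon)$ and $(\partial_{t}H_\varepsilon)$, and the compactness Lemma for weak stoch-2s convergence to $(J_\varepsilon)$, and then using the usual diagonal extraction, I would produce a single fundamental sequence $E\subset (0,1]$ along which all five convergences of the statement hold, together with the corresponding limit functions $\overline{E}_{0},\overline{H}_{0}$, $\overline{e}_{0}^{s},\overline{h}_{0}^{s}$, $\overline{e}_{0}^{\sigma},\overline{h}_{0}^{\sigma}$, $\overline{E}_{1},\overline{H}_{1}$, $\overline{E}_{1}^{s},\overline{H}_{1}^{s}$, $\overline{E}_{1}^{\sigma},\overline{H}_{1}^{\sigma}$, $\overline{J}_{0}$, each living in exactly the function space dictated by the corresponding proposition.

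\textbf{Step 2 (testing formula).} The convergences (\ref{4.1}) in the pointed-test form $\int\!\!\int_{Q\times\Lambda}E_{\varepsilon}\cdot f^{\varepsilon}dx\,dt\,d\mu\to\int\!\!\int\!\!\int_{Q\times\Lambda\times Z}(E_{0}+D_{\omega}e_{0}^{s}+D_{y}e_{0}^{\sigma})\cdot f\,dx\,dt\,d\mu\,dz$ (and its analogues for $H_{\varepsilon}$, $J_{\varepsilon}$, $\partial_{t}E_{\varepsilon}$, $\partial_{t}H_{\varepsilon}$) are specialisations of the general weak stoch-2s convergence to test functions $f=f(x,t,\omega,z)$ that are independent of $\omega_{0}\in\Lambda_{0}$ and $\tau\in\Theta$; by invariance of the measure $\mu^{0}$ and by the Birkhoff-type averaging on the periodic cell $\Theta$, the integration against such test functions produces the averaged limits $E_{0}=\int\!\!\int\overline{E}_{0}\,d\mu^{0}d\tau$, $e_{0}^{s}=\int\!\!\int\overline{e}_{0}^{s}\,d\mu^{0}d\tau$, and so on. The regularity assertions $E_{0},H_{0}\in L^{2}(0,T;H_{\mathrm{curl}}(\Omega))$, $e_{0}^{s},h_{0}^{s}\in H_{\mathrm{curl}}(Q;H^{1}_{\#}(\Lambda))$, etc.\ follow by choosing $f$ of the form $\varphi(x,t)\otimes g(\omega)$ or $\varphi(x,t)\otimes D_{\omega}g(\omega)$ with $\varphi\in\mathcal{D}(Q)$ and $g\in\mathcal{C}^{\infty}(\Lambda)$, and using the convergence of $\mathrm{curl}\,E_{\varepsilon}$, $\mathrm{curl}\,H_{\varepsilon}$ supplied by Proposition 2.3.

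\textbf{Step 3 (derivation of the algebraic identities).} The identities $E_{1}=E_{0}+D_{\omega}e_{0}^{s}+D_{y}e_{0}^{\sigma}$ and $H_{1}=H_{0}+D_{\omega}h_{0}^{s}+D_{y}h_{0}^{\sigma}$ are obtained by integrating the stoch-2s decompositions $\overline{E}_{1}=\overline{E}_{0}+D_{\omega}\overline{e}_{0}^{s}+D_{y}\overline{e}_{0}^{\sigma}$, $\overline{H}_{1}=\overline{H}_{0}+D_{\omega}\overline{h}_{0}^{s}+D_{y}\overline{h}_{0}^{\sigma}$ with respect to $(\omega_{0},\tau)\in\Lambda_{0}\times\Theta$ (exchanging the integration with the $D_{\omega}$ and $D_{y}$ operators, which act on the complementary variables). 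The vanishing relations $D^{\omega_{0}}E_{1}^{s}=D^{\omega_{0}}H_{1}^{s}=\partial_{\tau}E_{1}^{\sigma}=\partial_{\tau}H_{1}^{\sigma}=0$ are a direct consequence of integration against the invariant probability $\mu^{0}$ and against $\tau\in\Theta$, using property (vi) of Section 2 (the average of a stochastic derivative over an invariant measure vanishes) and a periodicity argument in $\tau$.

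\textbf{Main obstacle.} The routine part is essentially bookkeeping once Propositions 2.2, 2.3 and Lemma 2.2 of Section 2 are in force. The delicate point I anticipate is the \emph{compatibility} between the two distinct decompositions that arise when applying Proposition 2.3 to the sequence $(E_\varepsilon)$ (which produces $\overline{E}_{0},\overline{e}_{0}^{s},\overline{e}_{0}^{\sigma}$) and Proposition 2.2 to $(\partial_{t}E_\varepsilon)$ (which produces $\overline{E}_{1},\overline{E}_{1}^{s},\overline{E}_{1}^{\sigma}$), because the same limit of $E_\varepsilon$ must coincide with $\overline{E}_{1}$ along a further common subsequence. Establishing that the limit $\overline{E}_{1}$ arising in Proposition 2.2 can indeed be identified with $\overline{E}_{0}+D_{\omega}\overline{e}_{0}^{s}+D_{y}\overline{e}_{0}^{\sigma}$ (and similarly for $H$) will require an additional duality/density argument testing with functions of the form $\partial_{t}\varphi\cdot f^{\varepsilon}$ and integrating by parts in $t$, so that the weak limit of $E_{\varepsilon}$ obtained from Proposition 2.3 is matched with the primitive of the limit from Proposition 2.2.
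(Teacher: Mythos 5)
Your proposal is correct and follows essentially the route the paper intends: the paper gives no written proof of Lemma \ref{l4.1}, merely invoking the a priori bounds of Lemma \ref{l3.8} together with the compactness results of Section 2 (the basic compactness lemma for $J_{\varepsilon}$, the $H_{\mathrm{curl}}$-proposition for $E_{\varepsilon},H_{\varepsilon}$, the time-derivative proposition for $\partial_{t}E_{\varepsilon},\partial_{t}H_{\varepsilon}$) and deferring details to reference [17], which is exactly the assembly you carry out (modulo an off-by-one in your citation of the proposition numbers). The "compatibility" issue you flag is handled simply by uniqueness of the weak stoch-2s limit along the common diagonal subsequence, so no extra duality argument is needed.
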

For more details we refer to \cite{17}.
\begin{lemma}\label{l4.2}
The functions $E_{1}$ and $H_{1}$ define by lemma $(\ref{l4.1})$ satisfies:
\begin{equation}\nonumber
\begin{array}{l}
\mathrm{div}_{z}\left[\mu(x, \omega, \cdot)H_{1}(x, t, \omega, \cdot) \right]=0 \; in\; Z,\; and \\
\mathrm{div}_{z}\left[\eta(x, \omega, \cdot)\partial_{t}E_{1}(x, t, \omega, \cdot)+ J_{0}(x, t, \omega, \cdot)\right]=0 \; in\; Z
\end{array}
\end{equation}
for almost all $(x,t, \omega)\in Q\times\Lambda$; and
\begin{equation}\nonumber
\begin{array}{l}
\mathrm{div}_{\omega}\big[\int_{Z}\mu(x,\cdot,z)H_{1}(x, t, \cdot, z)dz\big]=0 \; in\; \Lambda,\; and\\
\mathrm{div}_{\omega}\big[\int_{Z}\eta(x, \cdot, z)\partial_{t}E_{1}(x, t, \cdot, z)+J_{0}(x, t, \cdot, z)  \big]=0\; in\; \Lambda
\end{array}
\end{equation}
for almost all $(x, t)\in Q.$
\end{lemma}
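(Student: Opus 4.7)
The plan is to obtain the four identities by passing to the stochastic--two-scale limit in the Gauss-type equation $\mathrm{div}\,B_{\varepsilon}=0$ (which yields the statements for $\mu H_{1}$) and in Ampere's equation $\partial_{t}D_{\varepsilon}+J_{\varepsilon}-F_{\varepsilon}=\mathrm{curl}\,H_{\varepsilon}$ tested against gradient fields so that the curl term disappears (which yields the statements involving $\eta\partial_{t}E_{1}+J_{0}$). The two different divergence conditions, in $z$ versus in $\omega$, are singled out by choosing the appropriate scale for the test function: an $\varepsilon^{2}$-prefactor to make $\nabla_{z}\psi$ dominant, and an $\varepsilon$-prefactor to make $D_{\omega}\psi$ dominant.

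For the $z$-divergence on $\mu H_{1}$, I would take $\phi_{\varepsilon}(x,t,\omega)=\varepsilon^{2}\varphi(x,t)\psi(x,t,\hbox{\LARGE{$\tau$}}(x/\varepsilon)\omega,x/\varepsilon^{2})$ with $\varphi\in\mathcal{D}(Q)$ and $\psi\in\mathcal{D}(Q;\mathcal{C}^{\infty}(\Lambda;\mathcal{C}^{\infty}_{per}(Z)))$. The chain rule as in Lemma \ref{l3.2} gives $\nabla_{x}\phi_{\varepsilon}=\varphi(\nabla_{z}\psi)^{\varepsilon}+\varepsilon\varphi(D_{\omega}\psi)^{\varepsilon}+O(\varepsilon^{2})$, so only the first term survives as $\varepsilon\rightarrow 0$. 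Using $\mathrm{div}\,B_{\varepsilon}=0$ one has $\int_{Q\times\Lambda\times\Lambda_{0}}B_{\varepsilon}\cdot\nabla_{x}\phi_{\varepsilon}\,dx\,dt\,d\mu\,d\mu^{0}=0$, and passing to the stoch-2s limit via Lemma \ref{l4.1} (with the admissibility of $\mu$ as a test coefficient) produces
\[\int_{Q\times\Lambda\times Z}\mu(x,\omega,z)H_{1}(x,t,\omega,z)\cdot\varphi(x,t)\nabla_{z}\psi\,dx\,dt\,d\mu\,dz=0,\]
after the $(\omega_{0},\tau)$-integrations erase the pieces $D^{\omega_{0}}\overline{h}_{0}^{s}$ and $\partial_{\tau}\overline{h}_{0}^{\sigma}$ by $\mu^{0}$-invariance and $\Theta$-periodicity. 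Density of the tensor-product test functions then yields $\mathrm{div}_{z}[\mu H_{1}]=0$ in $Z$ for almost every $(x,t,\omega)$. For the twin identity involving $\eta\partial_{t}E_{1}+J_{0}$, I would pair the same $\phi_{\varepsilon}$ with Ampere's equation; the curl contribution vanishes because $\int_{\Omega}\mathrm{curl}\,H_{\varepsilon}\cdot\nabla_{x}\phi_{\varepsilon}\,dx=0$ for $\phi_{\varepsilon}$ with $\Omega$-compact support, and the forcing contribution drops since $F$ is $z$-independent and $\int_{Z}\nabla_{z}\psi\,dz=0$ by periodicity.

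For the $\omega$-divergence identities I would replace $\phi_{\varepsilon}$ by $\varepsilon\varphi(x,t)\psi(x,t,\hbox{\LARGE{$\tau$}}(x/\varepsilon)\omega)$ with $\psi\in\mathcal{D}(Q;\mathcal{C}^{\infty}(\Lambda))$; the chain rule now gives $\nabla_{x}\phi_{\varepsilon}=\varphi(D_{\omega}\psi)^{\varepsilon}+O(\varepsilon)$. Repeating the two previous computations, the $Z$-average arises naturally because $\psi$ is $z$-independent, and the duality between $D_{\omega}$ and $\mathrm{div}_{\omega}$ on $W^{1,2}_{\#}(\Lambda)$ (set up just before Lemma \ref{l2.1}) delivers $\mathrm{div}_{\omega}\bigl[\int_{Z}\mu H_{1}\,dz\bigr]=0$ and the corresponding identity for $\int_{Z}(\eta\partial_{t}E_{1}+J_{0})\,dz$, for a.e.\ $(x,t)$ in $\Lambda$.

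The main obstacle is the careful book-keeping across the $(\omega_{0},\tau)$-variables. By Lemma \ref{l4.1} the stoch-2s limit of $\partial_{t}E_{\varepsilon}$ decomposes as $\partial_{t}\overline{E}_{1}+D^{\omega_{0}}\overline{E}_{1}^{s}+\partial_{\tau}\overline{E}_{1}^{\sigma}$, and one must verify that the last two contributions vanish when paired with test functions independent of $(\omega_{0},\tau)$; this uses the $\mu^{0}$-invariance under $\hbox{\LARGE{$\tau$}}_{0}$ together with $\Theta$-periodicity, but has to be invoked afresh for each of the four identities. A further subtlety is the passage from the variational identity, valid for smooth tensor-product test functions, to the pointwise distributional divergence statements in $Z$ (respectively in $\Lambda$) for almost every $(x,t,\omega)$ (respectively a.e.\ $(x,t)$); this is handled via the density of $\mathcal{D}(Q)\otimes\mathcal{C}^{\infty}(\Lambda)\otimes\mathcal{C}^{\infty}_{per}(Z)$ in the relevant function spaces recalled in Section 2.
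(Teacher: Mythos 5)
Your proof is correct and is essentially the argument the paper invokes: the paper simply cites the linear case in reference [17] for the two identities involving $\mu H_{1}$ and notes that the remaining two follow by replacing $\sigma^{\varepsilon}_{ij}E^{j}_{\varepsilon}$ with $J^{i}_{\varepsilon}$ and using the third convergence of Lemma \ref{l4.1}, which is exactly what your $\varepsilon^{2}$- and $\varepsilon$-scaled gradient test functions carry out in detail. The only point worth tightening is the forcing term in the $\omega$-divergence case, which is most cleanly dispatched by integrating by parts to exhibit the explicit factor $\varepsilon$ against the bounded sequence $\mathrm{div}\,F_{\varepsilon}$; otherwise your treatment of the $(\omega_{0},\tau)$-averaging and the density step matches what the cited proof requires.
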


\begin{proof}
$\mathrm{div}_{z}\left[\mu(x, \omega, \cdot)H_{1}(x,t,\omega, \cdot) \right]=0$ in $Z$ and $\mathrm{div}_{\omega}\big[\int_{Z}\mu(x,\cdot,z)H_{1}(x, t, \cdot, z)dz \big]=0$ in $\Lambda$, are obtained exactly as in \cite{17}. For the two others equalities, one replaces in the proof of lemma \ref{l4.2}, therein $\sigma^{\varepsilon}_{ij}E^{j}_{\varepsilon}$ by $\sigma^{\varepsilon}_{i}(x,.,.,E_{\varepsilon})=J^{i}_{\varepsilon}$
and taking into account lemma \ref{l4.1} see the third convergence in (\ref{4.1}), one gets the expected result.
\end{proof}
\begin{lemma}\label{l4.3}
We also have a.e in $(x,t)\in Q,$
\begin{equation}\nonumber
\begin{array}{l}
\int_{\Lambda}\mathrm{div}\int_{Z}\mu(x,\omega,z)H_{1}(x,t,\omega,z)dz g(\omega)d\mu=0,\; and\\
\int_{\Lambda}\mathrm{div}\big[\int_{Z}\left(\eta(x, \omega,z)\partial_{t}E_{1}(x, t, \omega, z)+J_{0}(x, \omega, z)\right)dz\big]g(\omega)d\mu\\
=\int_{\Lambda}\mathrm{div}F(x, t, \omega)g(\omega)d\mu
\end{array}
\end{equation}
for all $g\in \mathcal{C}^{\infty}(\Lambda)\cap L^{2}_{nv}(\Lambda).$
\end{lemma}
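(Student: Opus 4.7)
My plan is to derive both identities by testing the original Maxwell system against product test functions of the form $\nabla_x\psi(x)\,\chi(t)\,g(\omega)$, with $\psi\in\mathcal{D}(\Omega)$, $\chi\in\mathcal{D}(0,T)$, and $g\in\mathcal{C}^\infty(\Lambda)\cap L^2_{nv}(\Lambda)$, and then passing to the stochastic two-scale limit via Lemma~\ref{l4.1}. The key structural point is that, because $g$ is $\tau$-invariant on $\Lambda$, one has $g(\tau(x/\varepsilon)\omega)=g(\omega)$, so these tests are of the $f^\varepsilon$-form required by the definition of stoch-2s convergence; and because they are constant in $(\omega_0,\tau)$, the $D^{\omega_0}$- and $\partial_\tau$-correctors appearing in the limits of Lemma~\ref{l4.1} will integrate out.

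For the first identity I start from $\mathrm{div}_xB_\varepsilon=0$ in $\Omega$, which after pairing with $\chi\,g$ and integrating over $Q\times\Lambda$ gives
\[
\int_{Q\times\Lambda}\mu^\varepsilon H_\varepsilon\cdot\nabla_x\psi\,\chi\,g\,dxdtd\mu=0.
\]
Using the symmetry $\mu_{ij}=\mu_{ji}$ to rewrite this as $\int H_\varepsilon\cdot(\mu^\varepsilon\nabla_x\psi)\,\chi\,g$ and applying the stoch-2s convergence $H_\varepsilon\to H_1=H_0+D_\omega h_0^s+D_z h_0^\sigma$ from Lemma~\ref{l4.1}, the limit is
\[
\int_{Q\times\Lambda\times Z}\mu(x,\omega,z)\,H_1(x,t,\omega,z)\cdot\nabla_x\psi\,\chi\,g\,dxdtd\mu dz=0.
\]
Letting $\chi$ and $\psi$ range arbitrarily, then applying Fubini in $(\omega,z)$, delivers the first identity for a.e.\ $(x,t)\in Q$.

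For the second identity I take the formal divergence of the first equation in \eqref{1}: since $\mathrm{div}(\mathrm{curl}\,H_\varepsilon)=0$, the field $\partial_tD_\varepsilon+J_\varepsilon-F_\varepsilon$ is divergence-free in $\Omega$, so testing against $\nabla_x\psi\,\chi\,g$ yields
\[
\int_{Q\times\Lambda}\bigl(\partial_tD_\varepsilon+J_\varepsilon-F_\varepsilon\bigr)\cdot\nabla_x\psi\,\chi\,g\,dxdtd\mu=0.
\]
Expanding $\partial_tD_\varepsilon=\eta^\varepsilon\partial_tE_\varepsilon$ and using the symmetry of $\eta$, I pass to the limit term by term: for $\partial_tE_\varepsilon$ via Lemma~\ref{l4.1} (with the corrector pieces $D^{\omega_0}\overline{E}_1^s$ and $\partial_\tau\overline{E}_1^\sigma$ dropping out against an $(\omega_0,\tau)$-constant test), for $J_\varepsilon$ via the third convergence in \eqref{4.1}, and for $F_\varepsilon$ via its strong $L^2(Q\times\Lambda)^3$ convergence to $F$. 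This produces
\[
\int_{Q\times\Lambda\times Z}(\eta\,\partial_tE_1+J_0)\cdot\nabla_x\psi\,\chi\,g\,dxdtd\mu dz=\int_{Q\times\Lambda}F\cdot\nabla_x\psi\,\chi\,g\,dxdtd\mu,
\]
and arbitrariness of $\chi,\psi$ together with Fubini gives the second identity.

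The step demanding the most care will be the limit of $\partial_tE_\varepsilon$: one must confirm that the two corrector contributions truly vanish against the $(\omega_0,\tau)$-independent test (by $\int_{\Lambda_0}D^{\omega_0}(\cdot)\,d\mu^0=0$ on $\mathcal{H}^1(\Lambda_0)$ and by $\Theta$-periodicity in $\tau$), and then that the averaging definition $E_1=\int_{\Lambda_0}\overline{E}_1\,d\mu^0$ from Lemma~\ref{l4.1} legitimately lets us write the remaining integrand in terms of $E_1$ rather than $\overline{E}_1$. Once this bookkeeping is settled, the conclusion follows from a routine combination of coefficient symmetry, strong convergence of $F_\varepsilon$, and the already established stoch-2s convergences.
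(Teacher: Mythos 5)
Your proposal is correct and follows essentially the same route as the paper: the paper's proof simply defers to the linear case in reference [17] (test $\mathrm{div}\,B_{\varepsilon}=0$ and the divergence of the first equation of (\ref{1}) against $\nabla_{x}\psi\,\chi\,g$ with $g$ $\tau$-invariant, then pass to the stoch-2s limit), noting only that $\sigma^{\varepsilon}_{ij}E^{j}_{\varepsilon}$ is replaced by $J^{i}_{\varepsilon}$ and handled by the third convergence in (\ref{4.1}). Your explicit treatment of the vanishing of the $(\omega_{0},\tau)$-correctors and of the admissibility of $\mu\nabla\psi\,g$, $\eta\nabla\psi\,g$ as test functions is exactly the bookkeeping the paper leaves implicit.
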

\begin{proof}
The first equality follows exactly as in \cite{17}. The second is obtained from third equation in (\ref{17}) when proceeding as in above case taking into account 
lemma \ref{l4.1} see third convergence therein.
\end{proof}

\begin{proposition}\label{p4.1}
For all $g=(g_{j})\in \left[\mathcal{C}^{\infty}(\Lambda)\cap L^{2}_{nv}(\Lambda)  \right]^{3}$ one has:
\begin{equation}\nonumber
\begin{array}{l}
\int_{\Lambda}\left[\int_{Z}\eta(x, \omega, z)\partial_{t}E_{1}(x,t,\omega)dz\right]\cdot g(\omega)d\mu +\\
\int_{\Lambda}\left[\int_{Z}J_{0}(x, t,\omega)dz\right]\cdot g(\omega)d\mu= \\
\int_{\Lambda}\left[\mathrm{curl}\int_{Z}H_{1}(x,t,\omega)dz  \right]\cdot g(\omega)d\mu +\int_{\Lambda}F(x,t,\omega)\cdot g(\omega)d\mu
\end{array}
\end{equation}
a.e. in Q.
\end{proposition}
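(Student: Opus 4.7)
The idea is to test the first equation of (\ref{1}) against an admissible profile built from the invariant function $g$ and pass to the stoch-2s limit using Lemma \ref{l4.1}. Let $\varphi \in \mathcal{D}(Q)$ be arbitrary and set $\Phi^{\varepsilon}(x,t,\omega) = \varphi(x,t)\, g(\omega)$. Since $g \in [L^{2}_{nv}(\Lambda)]^{3}$, one has $g(\tau(x/\varepsilon)\omega) = g(\omega)$, so any product of the form $\eta^{\varepsilon}\Phi^{\varepsilon}$ still has the structure $f^{\varepsilon}$ with $f(x,t,\omega,z) = \eta(x,\omega,z)\varphi(x,t)g(\omega)$ belonging to $L^{2}(Q; \mathcal{C}^{\infty}(\Lambda; \mathcal{C}_{per}(Z)))^{3}$, hence is an admissible test profile for the convergences (\ref{4.1}). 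Using $D_{\varepsilon} = \eta^{\varepsilon}E_{\varepsilon}$ (so $\partial_{t}D_{\varepsilon} = \eta^{\varepsilon}\partial_{t}E_{\varepsilon}$) together with the symmetry of $\eta$, I would rewrite the pairing of the first equation of (\ref{1}) with $\Phi^{\varepsilon}$ as
\begin{equation*}
\int_{Q\times\Lambda}\partial_{t}E_{\varepsilon}\cdot(\eta^{\varepsilon}\Phi^{\varepsilon})\,dxdtd\mu + \int_{Q\times\Lambda}J_{\varepsilon}\cdot\Phi^{\varepsilon}\,dxdtd\mu = \int_{Q\times\Lambda}\mathrm{curl}\,H_{\varepsilon}\cdot\Phi^{\varepsilon}\,dxdtd\mu + \int_{Q\times\Lambda}F_{\varepsilon}\cdot\Phi^{\varepsilon}\,dxdtd\mu.
\end{equation*}

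I would then pass to the limit $E \ni \varepsilon \to 0$ in each term. By the fourth convergence in (\ref{4.1}), the first integral tends to $\int_{Q\times\Lambda\times Z}\eta(x,\omega,z)\partial_{t}E_{1}\cdot\varphi g\,dxdtd\mu dz$; by the third convergence of (\ref{4.1}), the current term tends to $\int_{Q\times\Lambda\times Z}J_{0}\cdot\varphi g\,dxdtd\mu dz$; and the strong convergence assumed on $F_{\varepsilon}$ yields $\int_{Q\times\Lambda}F\cdot\varphi g\,dxdtd\mu$ for the source. For the curl integral I would invoke the $H_{\mathrm{curl}}$ stoch-2s proposition of Section 2 applied to $v_{\varepsilon}=H_{\varepsilon}$ with $g_{0}=1$ and $\psi=1$, obtaining $\int_{Q\times\Lambda}\mathrm{curl}(H_{0}+D_{\omega}h^{s}_{0})\cdot\varphi g\,dxdtd\mu$. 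Because $h^{\sigma}_{0}$ is $Z$-periodic, $\int_{Z}D_{y}h^{\sigma}_{0}\,dz = 0$, whence $H_{0}+D_{\omega}h^{s}_{0} = \int_{Z}H_{1}\,dz$, which matches the curl piece in the claimed identity.

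Collecting the four limits gives
\begin{equation*}
\int_{Q\times\Lambda}\left(\int_{Z}\eta\,\partial_{t}E_{1}\,dz + \int_{Z}J_{0}\,dz - \mathrm{curl}\int_{Z}H_{1}\,dz - F\right)\cdot\varphi(x,t)g(\omega)\,dxdtd\mu = 0
\end{equation*}
for every $\varphi \in \mathcal{D}(Q)$, and a standard localization in $(x,t)$ then delivers the proposition a.e. in $Q$. The delicate bookkeeping step is managing the correctors $D^{\omega_{0}}E^{s}_{1}$ and $\partial_{\tau}E^{\sigma}_{1}$ that a priori appear in the stoch-2s limit of $\partial_{t}E_{\varepsilon}$; however, because $g$ and $\eta$ are both independent of $(\omega_{0},\tau)$, these two correctors integrate out to zero (as encoded by the identities $D^{\omega_{0}}E^{s}_{1} = \partial_{\tau}E^{\sigma}_{1} = O$ of Lemma \ref{l4.1}), leaving only the expected macroscopic term $\partial_{t}E_{1}$ in the effective equation.
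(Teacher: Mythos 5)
Your proof is correct and takes essentially the same route as the paper, whose own proof simply says to repeat the argument of Proposition 4.3 in \cite{17} with the linear term $\sigma^{\varepsilon}_{ij}E^{j}_{\varepsilon}$ replaced by $J^{i}_{\varepsilon}$ and to invoke the third convergence of Lemma \ref{l4.1} for the current term --- exactly the substitution you carry out explicitly by testing the first equation of (\ref{1}) against $\varphi(x,t)g(\omega)$ and passing to the stoch-2s limit term by term. The only cosmetic point is that $\eta\varphi g$ lies in $\mathcal{C}\big(\overline{Q};L^{2}\big(\Lambda;L^{\infty}_{per}(Z)\big)\big)$ rather than in $L^{2}\big(Q;\mathcal{C}^{\infty}(\Lambda;\mathcal{C}_{per}(Z))\big)$, but the paper explicitly notes that the stoch-2s pairing (\ref{p}) still makes sense for that class, so your limit passage is legitimate.
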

\begin{proof}
It is sufficient to replace $\sigma^{\varepsilon}_{ij}E^{j}_{\varepsilon}$ by $\sigma^{\varepsilon}_{i}(x,.,.,E_{\varepsilon})=J^{i}_{\varepsilon}$ in the 
proof of proposition 4.3 in \cite{17} and next take into account lemma \ref{l4.1} see third convergence.
\end{proof}
\begin{proposition}\label{p4.2}
For all $g=(g_{j}) \in \left[\mathcal{C}^{\infty}(\Lambda)\cap L^{2}_{nv}(\Lambda)\right]^{3}$ we have
\[\int_{\Lambda}\left[\int_{Z}\mu(x, \omega,z)\partial_{t}H_{1}(x, t,\omega)dz  \right]\cdot g(\omega)d\mu = -\int_{\Lambda}\left[\mathrm{curl}\int_{Z}
E_{1}(x,t,\omega)dz  \right]\cdot g(\omega)d\mu\]
a.e. in Q
\end{proposition}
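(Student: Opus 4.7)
I would mirror the argument of Proposition~\ref{p4.1}, starting from the second equation of~\eqref{1}, i.e.\ $\mu^{\varepsilon}\partial_{t}H_{\varepsilon}=-\mathrm{curl}\,E^{\varepsilon}$ (since $\mu$ is $t$-independent). Test this identity against
\[
\Phi^{\varepsilon}(x,t,\omega,\omega_{0})=\varphi(x,t)\,g(\omega)\,g_{0}\!\left(\hbox{\LARGE$\tau$}_{0}\!\left(\tfrac{t}{\varepsilon}\right)\omega_{0}\right)\psi\!\left(\tfrac{t}{\varepsilon^{2}}\right),
\]
with $\varphi\in\mathcal{D}(Q)^{3}$, $g\in[\mathcal{C}^{\infty}(\Lambda)\cap L^{2}_{nv}(\Lambda)]^{3}$, $g_{0}\in\mathcal{C}^{\infty}(\Lambda_{0})$, $\psi\in\mathcal{C}_{per}(\Theta)$, and integrate over $Q\times\Lambda\times\Lambda_{0}$. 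The key point is that $\mu^{\varepsilon}\Phi^{\varepsilon}$ is the $\varepsilon$-sampling of the admissible stoch-2s test function $\mu(x,\omega,z)\varphi(x,t)g(\omega)g_{0}(\omega_{0})\psi(\tau)$, of class $\mathcal{C}(\overline{Q};L^{2}(\Lambda\times\Lambda_{0};L^{\infty}_{per}(Y)))$, by the regularity hypothesis on~$\mu$.

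Passing to the limit $E\ni\varepsilon\to 0$ with Lemma~\ref{l4.1} on the left (convergence of $\partial_{t}H_{\varepsilon}$) and the Section~2 curl-convergence statement on the right (whose hypothesis $g\in L^{2}_{nv}(\Lambda)$ is precisely what we impose), the left-hand side becomes
\[
\int\!\!\int\!\!\int\!\!\int_{Q\times\Lambda\times\Lambda_{0}\times Y}\!\big(\partial_{t}\overline{H}_{1}+D^{\omega_{0}}\overline{H}_{1}^{s}+\partial_{\tau}\overline{H}_{1}^{\sigma}\big)\cdot\mu(x,\omega,z)\,\varphi\, g\, g_{0}\,\psi,
\]
and the right-hand side becomes
\[
-\int\!\!\int\!\!\int\!\!\int_{Q\times\Lambda\times\Lambda_{0}\times Y}\!\mathrm{curl}\big(\overline{E}_{0}+D_{\omega}\overline{e}_{0}^{s}\big)\cdot\varphi\, g\, g_{0}\,\psi.
\]
Specialising to $\psi\equiv 1$ and $g_{0}\equiv 1$ collapses the $(\omega_{0},\tau)$-integrations: $\overline{H}_{1}$ is $\hbox{\LARGE$\tau$}_{0}$-invariant in~$\omega_{0}$ and $\tau$-independent, so its $(\omega_{0},\tau)$-average equals $H_{1}$; the corrector contributions vanish by $\int_{\Lambda_{0}}D^{\omega_{0}}\overline{H}_{1}^{s}\,d\mu^{0}=0$ (duality against the constant~$1$ under $\mu^{0}$-invariance) and $\int_{\Theta}\partial_{\tau}\overline{H}_{1}^{\sigma}\,d\tau=0$ (periodicity). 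On the right, $\overline{E}_{0}+D_{\omega}\overline{e}_{0}^{s}$ has no $z$-dependence and $\int_{Z}D_{z}e_{0}^{\sigma}\,dz=0$ for $e_{0}^{\sigma}\in H^{1}_{\#}(Z)$, so $E_{0}+D_{\omega}e_{0}^{s}=\int_{Z}E_{1}\,dz$. Rearranging and invoking arbitrariness of $\varphi$ yields the announced pointwise identity a.e.\ in~$Q$.

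The main technical obstacle is the stoch-2s handling of the product $\mu^{\varepsilon}\partial_{t}H_{\varepsilon}$: because $\mu$ is only $L^{\infty}$ in $(\omega,z)$ rather than smooth, one must enlarge the stoch-2s test-function class and run a density argument as in Lemma~\ref{l4.1}. Once this is in place, the remaining steps amount to bookkeeping of which mean-zero correctors survive the $(\omega_{0},\tau)$-averaging; this parallels the role of Proposition~\ref{p4.1}'s treatment of $J_{\varepsilon}$, with the simpler linear term $-\mathrm{curl}\,E^{\varepsilon}$ here replacing the nonlinear conductivity.
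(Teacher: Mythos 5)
Your proposal is correct and follows essentially the same route as the paper, which simply defers to Proposition 4.4 of the linear companion paper [17]: test the second Maxwell equation against $\varphi\otimes g$ (sampled along the dynamical system, which changes nothing since $g$ is invariant), pass to the stoch-2s limit using the convergences of Lemma 4.1 for $\partial_{t}H_{\varepsilon}$ and the $H_{\mathrm{curl}}$ compactness proposition of Section 2 for $\mathrm{curl}\,E_{\varepsilon}$, and let the mean-zero correctors drop out under the $(\omega_{0},\tau,z)$-averaging. This is exactly the computation the paper itself re-runs inside the proof of Lemma 4.5, so no further comment is needed.
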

\begin{proof}
Proceed exactly as in \cite{17} proposition 4.4.
\end{proof}

\begin{proposition}\label{p4.3}
$E_{1},\; H_{1} \in C\left(\left[0, T  \right]; H_{\mathrm{curl}}\left(\Omega; L^{2}\left(\Lambda; L^{2}_{per}(Z)\right)  \right)  \right)$ with 
$$\int_{\Lambda}\left[\gamma \wedge \widetilde{E_{1}}(x,t, \omega)\right]\cdot g(\omega)d\mu=0 \; a.e.\; on\; \partial\Omega \times (0,T),$$
for all $g\in \left[\mathcal{C}^{\infty}(\Lambda)\cap L^{2}_{nv}(\Lambda)  \right]^{3},$ and 
\begin{equation}\nonumber
\begin{array}{l}
E_{1}(x,z,0,\omega)=E^{0}(x,\omega) \;a.e.\; in \Omega\times\Lambda\times Z,\\
E_{1}(x, z,0,\omega)=H^{0}(x,\omega)\;a.e.\; in \Omega\times\Lambda \times Z,
\end{array}
\end{equation}
where $(\widetilde{E^{j}_{1}})(x,t,\omega)=\int_{Z}E^{j}_{1}(x,t,\omega)dz$, and $\widetilde{E_{1}}=\big(\widetilde{E^{j}_{1}} \big)$
\end{proposition}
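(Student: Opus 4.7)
The statement bundles three independent assertions: (a) $E_1, H_1 \in \mathcal{C}([0,T]; H_{\mathrm{curl}}(\Omega; L^{2}(\Lambda; L^{2}_{per}(Z))))$; (b) the boundary trace identity for $\widetilde{E_1}$; and (c) the initial-data identification for $E_1$ and $H_1$. The plan is to treat them in this order.

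For (a), Lemma \ref{l4.1} already provides $\partial_t E_1, \partial_t H_1 \in L^2(Q \times \Lambda; L^2_{per}(Z))^3$, so the task is to secure $L^2$-in-time $H_{\mathrm{curl}}$-regularity in $x$ and then conclude continuity in $t$ via a classical Lions--Magenes-type embedding. Using the decomposition $E_1 = E_0 + D_\omega e_0^s + D_y e_0^\sigma$ from Lemma \ref{l4.1}, the $\mathrm{curl}_x$-regularity is collected summand by summand: the bound for $E_0$ is immediate since $E_0 \in L^2(0,T; H_{\mathrm{curl}}(\Omega))$; for $D_\omega e_0^s$ one uses $e_0^s \in H_{\mathrm{curl}}(Q; H^1_\#(\Lambda))$ together with $\mathrm{curl}_x D_\omega = D_\omega \mathrm{curl}_x$; and the delicate corrector $\mathrm{curl}_x D_y e_0^\sigma$ is recovered by averaging the stoch-2s curl-convergence result of Section 2 over $\Lambda_0 \times \Theta$ and combining with the $z$-divergence-free identity of Lemma \ref{l4.2}. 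The argument for $H_1$ is identical.

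For (b), fix $\varphi \in \mathcal{C}^1(\overline{\Omega})^3$, $\psi \in \mathcal{D}(0, T)$ and $g \in [\mathcal{C}^\infty(\Lambda) \cap L^2_{nv}(\Lambda)]^3$. Since $\gamma \wedge E_\varepsilon = 0$ on $\partial\Omega$, integration by parts in $x$ gives, for every $\varepsilon > 0$,
\begin{equation}\nonumber
\int_0^T\!\!\int_\Omega\!\!\int_\Lambda \bigl[\mathrm{curl}\, E_\varepsilon \cdot \varphi\, g - E_\varepsilon \cdot \mathrm{curl}(\varphi)\, g\bigr] \psi \, dx d\mu dt = 0.
\end{equation}
I pass to the limit $E \ni \varepsilon \to 0$ using (\ref{4.1}) for the second integral and the stoch-2s curl-convergence statement of Section 2 (with $g_0 \equiv 1$ in $\Lambda_0$ and the $\Theta$-test function $\equiv 1$, averaged over $\Lambda_0 \times \Theta$) for the first. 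The $\tau$-invariance of $g$ kills the $D_\omega e_0^s$-contribution after integration by parts in $\omega$, and $Z$-periodicity kills the $D_y e_0^\sigma$-piece after $z$-averaging, so the limit reduces to
\begin{equation}\nonumber
\int_0^T\!\!\int_\Omega\!\!\int_\Lambda \bigl[\mathrm{curl}\, \widetilde{E_1} \cdot \varphi - \widetilde{E_1} \cdot \mathrm{curl}(\varphi)\bigr] g \psi \, dx d\mu dt = 0,
\end{equation}
which by Stokes equals $\int_0^T \int_{\partial\Omega} \int_\Lambda (\gamma \wedge \widetilde{E_1}) \cdot \varphi g \psi\, d\sigma d\mu dt$; arbitrariness of $\varphi$ and $\psi$ yields the desired trace identity.

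For (c), I test the weak form of (\ref{1}) against $v^\varepsilon \psi(t)$, where $v \in \mathcal{D}(\Omega) \otimes \mathcal{C}^\infty(\Lambda) \otimes \mathcal{C}^\infty_{per}(Z)$ and $\psi \in \mathcal{C}^1([0,T])$ satisfies $\psi(0)=1$, $\psi(T)=0$, and integrate by parts in $t$; the $t=0$ boundary term equals the inner product with $U^\varepsilon_0$, which converges strongly to $U^0$ by hypothesis. Performing the same time-integration-by-parts inside the two-scale limit equation and using the continuity from (a) to legitimize the pointwise-in-$t$ evaluation, matching the two expressions identifies $E_1(x,z,0,\omega) = E^0(x,\omega)$ and $H_1(x,z,0,\omega) = H^0(x,\omega)$ a.e.\ in $\Omega \times \Lambda \times Z$. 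The main obstacle is (a): the $L^2(Q \times \Lambda; H^1_\#(Z))$-setting for $e_0^\sigma$ supplied by Lemma \ref{l4.1} does not by itself deliver $x$-regularity for the $D_y e_0^\sigma$-corrector, so recovering the $H_{\mathrm{curl}}$-estimate requires exploiting the divergence-free identities of Lemma \ref{l4.2} and Propositions \ref{p4.1}--\ref{p4.2}; once this is in place, steps (b) and (c) are integration-by-parts arguments on appropriately chosen test functions.
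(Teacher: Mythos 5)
Your parts (b) and (c) are the right arguments and, as far as one can tell, coincide with what the paper actually does: the paper's own proof is a one-line deferral to Proposition 4.5 of \cite{17} (the linear-conductivity case), with the sole modification that the term coming from $\sigma^{\varepsilon}_{ij}E^{j}_{\varepsilon}$ is replaced by the stoch-2s limit of $\sigma^{\varepsilon}(\cdot,E_{\varepsilon})=J_{\varepsilon}$; your Green-formula argument with $\varphi\in\mathcal{C}^{1}(\overline{\Omega})^{3}$ and invariant $g$ for the tangential trace, and your integration by parts in $t$ against $\psi$ with $\psi(0)=1$, $\psi(T)=0$ combined with the strong convergence $E^{0}_{\varepsilon}\to E^{0}$, are exactly the standard route that proof follows. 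Note also that (c) does not really need the full strength of (a): Lemma \ref{l4.1} already gives $\overline{E}_{1}\in H^{1}\big(0,T;L^{2}(\Omega\times\Lambda;L^{2}_{nv}(\Lambda_{0};L^{2}_{per}(Z)))\big)$, hence $E_{1}\in\mathcal{C}\big([0,T];L^{2}(\Omega\times\Lambda\times Z)\big)$, which suffices to evaluate at $t=0$.

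The genuine gap is in your part (a). First, the embedding you invoke does not deliver the conclusion: $L^{2}\big(0,T;H_{\mathrm{curl}}\big)\cap H^{1}\big(0,T;L^{2}\big)$ embeds (Lions--Magenes) into $\mathcal{C}\big([0,T];L^{2}\big)$ (or into $\mathcal{C}_{w}\big([0,T];H_{\mathrm{curl}}\big)$), not into $\mathcal{C}\big([0,T];H_{\mathrm{curl}}\big)$; to get norm-continuity with values in $H_{\mathrm{curl}}$ you would also need $\partial_{t}\,\mathrm{curl}\,E_{1}\in L^{2}$, i.e.\ control of second time derivatives of the fields (which is where the hypotheses on $\partial_{t}F_{\varepsilon}$ and $\partial^{2}_{t}F_{\varepsilon}$ enter), and your sketch never produces this. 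Second, your treatment of the corrector $D_{y}e^{\sigma}_{0}$ does not close: Proposition 2.3 only asserts $v_{0}+D_{\omega}v^{s}_{0}\in H_{\mathrm{curl}}$ and identifies the limit of $\mathrm{curl}\,v_{\varepsilon}$ with $\mathrm{curl}(v_{0}+D_{\omega}v^{s}_{0})$ — it gives no $x$-curl control whatsoever on the $D_{z}v^{\sigma}_{0}$ piece — and the $z$-divergence identities of Lemma \ref{l4.2} constrain $\mathrm{div}_{z}$ of $\mu H_{1}$ and of $\eta\partial_{t}E_{1}+J_{0}$, which is information in the wrong variable and of the wrong type to bound $\mathrm{curl}_{x}D_{z}e^{\sigma}_{0}$. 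As it stands, "averaging the curl-convergence result and combining with Lemma \ref{l4.2}" is not an argument; either the $H_{\mathrm{curl}}$ membership must be interpreted for $\widetilde{E_{1}}=E_{0}+D_{\omega}e^{s}_{0}$ (for which Lemma \ref{l4.1} already supplies it), or a separate argument for the $x$-regularity of $e^{\sigma}_{0}$ is required, and you have not provided one.
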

\begin{proof}
We proceed as in \cite{17} proof of proposition 4.5, by replacing in (4.20) $G_{1}$ by $G'_{1}$ where $G'_{1}$ is the limit in the sense of (4.11 therein) of 
$\eta* \times \sigma\left(E_{\varepsilon}\right)=\eta*\times J^{\varepsilon}$.
\end{proof}
To identify $J_{0}$ we make use of some preliminary results.
\begin{lemma}\label{l4.4}
For $g=(g_{j})\in \big[\mathcal{C}^{\infty}(\Lambda)\cap L^{2}_{nv}(\Lambda)\big]^{3}$ one has:
\begin{equation}\nonumber
\begin{array}{l}
\int_{\Omega\times\Lambda\times Z}\big[\eta_{ij}(x,\omega,z)\partial_{t}\big(E^{j}_{0}(x,t)+\partial^{\omega}_{j}e^{s}_{0}(x,\omega,t)+\partial_{yj}e^{\sigma}_{0}
(x,\omega,z,t)\big)+J_{0i}(x,\omega,t) \big]E^{i}_{0}(x,t)g_{i}(\omega)dxd\mu dzdt\\ =
\int^{T}_{0}\!\!\int_{\Omega\times\Lambda}\left(\mathrm{curl}(H_{0}+D_{\omega}h^{s}_{0}) \right)_{i}(x,\omega,t)E^{i}_{0}(x,t)g_{i}(\omega)dxd\mu
dt  \\ 
 \;\;\; \; + \int^{T}_{0}\!\!\int_{\Omega\times\Lambda}F_{i}(x,\omega,t)\left(E^{i}_{0}(x,t)\right)g_{i}(\omega)dxd\mu dt.
\end{array}
\end{equation}

\end{lemma}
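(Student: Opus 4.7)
\textbf{Proof plan for Lemma \ref{l4.4}.} The identity is the variational (stoch-2s) limit of the first equation of (\ref{1}) tested against a product function. My plan is to start from the first equation of system (\ref{1}), namely
\[
\partial_{t}D_{\varepsilon}+J_{\varepsilon}=\mathrm{curl}\,H^{\varepsilon}+F^{\varepsilon},
\]
integrate its scalar product with a test function of the form $\Phi^{\varepsilon}(x,t,\omega)=\varphi(x,t)g(\omega)$, where $\varphi=(\varphi_{i})\in\mathcal{D}(Q)^{3}$ and $g=(g_{j})\in[\mathcal{C}^{\infty}(\Lambda)\cap L^{2}_{nv}(\Lambda)]^{3}$, over $Q\times\Lambda$. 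Because $g$ is $\tau$-invariant, $\Phi^{\varepsilon}$ has no $\varepsilon$-oscillation in the $\omega$-slot, which is exactly why the right-hand side of the lemma carries no $Z$-integral.

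The core of the proof is the passage to the limit along the fundamental sequence $E$ of Lemma \ref{l4.1}. For the $\partial_{t}D_{\varepsilon}$ term I write $\partial_{t}D_{\varepsilon}^{i}=\eta_{ij}^{\varepsilon}\partial_{t}E_{\varepsilon}^{j}$ (since $\eta$ is $t$-independent), and view $h_{j}(x,t,\omega,z)=\eta_{ij}(x,\omega,z)\varphi_{i}(x,t)g_{i}(\omega)$ as an admissible test function for weak stoch-2s convergence (by hypothesis, $\eta_{ij}\in\mathcal{C}(\overline{\Omega};L^{\infty}(\Lambda;L^{\infty}_{per}(Z)))$, and $\varphi\otimes g$ is smooth). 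Applying the stoch-2s convergence $\partial_{t}E_{\varepsilon}\to\partial_{t}E_{1}$ from Lemma \ref{l4.1} yields
\[
\int_{Q\times\Lambda}\eta_{ij}^{\varepsilon}\partial_{t}E_{\varepsilon}^{j}\varphi_{i}g_{i}\,dxd\mu dt\;\longrightarrow\;\int_{Q\times\Lambda\times Z}\eta_{ij}\partial_{t}E_{1}^{j}\varphi_{i}g_{i}\,dxd\mu dzdt.
\]
For $\int J_{\varepsilon}\cdot\Phi^{\varepsilon}$ I use the third convergence of (\ref{4.1}) to obtain the limit $\int_{Q\times\Lambda\times Z}J_{0i}\varphi_{i}g_{i}$. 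For $\int\mathrm{curl}\,H^{\varepsilon}\cdot\Phi^{\varepsilon}$ I invoke Proposition 2.3 (the curl identification for bounded $H_{\mathrm{curl}}$-sequences), which—using that $g\in L^{2}_{nv}(\Lambda)$—gives the limit $\int_{Q\times\Lambda}[\mathrm{curl}(H_{0}+D_{\omega}h^{s}_{0})]_{i}\varphi_{i}g_{i}$ with no $Z$-average needed since $H_{0}+D_{\omega}h^{s}_{0}$ is $z$-independent. Finally, $\int F^{\varepsilon}\cdot\Phi^{\varepsilon}\to\int F_{i}\varphi_{i}g_{i}$ follows from the strong convergence of $F_{\varepsilon}$ assumed in Section 1.

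It remains to replace the smooth test function $\varphi$ by $E_{0}$. Since Proposition \ref{p4.3} gives $E_{1}\in\mathcal{C}([0,T];H_{\mathrm{curl}}(\Omega;L^{2}(\Lambda;L^{2}_{per}(Z))))$, it follows that $E_{0}=\iint_{\Lambda_{0}\times\Theta}\overline{E}_{0}\,d\mu^{0}d\tau\in\mathcal{C}([0,T];H_{\mathrm{curl}}(\Omega))$, and in particular $E_{0}\in L^{2}(0,T;L^{2}(\Omega))^{3}$. Approximating $E_{0}$ by a sequence $(\varphi^{k})\subset\mathcal{D}(Q)^{3}$ (strongly in the relevant norm) and using the a priori bounds of Lemma \ref{l3.8} together with the boundedness of $\eta$, $J_{0}$ and $F$ to pass to the limit in $k$ delivers the stated identity. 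The main obstacle is step (a)—verifying that $\eta_{ij}\varphi_{i}g_{i}$ is genuinely an admissible stoch-2s test function so that the limit of $\int\eta_{ij}^{\varepsilon}\partial_{t}E_{\varepsilon}^{j}\varphi_{i}g_{i}$ is the one written above; this is where the regularity assumption $\eta_{ij}\in\mathcal{C}(\overline{\Omega};L^{\infty}(\Lambda;L^{\infty}_{per}(Z)))$ and the $\tau$-invariance of $g$ are crucially used.
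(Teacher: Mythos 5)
Your proposal is correct and follows essentially the same route as the paper: test the first Maxwell equation against $\varphi\otimes g$ with $g$ being $\tau$-invariant, pass to the stoch-2s limit term by term using Lemma \ref{l4.1} and the curl identification, and then approximate $E_{0}$ by smooth functions $(\beta\otimes\varphi)^{l}$ in $L^{2}(Q)^{3}$. The only cosmetic difference is that the paper first integrates by parts in $t$ and moves the curl onto the test function (so that only the convergence of $E_{\varepsilon}$ and $H_{\varepsilon}$ themselves is needed, via Proposition \ref{p4.1}), whereas you use the stoch-2s limits of $\partial_{t}E_{\varepsilon}$ and $\mathrm{curl}\,H_{\varepsilon}$ directly; both are available from Lemma \ref{l4.1} and Proposition 2.3, so this changes nothing of substance.
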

\begin{proof}
One has already shown (see proposition \ref{p4.1}) that:
\begin{equation}\nonumber
\begin{array}{l}
-\int\!\!\int_{Q\times\Lambda}\eta^{\varepsilon}_{ij}\big(x, \hbox{\LARGE{$\tau$}}\big(\frac{x}{\varepsilon} \big)\omega, \frac{x}{\varepsilon^{2}}\big)
E^{j}_{\varepsilon}(x,t,\omega)\beta'(t)\varphi(x)g_{i}\big(\hbox{\LARGE{$\tau$}}\big(\frac{x}{\varepsilon} \big)\omega  \big)dxdtd\mu\\
+\int\!\!\int_{Q\times\Lambda}\sigma_{i}\big(x,\hbox{\LARGE{$\tau$}}\big(\frac{x}{\varepsilon}\big)\omega,\frac{x}{\varepsilon^{2}}, E_{\varepsilon}(x,t,\omega)\big)\beta(t)\varphi(x)g_{i}\big(\hbox{\LARGE{$\tau$}}\big(\frac{x}{\varepsilon}\big)\omega  \big)dxdtd\mu\\
=-\int\!\!\int_{Q\times\Lambda}H_{\varepsilon}(x,t,\omega)\cdot\beta(t)\mathrm{curl}\left[\varphi\otimes g\right]\big(x,\hbox{\LARGE{$\tau$}}\big(\frac{x}{\varepsilon}\big)\omega\big)dxdtd\mu\\
+\int\!\!\int_{Q\times\Lambda}F_{\varepsilon}(x,t,\omega)\cdot\beta(t)\varphi(x)g\big(\hbox{\LARGE{$\tau$}}\big(\frac{x}{\varepsilon}\big)\omega\big)dxdtd\mu,\; \beta\in\mathcal{D}\big( \left]0,T \right[\big),\; \varphi\in \mathcal{D}(\Omega).
\end{array}
\end{equation}
Let a sequence $\big((\beta\otimes\varphi)^{l} \big)_{l\in \mathbb{N}}\subset \left[\mathcal{D}(\Omega)\otimes \mathcal{D}\left(\left]0,T\right[ \right)  \right]^{3}$, converging to $E_{0}$ in $L^{2}(Q)^{3}$. Replace $\beta\varphi$ by $\big((\beta\otimes\varphi)^{l} \big)_{i}$ in the preceding equality, take limits first on $\varepsilon$, then on $l$ and the result follows.
\end{proof}
\begin{lemma}\label{l4.5}
One also have for $g=(g_{j})\in \left[\mathcal{C}^{\infty}(\Lambda)\cap L^{2}_{nv}(\Lambda)\right]^{3}$, \\
$$\int^{T}_{0}\!\!\int_{\Omega\times\Lambda}F_{i}(x,\omega,t)E^{i}_{0}(x,t)g_{i}(\omega)dxd\mu dt-\int_{\Omega\times\Lambda\times Z}J_{0 i}(x,\omega,z,t)E_{i}(x,t)g_{i}(\omega)dxd\mu dzdt$$ 
$$-\int_{\Omega\times\Lambda\times Z}\eta_{ij}(x,\omega\times z)\times \partial_{t}\big(E_{j}(x)+\partial^{\omega}_{j}e^{s}(x,\omega,t)+\partial_{zj}e^{\sigma}(x,\omega,z,t)\big)\times E^{i}_{0}(x,t)g_{i}(\omega)dxd\mu dzdt$$ 
$$- \int^{T}_{0}\!\!\int_{\Omega\times\Lambda\times Z}\mu_{ij}(x, \omega, z)\times \partial_{t}\big(H^{j}_{0}(x,t)+\partial^{\omega}_{j}h^{s}_{0}(x, \omega,t)+\partial_{zj}h^{\sigma}_{0}(x,\omega, z,t)\big)\times H^{i}_{0}(x,t)g_{i}(\omega)dxd\mu dzdt=0 $$
\end{lemma}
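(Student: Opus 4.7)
The plan is to derive a magnetic-field counterpart of Lemma \ref{l4.4} and combine it with Lemma \ref{l4.4} itself, then verify that the resulting curl terms cancel.

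First, starting from Proposition \ref{p4.2} and following the proof strategy of Lemma \ref{l4.4} verbatim, I would test the $\varepsilon$-level second Maxwell equation $\partial_t B_\varepsilon + \mathrm{curl}\, E_\varepsilon = 0$ against $\beta(t)\varphi(x)g_i(\tau(x/\varepsilon)\omega)$ (which equals $\beta(t)\varphi(x)g_i(\omega)$ by property (vi) and the $\tau$-invariance of $g_i$), integrate by parts in $t$ and in $x$ (the spatial boundary term vanishes since $\varphi\in\mathcal{D}(\Omega)$), pass to the stoch-2s limit via Lemma \ref{l4.1}, and then approximate $\beta\otimes\varphi$ in $L^2(Q)^3$ by a test sequence converging to $H_0$. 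This yields the magnetic counterpart
\[
\int_{\Omega\times\Lambda\times Z}\mu_{ij}\partial_t\big(H_0^j+\partial^\omega_j h_0^s+\partial_{zj}h_0^\sigma\big) H_0^i g_i\, dx\, d\mu\, dz\, dt = -\int_0^T\!\!\int_{\Omega\times\Lambda}\big(\mathrm{curl}(E_0+D_\omega e_0^s)\big)_i H_0^i g_i\, dx\, d\mu\, dt.
\]
Adding this identity to Lemma \ref{l4.4} and rearranging, the claim reduces to the Maxwell--Green type identity
\[
\int_0^T\!\!\int_{\Omega\times\Lambda}\Big[\big(\mathrm{curl}(H_0+D_\omega h_0^s)\big)_i E_0^i - \big(\mathrm{curl}(E_0+D_\omega e_0^s)\big)_i H_0^i\Big] g_i\, dx\, d\mu\, dt = 0.
\]

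To verify this identity, I would proceed in two steps. First, integration by parts in $\omega$: because $g_i \in L^2_{nv}(\Lambda)$ implies $D^\omega_k g_i = 0$ (property (vi)), we have $\int_\Lambda D^\omega_k f\cdot g_i\, d\mu = -\int_\Lambda f D^\omega_k g_i\, d\mu = 0$ for every $f\in L^2(\Lambda)$. Commuting $\partial_{x_j}$ with the $\Lambda$-integration, this shows that the corrector contributions $D_\omega h_0^s$ and $D_\omega e_0^s$ make no contribution to the curls after integration against $g$. Second, the remaining part involves only the $\omega$-independent fields $E_0$ and $H_0$; Green's identity in $x$ reduces it to a boundary integral of $(E_0\times H_0)\cdot\nu$ on $\partial\Omega$, which vanishes because $\gamma\wedge E_0 = 0$ on $\partial\Omega$---a consequence of Proposition \ref{p4.3} applied with constant test vectors, using that $\int_\Lambda D_\omega e_0^s\, d\mu = 0$ so $\int_\Lambda \widetilde{E_1}\, d\mu = E_0$.

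The main obstacle is the bookkeeping in this Green-type cancellation: one must carefully track the Hadamard-like contraction with the vector test $g = (g_i)$ and exploit $\tau$-invariance to reduce to an $\omega$-averaged identity before applying Green's formula in $x$. Without the combined use of Proposition \ref{p4.3} (providing the boundary condition $\gamma\wedge E_0 = 0$) and property (vi) (killing the $D_\omega$ corrector contributions), neither the boundary term nor the corrector terms would vanish individually, and the two-scale limit passages from the $\varepsilon$-level energy identity would not close.
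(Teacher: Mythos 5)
Your proposal is correct and follows essentially the same route as the paper: the authors likewise derive the magnetic counterpart of Lemma \ref{l4.4} by re-running the proof of Proposition \ref{p4.2} with test functions converging to $H_{0}$, and then close the argument with the Green-type curl identity obtained from the weak boundary condition $\int_{\Lambda}[\gamma\wedge\widetilde{E_{1}}]\cdot g\,d\mu=0$ of Proposition \ref{p4.3}, tested against functions converging to $H_{0}+D_{\omega}h^{s}_{0}$. The only (immaterial) difference is that you eliminate the $D_{\omega}$-corrector contributions first via property (vi) and then apply Green's formula to $E_{0},H_{0}$ alone, whereas the paper keeps the correctors and lets the $\tau$-invariance of the test function do the same work at the final limit.
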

\begin{proof}
Let a sequence $\big((\beta \otimes \varphi)^{l}\big)_{l\in \mathbb{N}}\subset \left[\mathcal{D}(\Omega)\otimes \mathcal{D}(\left]0,T\right[)  \right]^{3}$
, converging to $H_{0}$ in $L^{2}(Q)^{3}$. Let us replace $\beta\varphi$ by $\big((\beta\otimes \varphi)^{l}\big)_{i}$ in the proof of proposition \ref{p4.2}. Taking limit on $\varepsilon$, one gets
$\int^{T}_{0}\!\!\int_{\Omega\times\Lambda\times Z}\mu_{ij}(x,\omega,z)\partial_{t}\left(H_{j}(x,t)+\partial^{\omega}_{j}h^{s}(x,\omega,t)+
\partial_{zj}h^{\sigma}(x,\omega,z,t)\right)\times \big((\beta\otimes \varphi)^{l}\big)_{i}g_{i}(\omega)dxd\mu dzdt=-\int^{T}_{0}\!\!\int_{\Omega\times\Lambda}\left(\mathrm{curl}(E_{0}+D_{\omega}e^{s}_{0}) \right)_{i}(x,t)\big((\beta\otimes \varphi)^{l} \big)_{i}g_{i}(\omega)dxd\mu dt,\; g\in \big[\mathcal{C}^{\infty}(\Lambda)\cap L^{2}_{nv}(\Lambda)\big]^{3}.$\\
Then taking limit on $l$ gives
\begin{equation}\nonumber
\begin{array}{c}
\int^{T}_{0}\!\!\int_{\Omega\times\Lambda\times Y}\mu_{ij}(x,\omega,z)\partial_{t}\big(H_{j}(x,t)+\partial_{j}^{\omega}h^{s}(x,\omega,t)+\partial_{z_{j}}h^{\sigma}(x,\omega,z,t)\big)\times H^{i}_{0}(x,t)g(\omega)dxd\mu dzdt=\\
-\int^{T}_{0}\!\!\int_{\Omega\times\Lambda}\big(\mathrm{curl}\left(E_{0}+D_{\omega}e^{s}_{0}\right)\big)_{i}(x,t)H^{i}_{0}(x,t)g(\omega)dxd\mu dt;\;
g\in\mathcal{C}^{\infty}(\Lambda)\cap L^{2}_{nv}(\Lambda).\\
\int_{\Lambda}\big[\gamma \wedge \tilde{E_{1}}(x,t,\omega)\big]\cdot g(\omega)d\mu=0 \hbox{ a.e. on }  \partial\Omega \times (0, T)\hbox{ leads to }\\
\int\!\!\int_{Q\times \Lambda}\mathrm{curl}\;\tilde{E_{1}}(x,t,\omega)\cdot\big[v_{1}^{l}b^{l}\otimes g^{l}\big](x,\omega)dxd\mu dt-\\
\int\!\!\int_{Q\times \Lambda}\tilde{E_{1}}(x,t,\omega)\cdot \mathrm{curl}\big[v_{1}^{l}b^{l}\otimes g^{l}\big](x,\omega)dxd\mu dt=0,
\end{array}
\end{equation}
taking $v_{1}^{l}b^{l}\otimes g^{l}\in \big[\mathcal{D}(\Omega)\otimes \mathcal{D}(\left]0,T\right[)\otimes \left[\mathcal{C}^{\infty}(\Lambda)\cap  I^{2}_{nv}(\Lambda)\right]  \big]^{3}$ converging to $H_{0}+D_{\omega}h^{s}_{0}$; at the limit one obtain:
\begin{equation}\nonumber
\begin{array}{l}
-\int^{T}_{0}\!\!\int_{\Omega\times\Lambda}\left(\mathrm{curl}(E_{0}+D_{\omega}e^{s}_{0})\right)(x,\omega,t)\cdot \left[H_{0}+D_{\omega}h^{s}_{0}\right]dxd\mu dt\\
+\int^{T}_{0}\!\!\int_{\Omega\times\Lambda}\left(E_{0}+D_{\omega}e^{s}_{0}\right)(x,\omega,t)\cdot \left(\mathrm{curl}\left[H_{0}+D_{\omega h^{s}_{0}}\right]\right)dxd\mu dt=0;  
\end{array}
\end{equation}
and desire result follows.
\end{proof}
\begin{lemma}\label{l4.6}
$J_{0}(x,\omega,z,t)=\sigma(x,\omega,z,E_{0}(x,\omega,t)+D^{\omega}e^{s}_{0}(x,\omega,t)+D_{z}e^{\sigma}_{0}(x,\omega,z,t))$.
\end{lemma}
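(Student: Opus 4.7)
The plan is the classical Minty monotonicity trick adapted to the stoch-2s framework. By the strong monotonicity \eqref{7}, for every admissible test function of the form supplied by Lemma~\ref{l3.2}, namely $\Psi^\varepsilon_i = \varphi_i + \varepsilon\partial_{x_i}\phi^\varepsilon + \varepsilon^2\partial_{x_i}\psi^\varepsilon + r\theta_i^\varepsilon$, with limit profile $\Psi_i = \varphi_i + \partial^\omega_i\phi + \partial_{z_i}\psi + r\theta_i$, one has
\[
X_\varepsilon := \int_0^T\!\!\!\int_{\Omega\times\Lambda}\bigl(\sigma^\varepsilon(-,E_\varepsilon)-\sigma^\varepsilon(-,\Psi^\varepsilon)\bigr)\cdot(E_\varepsilon - \Psi^\varepsilon)\,dx\,d\mu\,dt \;\geq\; 0.
\]
I would expand $X_\varepsilon$ into four terms and pass to the limit in the three ``cross'' terms using the strong $L^2$-convergence $\sigma^\varepsilon(-,\Psi^\varepsilon)\to\sigma(\cdot,\Psi)$ from Lemma~\ref{l3.2}, together with the weak stoch-2s convergences $E_\varepsilon\rightharpoonup E_1 := E_0+D^\omega e_0^s+D_z e_0^\sigma$ and $J_\varepsilon\rightharpoonup J_0$ supplied by Lemma~\ref{l4.1}; this identifies those three limits as $\int J_0\cdot\Psi$, $\int\sigma(\cdot,\Psi)\cdot E_1$ and $\int\sigma(\cdot,\Psi)\cdot\Psi$ on $Q\times\Lambda\times Z$.

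The main obstacle is the diagonal term $\int J_\varepsilon\cdot E_\varepsilon$, in which both factors converge only weakly. To handle it I would derive an energy identity at level $\varepsilon$: pairing the first equation of \eqref{1} with $E_\varepsilon$, the second with $H_\varepsilon$, integrating over $\Omega\times\Lambda\times(0,T)$, and using the boundary condition $\gamma\wedge E_\varepsilon=0$ together with the symmetry of $\eta_\varepsilon$ and $\mu_\varepsilon$ to kill the curl pairings, gives
\[
\int_0^T\!\!\!\int_{\Omega\times\Lambda} J_\varepsilon\cdot E_\varepsilon\,dx\,d\mu\,dt \;=\; \int_0^T\!\!\!\int_{\Omega\times\Lambda} F^\varepsilon\cdot E_\varepsilon\,dx\,d\mu\,dt \;-\; \tfrac{1}{2}\Bigl[(D_\varepsilon(t),E_\varepsilon(t))+(B_\varepsilon(t),H_\varepsilon(t))\Bigr]_{t=0}^{t=T}.
\]
The strong convergence of $F_\varepsilon$, $E_\varepsilon^0$, $H_\varepsilon^0$ and the weak lower semicontinuity of the quadratic forms $v\mapsto(\eta_\varepsilon v,v)$ and $v\mapsto(\mu_\varepsilon v,v)$ at $t=T$ control the boundary contributions, and passing to $\limsup$ together with the limit energy identities encoded in Lemmas~\ref{l4.4} and~\ref{l4.5} produces the crucial upper bound $\limsup_\varepsilon\int J_\varepsilon\cdot E_\varepsilon \leq \int\!\!\int\!\!\int_{Q\times\Lambda\times Z} J_0\cdot E_1\,dx\,d\mu\,dz\,dt$.

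Collecting the four limits in $X_\varepsilon\geq 0$ then yields, for every admissible $\Psi$,
\[
\int_0^T\!\!\!\int_{\Omega\times\Lambda\times Z}\bigl(J_0 - \sigma(\cdot,\Psi)\bigr)\cdot(E_1 - \Psi)\,dx\,d\mu\,dz\,dt \;\geq\; 0.
\]
Since the limit profiles $\Psi$ of the above form are dense in the natural target space containing $E_1$, I may specialize $\Psi = E_1 + rv$ with $v$ an arbitrary smooth element of that space. Dividing by $r$ and letting $r\to 0^{+}$ and then $r\to 0^{-}$, while using the continuity of $\xi\mapsto\sigma(\cdot,\xi)$ from assumption (iv) (and Lemma~3.1 to pass the Nemytskii operator through the limit), gives $\int(J_0-\sigma(\cdot,E_1))\cdot v\,dx\,d\mu\,dz\,dt = 0$ for every such $v$. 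This forces the pointwise identification $J_0(x,\omega,z,t) = \sigma(x,\omega,z,E_0(x,\omega,t)+D^\omega e_0^s(x,\omega,t)+D_z e_0^\sigma(x,\omega,z,t))$, as claimed.
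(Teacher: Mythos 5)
Your proposal is correct and follows essentially the same route as the paper: the monotonicity inequality (\ref{7}) applied to the corrector-type test functions of Lemma \ref{l3.2}, an energy identity derived from (\ref{1}) to control the weak--weak product $\int J_{\varepsilon}\cdot E_{\varepsilon}$, and Minty's $r\to 0^{\pm}$ argument to identify $J_{0}$. The only organizational difference is that the paper folds the stored-energy terms $\tfrac12\int\eta^{\varepsilon}_{ij}(E^{j}_{\varepsilon}-v^{\varepsilon}_{j,m})(T)(E^{i}_{\varepsilon}-v^{\varepsilon}_{i,m})(T)\geq 0$ directly into the nonnegative expression (so positivity at $t=T$ is automatic and the combination $J^{i}_{\varepsilon}E^{i}_{\varepsilon}+\tfrac12\partial_{t}(\eta^{\varepsilon}_{ij}E^{j}_{\varepsilon}E^{i}_{\varepsilon}+\mu^{\varepsilon}_{ij}H^{j}_{\varepsilon}H^{i}_{\varepsilon})=F^{i}_{\varepsilon}E^{i}_{\varepsilon}$ is passed to the limit in one stroke via $\mathcal{A}(U,U)=0$), and it approximates the limit profiles by smooth $m$-indexed test functions before sending $r\to 0$, rather than stating a separate $\limsup$ bound on $\int J_{\varepsilon}\cdot E_{\varepsilon}$ -- but this is the same argument.
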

\begin{proof}
Define test functions as follow
\begin{equation}\nonumber
\begin{array}{c}
v^{\varepsilon}_{i,m}=E^{i}_{0m}+\varepsilon\partial_{xi}\left(e^{s}_{0,m}\right)^{\varepsilon}+\varepsilon^{2}\partial_{xi}\left(e^{\sigma}_{0,m} \right)^{\varepsilon}+r(e_{2,i})^{\varepsilon}\\
u^{\varepsilon}_{i,m}=H^{i}_{0m}+\varepsilon\partial_{xi}\left(h^{s}_{0,m} \right)^{\varepsilon}+\varepsilon^{2}\partial_{xi}\left(h^{\sigma}_{0,m} \right)^{\varepsilon}+r(h_{2,i})^{\varepsilon},
\end{array}
\end{equation}
where, $r$ a non zero real; $E_{0m}, H_{0m}\in \mathcal{D}(Q)^{3}$ converging strongly and respectively to $E_{0}\in L^{2}\big(0,T; L^{2}(\Omega)^{3} \big)$  and to $H_{0}\in L^{2}\big(0,T;L^{2}(\Omega)^{3}\big)$ when $\mathbb{N}\ni m\longrightarrow +\infty; h^{s}_{0,m}\in \mathcal{D}\left(\Omega\times\left]0,T\right[; \mathcal{C}^{\infty}(\Lambda)\right),\; h^{\sigma}_{0,m}\in \mathcal{D}\big(\Omega\times\left]0,T \right[;\big(\mathcal{C}^{\infty}(\Lambda); \mathcal{C}^{\infty}_{per}(Z)\big)  \big)$ converging strongly and respectively to $h^{s}_{0}\in L^{2}\big(Q; H^{1}_{\#}(\Lambda) \big), h^{\sigma}_{0}\in L^{2}\big(Q\times\Lambda; H^{1}_{\#}(Z)\big)$ when $\mathbb{N}\ni m\longrightarrow +\infty; \; 
e_{2,i},h_{2,i}\in \mathcal{C}\big(\overline{Q};\big(\mathcal{C}^{\infty}(\Lambda); \mathcal{C}^{\infty}_{per}(Z)\big)\big)$. It is clear that $v^{\varepsilon}_{i,m}, u^{\varepsilon}_{i,m}\in \mathcal{C}\left(\overline{Q}; \mathcal{C}^{\infty}(\Lambda)\right)$ are test functions strongly stochastic-two-scale converging respectively to $v_{i,m}$ and $u_{i,m}$. Note also that:
\begin{equation}\nonumber
\begin{array}{ll}
\partial_{xi}\left(h^{s}_{0,m}\right)^{\varepsilon}(x,\omega,t)&=\partial_{xi}h^{s}_{0,m}\big(x, \hbox{\LARGE{$\tau$}}\big(\frac{x}{\varepsilon}\big)\omega,t\big)+\frac{1}{\varepsilon}\partial^{\omega}_{i}h^{s}_{0,m}\big(x,\hbox{\LARGE{$\tau$}}\big(\frac{x}{\varepsilon}\big)\omega,t\big);\\
\partial_{xi}\big(e^{s}_{0,m}\big)^{\varepsilon}(x,\omega,t)&=\partial_{xi}e^{s}_{0,m}\big(x,\hbox{\LARGE{$\tau$}}\big(\frac{x}{\varepsilon}\big)\omega,t \big)+\frac{1}{\varepsilon}\partial_{i}^{\omega}e^{s}_{0,m}\big(x,\hbox{\LARGE{$\tau$}}\big(\frac{x}{\varepsilon}\big)\omega,t \big);\\
\partial_{xi}\left(h^{\sigma}_{0,m}\right)^{\varepsilon}(x,\omega,t)&=\partial_{xi}h^{\sigma}_{0,m}\big(x, \hbox{\LARGE{$\tau$}}\big(\frac{x}{\varepsilon}\big)\omega,\frac{x}{\varepsilon^{2}} ,t\big)+\frac{1}{\varepsilon}\partial^{\omega}_{i}h^{\sigma}_{0,m}\big(x,\hbox{\LARGE{$\tau$}}\big(\frac{x}{\varepsilon}\big)\omega,\frac{x}{\varepsilon^{2}},t\big)  \\ 
 &  +\frac{1}{\varepsilon^{2}}\partial_{zi}h^{\sigma}_{0,m}
\big(x, \hbox{\LARGE{$\tau$}}\big(\frac{x}{\varepsilon}\big)\omega, \frac{x}{\varepsilon^{2}},t \big);\\
\partial_{xi}\big(e^{\sigma}_{0,m}\big)^{\varepsilon}(x,\omega,t)&=\partial_{xi}e^{\sigma}_{0,m}\big(x,\hbox{\LARGE{$\tau$}}\big(\frac{x}{\varepsilon}\big)\omega, \frac{x}{\varepsilon^{2}},t \big)+\frac{1}{\varepsilon}\partial_{i}^{\omega}e^{\sigma}_{0,m}\big(x, \hbox{\LARGE{$\tau$}}
\big(\frac{x}{\varepsilon}\big)\omega,\frac{x}{\varepsilon^{2}},t \big)  \\       
&  +\frac{1}{\varepsilon^{2}}\partial_{zi}e^{\sigma}_{0,m}\big(x,\hbox{\LARGE{$\tau$}}\big(\frac{x}{\varepsilon}\big)\omega,\frac{x}{\varepsilon^{2}},t \big);
\end{array}
\end{equation}
then
\begin{equation}\nonumber
\begin{array}{ll}
\varepsilon\partial_{xi}\left(h^{s}_{0,m}\right)^{\varepsilon}(x,\omega,t)&=\varepsilon\partial_{xi}h^{s}_{0,m}\big(x, \hbox{\LARGE{$\tau$}}\big(\frac{x}{\varepsilon}\big)\omega,t\big)+\partial^{\omega}_{i}h^{s}_{0,m}\big(x,\hbox{\LARGE{$\tau$}}\big(\frac{x}{\varepsilon}\big)\omega,t\big);\\
\varepsilon\partial_{xi}\big(e^{s}_{0,m}\big)^{\varepsilon}(x,\omega,t)&=\varepsilon\partial_{xi}e^{s}_{0,m}\big(x,\hbox{\LARGE{$\tau$}}\big(\frac{x}{\varepsilon}\big)\omega,t \big)+\partial_{i}^{\omega}e^{s}_{0,m}\big(x,\hbox{\LARGE{$\tau$}}\big(\frac{x}{\varepsilon}\big)\omega,t \big);\\
\varepsilon^{2}\partial_{xi}\left(h^{\sigma}_{0,m}\right)^{\varepsilon}(x,\omega,t)&=\varepsilon^{2}\partial_{xi}h^{\sigma}_{0,m}\big(x, \hbox{\LARGE{$\tau$}}\big(\frac{x}{\varepsilon}\big)\omega,\frac{x}{\varepsilon^{2}} ,t\big)+\varepsilon\partial^{\omega}_{i}h^{\sigma}_{0,m}\big(x,\hbox{\LARGE{$\tau$}}\big(\frac{x}{\varepsilon}\big)\omega,\frac{x}{\varepsilon^{2}},t\big)  \\ 
&+\partial_{zi}h^{\sigma}_{0,m}\big(x, \hbox{\LARGE{$\tau$}}\big(\frac{x}{\varepsilon}\big)\omega, \frac{x}{\varepsilon^{2}},t \big);\\
\varepsilon^{2}\partial_{xi}\big(e^{\sigma}_{0,m}\big)^{\varepsilon}(x,\omega,t)&=\varepsilon^{2}\partial_{xi}e^{\sigma}_{0,m}\big(x,\hbox{\LARGE{$\tau$}}\big(\frac{x}{\varepsilon}\big)\omega, \frac{x}{\varepsilon^{2}},t \big)+\varepsilon\partial_{i}^{\omega}e^{\sigma}_{0,m}\big(x, \hbox{\LARGE{$\tau$}}\big(\frac{x}{\varepsilon}\big)\omega,\frac{x}{\varepsilon^{2}},t \big)  \\
& +\partial_{zi}e^{\sigma}_{0,m}\big(x,\hbox{\LARGE{$\tau$}}\big(\frac{x}{\varepsilon}\big)\omega,\frac{x}{\varepsilon^{2}},t \big);
\end{array}
\end{equation}
and it follows that when $\varepsilon \longrightarrow 0,$
\begin{equation}\nonumber
\begin{array}{l}
v^{\varepsilon}_{i,m}\longrightarrow E^{i}_{0m}+\partial_{i}^{\omega}e^{s}_{0,m}+\partial_{zi}e^{\sigma}_{0,m}+re_{2,i} \;\hbox{in}\; L^{2}
\left(Q\times\Lambda\times Z \right)\hbox{-strong stoch-2s;}\\
u^{\varepsilon}_{i,m}\longrightarrow H^{i}_{0m}+\partial_{i}^{\omega}h^{s}_{0,m}+\partial_{zi}h^{\sigma}_{0,m}+rh_{2,i} \;\hbox{in}\; L^{2}
\left(Q\times\Lambda\times Z \right)\hbox{-strong stoch-2s;}
\end{array}
\end{equation}
moreover, when $\varepsilon \longrightarrow 0$ and $m\longrightarrow 0,$
\begin{equation}\nonumber
\begin{array}{l}
v^{\varepsilon}_{i,m}\longrightarrow E^{i}_{0}+\partial_{i}^{\omega}e^{s}_{0}+\partial_{zi}e^{\sigma}_{0}+re_{2,i} \;\hbox{in}\; L^{2}
\left(Q\times\Lambda\times Z \right)\hbox{-strong stoch-2s;}\\
u^{\varepsilon}_{i,m}\longrightarrow H^{i}_{0}+\partial_{i}^{\omega}h^{s}_{0}+\partial_{zi}h^{\sigma}_{0}+rh_{2,i} \;\hbox{in}\; L^{2}
\left(Q\times\Lambda\times Z \right)\hbox{-strong stoch-2s.}
\end{array}
\end{equation}
We recall that:
$$F(T)=F(0)+\int^{T}_{0}F'(t)dt,$$
and set:
\begin{equation}\nonumber
\begin{array}{l}
F(T)=\frac{1}{2}\int_{\Omega\times\Lambda}\eta^{\varepsilon}_{ij}\left(E^{j}_{\varepsilon}-v^{\varepsilon}_{j,m} \right)\left(.,.,T \right)\left(E^{i}_{\varepsilon}-v^{\varepsilon}_{i,m} \right)\left(.,.,T \right)dxd\mu \geq \\
\frac{1}{2}c\|\left(E_{\varepsilon}-v^{\varepsilon}_{m} \right)\left(.,.,T \right)\|^{2}_{L^{2}(\Omega\times\Lambda)^{3}}\geq 0\\
F_{1}(T)=\frac{1}{2}\int_{\Omega\times\Lambda}\mu^{\varepsilon}_{ij}\left(H^{j}_{\varepsilon}-u^{\varepsilon}_{j,m} \right)\left(.,.,T \right)\left(H^{i}_{\varepsilon}-u^{\varepsilon}_{i,m}\right)\left(.,.,T \right)dxd\mu\geq \\
\frac{1}{2}c\|\left(H_{\varepsilon}-u^{\varepsilon}_{m}\right)\left(.,.,T\right)\|^{2}_{L^{2}(\Omega\times\Lambda\times ]0,T[ )^{3}}\geq 0,
\end{array}
\end{equation}
$\sigma$ being monotone we have:
\begin{equation}\nonumber
\bigg(\sigma^{\varepsilon}_{i}\bigg(x, \hbox{\LARGE{$\tau$}}\bigg(\frac{x}{\varepsilon}\bigg)\omega, \frac{x}{\varepsilon^{2}}, E_{\varepsilon}\bigg)-
\sigma^{\varepsilon}_{i}\bigg(x,\hbox{\LARGE{$\tau$}}\bigg(\frac{x}{\varepsilon}\bigg)\omega, \frac{x}{\varepsilon^{2}}, v^{2}_{m}\bigg) \bigg)\big(E^{i}_{\varepsilon}-v^{\varepsilon}_{i,m}\big)\geq 0;
\end{equation}
one deduces that;
\begin{equation}\nonumber
\begin{array}{l}
0\leq \int^{T}_{0}\!\!\int_{\Omega\times\Lambda}\big(\sigma^{\varepsilon}_{i}\big(x,\hbox{\LARGE{$\tau$}}\big(\frac{x}{\varepsilon}\big)\omega,\frac{x}{\varepsilon^{2}},E_{\varepsilon} \big)-\sigma^{\varepsilon}_{i}\big(x,\hbox{\LARGE{$\tau$}}\big(\frac{x}{\varepsilon}\big)\omega,\frac{x}{\varepsilon^{2}}, v^{\varepsilon}_{m} \big)\big) \times\big(E^{\varepsilon}_{i}-v^{\varepsilon}_{i,m}\big)dxd\mu dt\\
+\frac{1}{2}\int^{T}_{0}\partial_{t}\int_{\Omega\times\Lambda}\eta^{\varepsilon}_{ij}\big(E^{j}_{\varepsilon}-v^{\varepsilon}_{j,m} \big)\big(E^{i}_{\varepsilon}-v^{\varepsilon}_{i,m} \big)dxd\mu dt\\
+\frac{1}{2}\int_{\Omega\times\Lambda}\eta^{\varepsilon}_{ij}\big(E^{j}_{\varepsilon}(.,.,0)-v^{\varepsilon}_{j,m}(.,.,0)\big)\big(E^{i}_{\varepsilon}
(.,.,0)-v^{\varepsilon}_{i,m}(.,.,0) \big)dxd\mu dt\\
+\frac{1}{2}\int^{T}_{0}\partial_{t}\int_{\Omega\times\Lambda}\mu^{\varepsilon}_{ij}\big(H^{j}_{\varepsilon}-u^{\varepsilon}_{j,m}\big)\big(H^{i}_{\varepsilon}-u^{\varepsilon}_{i,m}\big)dxd\mu\\
+\frac{1}{2}\int_{\Omega\times\Lambda}\mu^{\varepsilon}_{ij}\big(H^{j}_{\varepsilon}(.,.,0)-u^{\varepsilon}_{j,m}(.,.,0) \big)\big(H^{i}_{\varepsilon}
(.,.,0)-u^{\varepsilon}_{i,m}(.,.,0) \big)dxd\mu\\
=\int^{T}_{0}\!\!\int_{\Omega\times\Lambda}\Biggl[\underbrace{J^{i}_{\varepsilon}E^{i}_{\varepsilon}}_{1} -\underbrace{\sigma^{\varepsilon}_{i}\big(x, \hbox{\LARGE{$\tau$}}\big(\frac{x}{\varepsilon}\big)\omega,\frac{x}{\varepsilon^{2}}, v^{\varepsilon}_{m}\big)E^{\varepsilon}_{i}}_{2}-\underbrace{J^{i}_{\varepsilon}v^{\varepsilon}_{i,m}}_{3} \\
+\underbrace{\sigma^{\varepsilon}_{i}\big(x,\hbox{\LARGE{$\tau$}}\big(\frac{x}{\varepsilon}\big)\omega, \frac{x}{\varepsilon^{2}}, v^{\varepsilon}_{m}\big)v^{\varepsilon}_{i,m}}_{4}\Biggr]dxd\mu dt \\
+\underbrace{\frac{1}{2}\int^{T}_{0}\partial_{t}\int_{\Omega\times\Lambda}\left(\eta^{\varepsilon}_{i,j}E^{j}_{\varepsilon}E^{i}_{\varepsilon}+\mu^{\varepsilon}_{i,j}
H^{j}_{\varepsilon}H^{i}_{\varepsilon}\right)dxd\mu dt}_{5}\\
-\underbrace{\frac{1}{2}\int^{T}_{0}\partial_{t}\int_{\Omega\times\Lambda}\left(\eta^{\varepsilon}_{i,j}E^{j}_{\varepsilon}v^{\varepsilon}_{i,m}
+\mu^{\varepsilon}_{i,j}H^{j}_{\varepsilon}u^{\varepsilon}_{i,m}\right)dxd\mu dt}_{6}\\
+\underbrace{\frac{1}{2}\int^{T}_{0}\partial_{t}\int_{\Omega\times\Lambda}\left(\eta^{\varepsilon}_{ij}v^{\varepsilon}_{j,m}v^{\varepsilon}_{i,m}+\mu^{\varepsilon}_{ij}
u^{\varepsilon}_{j,m}u^{\varepsilon}_{i,m}\right)dxd\mu dt}_{7}\\
+\frac{1}{2}\underbrace{\int_{\Omega\times\Lambda}\eta^{\varepsilon}_{ij}\left(E^{j}_{\varepsilon}(.,.,0)-v^{\varepsilon}_{j,m}(.,.,0) \right)\big(E^{i}_{\varepsilon}(.,.,0)-v^{\varepsilon}_{i,m}(.,.,0) \big)dxd\mu}_{8}\\
+\frac{1}{2}\underbrace{\int_{\Omega\times\Lambda}\mu^{\varepsilon}_{ij}\big(H^{j}_{\varepsilon}(.,.,0)-u^{\varepsilon}_{j,m}(.,.,0) \big)\left(E^{i}_{\varepsilon}(.,.,0)-u^{\varepsilon}_{i,m}(.,.,0)\right)dxd\mu}_{9}\\
=\int^{T}_{0}\!\!\int_{\Omega\times\Lambda}\bigg[ \underbrace{J^{i}_{\varepsilon}E^{i}_{\varepsilon}}_{1}-\underbrace{\sigma^{\varepsilon}_{i}
\big(x, \hbox{\LARGE{$\tau$}}\big(\frac{x}{\varepsilon}\big)\omega, \frac{x}{\varepsilon^{2}},v^{\varepsilon}_{m}\big)E^{i}_{\varepsilon}}-
\underbrace{J^{i}_{\varepsilon}v^{\varepsilon}_{i,m}}_{3} \\
+\underbrace{\sigma^{\varepsilon}_{i}\big(x\hbox{\LARGE{$\tau$}}\big(\frac{x}{\varepsilon}\big)\omega, \frac{x}{\varepsilon^{2}},v^{\varepsilon}_{m} \big)v^{\varepsilon}_{i,m}}_{4}\bigg]dxd\mu dt\\
+\underbrace{\int^{T}_{0}\int_{\Omega\times\Lambda}\left(\eta^{\varepsilon}_{ij}\partial_{t}(E^{j}_{\varepsilon})E^{i}_{\varepsilon}+\mu^{\varepsilon}_{ij}\partial_{t}(H^{j}_{\varepsilon})H^{i}_{\varepsilon}\right)dxd\mu dt}_{5}\\
-\frac{1}{2}\underbrace{\int^{T}_{0}\!\!\int_{\Omega\times\Lambda}\eta^{\varepsilon}_{ij}\partial_{t}(E^{j}_{\varepsilon})v^{\varepsilon}_{i,m}dxd\mu dt}_{6.1}-\frac{1}{2}\underbrace{\int^{T}_{0}\!\!\int_{\Omega\times\Lambda}\eta^{\varepsilon}_{ij}E^{j}_{\varepsilon}\partial_{t}(v^{\varepsilon}_{i,m})
dxd\mu dt}_{6.2}\\
-\frac{1}{2}\underbrace{\int^{T}_{0}\!\!\int_{\Omega\times\Lambda}\mu^{\varepsilon}_{ij}\partial_{t}(H^{j}_{\varepsilon})u^{\varepsilon}_{i,m}dxd\mu dt}_{6.3}-\frac{1}{2}\underbrace{\int^{T}_{0}\!\!\int_{\Omega\times\Lambda}\mu^{\varepsilon}_{ij}H^{j}_{\varepsilon}\partial_{t}(u^{\varepsilon}_{i,m})
dxd\mu dt}_{6.4}

\end{array}
\end{equation}

\begin{flushleft}
\begin{equation*} 
\begin{array}{l}
+\underbrace{\int^{T}_{0}\!\!\int_{\Omega\times\Lambda}\left(\eta^{\varepsilon}_{ij}\partial_{t}\left(v^{\varepsilon}_{j,m}\right)v^{\varepsilon}_{i,m}
	+\mu^{\varepsilon}_{ij}\partial_{t}\left(u^{\varepsilon}_{j,m}\right)u^{\varepsilon}_{i,m}\right)dxd\mu dt }_{7} \\
+\frac{1}{2}\underbrace{\int_{\Omega\times\Lambda}\eta^{\varepsilon}_{ij}\left(E^{j}_{\varepsilon}(.,.,0)-v^{\varepsilon}_{j,m}(.,.,0)\right)\left(E^{i}_{\varepsilon}(.,.,0)-v^{\varepsilon}_{i,m}(.,.,0) \right)dxd\mu }_{8}\\
+\frac{1}{2}\underbrace{\int_{\Omega\times\Lambda}\mu^{\varepsilon}_{ij}\left(H^{j}_{\varepsilon}(.,.,0)-u^{\varepsilon}_{j,m}(.,.,0)\right)\left(H^{i}_{\varepsilon}(.,.,0)-u^{\varepsilon}_{i,m}(.,.,0) \right)dxd\mu }_{9}.
\end{array}
\end{equation*}
\end{flushleft}

For $\varepsilon \longrightarrow 0$, one gets at the limit:
\begin{equation}\nonumber
\begin{array}{l}
8\longrightarrow \frac{1}{2} \int_{\Omega\times\Lambda\times Y}\eta_{ij}r^{2}e_{2,j}(x,\omega,z,0)e_{2,i}(x,\omega,z,0)dxd\mu\\
9\longrightarrow \frac{1}{2} \int_{\Omega\times\Lambda\times Y}\mu_{ij}r^{2}h_{2,j}(x,\omega,z,0)h_{2,i}(x,\omega,z,0)dxd\mu \\
7\longrightarrow \int^{T}_{0}\!\!\int_{\Omega\times\Lambda\times Y}\eta_{ij}\partial_{t}\big(E^{j}_{0m}(x,t)+\partial^{\omega}_{j}e^{s}_{0,m}(x,\omega,t)+\partial_{zj}e^{\sigma}_{0,m}(x,\omega,z,t)+re_{2,j}(x,\omega,z,t)\big) \times \\
\big(E^{j}_{0m}(x,t)+\partial^{\omega}_{i}e^{s}_{0,m}(x,\omega,t)+\partial_{zi}e^{\sigma}_{0,m}(x,\omega,z,t)+re_{2,i}(x,\omega,z,t) \big)dxd\mu 
dzdt\\
+\int^{T}_{0}\!\!\int_{\Omega\times\Lambda\times Z}\mu_{ij}\partial_{t}\big(H^{j}_{0m}(x,t)+\partial_{j}^{\omega}h^{s}_{0}(x,\omega,t)+
\partial_{zj}h^{\sigma}_{0}(x,\omega,z,t) +rh_{2,j}(x,\omega,z,t)\big)\times \\
H^{i}_{0m}(x,t)+\partial_{i}^{\omega}h^{s}_{0}(x,\omega,t)+\partial_{zi}h^{\sigma}_{0}(x,\omega,z,t)+rh_{2,i}(x,\omega,z,t)dxd\mu dzdt \\
6.1\longrightarrow \int^{T}_{0}\!\!\int_{\Omega\times\Lambda\times Z}\eta_{ij}\partial_{t}\big(E_{0j}(x,t)+\partial^{\omega}_{j}e^{s}_{0}(x,\omega,t)+\partial_{z_{j}}e^{\sigma}_{0}(x,\omega,z,t) \big)\times\\
\big(E^{i}_{0m}(x,t)+\partial^{\omega}_{i}e^{s}_{0,m}(x,\omega,t)+\partial_{z_{i}}e^{\sigma}_{0,m}(x,\omega,z,t)+re_{2,i}(x,\omega,z,t)\big)dxd\mu dzdt\\
6.2\longrightarrow \int^{T}_{0}\!\!\int_{\Omega\times\Lambda\times Z}\eta_{ij}\big(E_{0j}(x,t)+\partial_{j}^{\omega}e^{s}_{0}(x,\omega,t)+\partial_{z_{j}}e^{\sigma}_{0}(x,\omega,z,t) \big)\times \\
\partial_{t}\big(E^{i}_{0m}(x,t)+\partial_{i}^{\omega}e^{s}_{0,m}(x,\omega,t)+\partial_{z_{i}}e^{\sigma}_{0,m}(x,\omega,z,t)+re_{2,i}(x,\omega,z,t) \big)dxd\mu dzdt \\
6.3 \longrightarrow \int^{T}_{0}\!\!\int_{\Omega\times\Lambda\times Z}\mu_{ij}\partial_{t}\big(H_{0j}(x,t)+\partial^{\omega}_{j}h^{s}_{0}(x,\omega,t)+\partial_{z_{j}}h^{\sigma}_{0}(x,\omega,z,t)\big) \times\\
\big(H^{i}_{0m}(x,t)+\partial^{\omega}_{i}h^{s}_{0}(x,\omega,t)+\partial_{z_{i}}h^{\sigma}_{0}(x,\omega,z,t)+rh_{2,i}(x,\omega,z,t) \big)dxd\mu dzdt\\
6.4\longrightarrow \int^{T}_{0}\!\!\int_{\Omega\times\Lambda}\mu_{ij}\big(H_{0j}(x,t)+\partial_{j}^{\omega}h^{s}_{0}(x,\omega,t)+\partial_{z_{j}}h^{\sigma}_{0}(x,\omega,z,t) \big)\times \\
\partial_{t}\big(H^{i}_{0m}(x,t)+\partial^{\omega}_{i}h^{s}_{0,m}(x,\omega,t)+\partial_{z_{i}}h^{\sigma}_{0,m}(x,\omega,z,t)+rh_{2,i}(x,\omega,z,t) \big)dxd\mu dzdt\\
4\longrightarrow \int^{T}_{0}\!\!\int_{\Omega\times\Lambda\times Y}\sigma_{i}\big(x,\omega,z,E_{0m}(x,t)+D^{\omega}e^{s}_{0,m}(x,\omega,t)+D_{z}e^{\sigma}_{0,m}(x,\omega,z,t)+re_{2}(x,\omega,z,t) \big) \times \\
\big(E^{i}_{0m}(x,t)+\partial^{\omega}_{i}e^{s}_{0,m}(x,\omega,t)+\partial_{z_{i}}e^{\sigma}_{0,m}(x,\omega,z,t)+re_{2,i}(x,\omega,z,t)\big)dxd\mu dzdt\\
3\longrightarrow \int^{T}_{0}\!\!\int_{\Omega\times\Lambda\times Y}J_{0i} \times \big(E^{i}_{0m}(x,t)+\partial^{\omega}_{i}e^{s}_{0,m}(x,\omega,t)+\partial_{z_{i}}e^{\sigma}_{0,m}(x,\omega,z,t)+re_{2,i}(x,\omega,z,t) \big)dxd\mu dzdt\\
2\longrightarrow \int^{T}_{0}\!\!\int_{\Omega\times\Lambda\times Y}\sigma_{i}\big(x,\omega,z, E_{0m}(x,t)+\partial^{\omega}e^{s}_{0,m}(x,\omega,t)+D_{z}e^{\sigma}_{0,m}(x,\omega,z,t)+re_{2}(x,\omega,z,t)\big)\times\\
\big(E_{0j}(x,t)+\partial_{j}^{\omega}e^{s}_{0}(x,\omega,t)+\partial_{z_{j}}e^{\sigma}_{0}(x,\omega,z,t) \big)dxd\mu dzdt \\
1+5 \longrightarrow \int^{T}_{0}\!\!\int_{\Omega\times\Lambda}F_{i}(x,\omega,t)\big(E_{0i}(x,t)+\partial^{\omega}_{i}e^{s}_{0}(x,\omega,t)\big)dxd\mu dzdt \\
\hbox{For convergence of 1+5 one takes in account the relation } \mathcal{A}(U,U)=0.
\end{array}
\end{equation}
The above convergences are justified by properties of stochastic two scale convergence presented in section 2. In particular, convergences 8 and 9 are consequences of proposition \ref{p4.3}; convergence 4 is a consequence of lemmas (\ref{17}, \ref{l3.2}) and the equality below:  
\begin{equation}\nonumber
\begin{array}{l}
\sigma_{i}\big(x,\hbox{\LARGE{$\tau$}}\big(\frac{x}{\varepsilon} \big)\omega,\frac{x}{\varepsilon^{2}},v^{\varepsilon}_{m} \big)v^{\varepsilon}_{i,m}=
\big[ \sigma_{i}\big(x,\hbox{\LARGE{$\tau$}}\big(\frac{x}{\varepsilon}\big)\omega, \frac{x}{\varepsilon^{2}},v^{\varepsilon}_{m} \big)-\\
\sigma_{i}\big(x,\hbox{\LARGE{$\tau$}}\big(\frac{x}{\varepsilon}\big)\omega, \frac{x}{\varepsilon^{2}}, E^{m}_{i}(x,t)+\partial^{\omega}_{i}e^{s}_{1,m} \big(x,\hbox{\LARGE{$\tau$}}\big(\frac{x}{\varepsilon}\big)\omega, t \big)+\\
\partial_{zi}e^{\sigma}_{1,m}\big(x, \hbox{\LARGE{$\tau$}}\big(\frac{x}{\varepsilon}\big)\omega, \frac{x}{\varepsilon^{2}},t \big) +re_{2,i}\big(x,\hbox{\LARGE{$\tau$}}\big(\frac{x}{\varepsilon} \big)\omega, \frac{x}{\varepsilon^{2}},t \big)\big)  \big]v^{\varepsilon}_{i,m}\\
+\big[\sigma_{i}\big(x,\hbox{\LARGE{$\tau$}}\big(\frac{x}{\varepsilon}\big)\omega,\frac{x}{\varepsilon^{2}},E^{m}_{i}(x,t)+\partial^{\omega}_{i}e^{s}_{1,m}
\big(x,\hbox{\LARGE{$\tau$}}\big(\frac{x}{\varepsilon} \big)\omega,t\big)+\\
\partial_{z_{i}}e^{\sigma}_{1,m}\big(x,\hbox{\LARGE{$\tau$}}\big(\frac{x}{\varepsilon}\big)\omega, \frac{x}{\varepsilon^{2}},t \big)+re_{2,i}\big(x,\hbox{\LARGE{$\tau$}}\big(\frac{x}{\varepsilon}\big)\omega, \frac{x}{\varepsilon^{2}},t \big) \big)\big]v^{\varepsilon}_{i,m}.
\end{array}
\end{equation}
A similar process gives the convergence of 2. Next, take limit as $m\longrightarrow \infty$, taking in account lemmas (\ref{l4.2}, \;\ref{l4.3} and \ref{l4.5}), one gets: 
\begin{equation}\nonumber
\begin{array}{l}
\int^{T}_{0}\!\!\int_{\Omega\times\Lambda\times Z}\big[\sigma\big(x,\omega,z,E_{0}(x,t)+D^{\omega}e^{s}_{0}(x,\omega,t)+D_{z}e^{\sigma}_{0}(x,\omega,z,t)+re_{2}(x,\omega,z,t) \big)\\-J_{0}(x,\omega,z,t) \big]\times re_{2}(x,\omega,z,t)dxd\mu dzdt\\
+\int_{\Omega\times\Lambda\times Z}\eta_{ij}r^{2}e_{2,j}(x,\omega,z,0)e_{2,i}(x,\omega,z,0)dxd\mu dz \\
\frac{1}{2}\int^{T}_{0}\partial_{t}\int_{\Omega\times\Lambda\times Z}\eta_{ij}r^{2}e_{2,j}(x,\omega,z,t)e_{2,i}(x,\omega,z,t)dxd\mu dzdt\\
+\int_{\Omega\times\Lambda\times Z}\mu_{ij}r^{2}h_{2,j}(x,\omega,z,0)h_{2,i}(x,\omega,z,0)dxd\mu dz \\
+\frac{1}{2}\int^{T}_{0}\partial_{t}\int_{\Omega\times\Lambda\times Z}\mu_{ij}r^{2}h_{2,j}(x,\omega,z,t)h_{2,i}(x,\omega,z,t)dxd\mu dzdt\geq 0.

\end{array}
\end{equation}
We multiply by $\frac{1}{r}(r>0)$ and letting r go to zero to obtain:
\begin{equation}\nonumber
\begin{array}{l}
\int_{\Omega\times\Lambda\times Z}\left(\sigma\left(x,\omega,z,E_{0}(x,t)+D^{\omega}e^{s}_{0}(x,\omega,t)+D_{z}e^{\sigma}_{0}(x,\omega,z,t) \right)
-J_{0}(x,\omega,z,t)\right)\times\\
e_{2}(x,\omega,z,t)dxd\mu dzdt\geq 0,\; \forall e_{2}\in \mathcal{D}\big(Q;\big(\mathcal{C}^{\infty}(\Lambda); \mathcal{C}^{\infty}_{\#}(Z) \big) \big)^{3}.
\end{array}
\end{equation}
Restarting the above procedure for $(r<0)$ leads to:
\begin{equation}\nonumber
\begin{array}{l}
\int_{\Omega\times\Lambda\times Z}\left(\sigma\left(x,\omega,z,E_{0}(x,t)+D^{\omega}e^{s}_{0}(x,\omega,t)+D_{z}e^{\sigma}_{0}(x,\omega,z,t) \right)
-J_{0}(x,\omega,z,t)\right)\times\\
e_{2}(x,\omega,z,t)dxd\mu dzdt\geq 0,\; \forall e_{2}\in \mathcal{D}\big(Q;\big(\mathcal{C}^{\infty}(\Lambda); \mathcal{C}^{\infty}_{\#}(Z) \big) \big)^{3}.
\end{array}
\end{equation}
Then,
\begin{equation}\nonumber
\begin{array}{l}
\sigma(x,\omega,z,E_{0}(x,t)+D^{\omega}e^{s}_{0}(x,\omega,t)+D_{z}e^{\sigma}_{0}(x,\omega,z,t))=J_{0}(x,\omega,z,t)
\end{array}
\end{equation}
almost everywhere in $Q\times\Lambda\times Z$.
\end{proof}

\subsection{\textbf{Homogenization of Results.}}
\begin{theorem}\label{t4.1}
Let $(E_{\varepsilon},H_{\varepsilon})_{0<\varepsilon \leq 1}$ the sequence defined by theorem \ref{t3.1}, assume that \hbox{\LARGE{$\tau$}} is ergodic. Then, 
when $\varepsilon \longrightarrow 0$, we have:
\begin{equation}\nonumber
\left\{\begin{array}{l}
E_{\varepsilon} \longrightarrow E_{0} \\
H_{\varepsilon} \longrightarrow  H_{0}
\end{array}
in\; L^{\infty}(0, T; H_{\mathrm{curl}}(\Omega))\hbox{ -weak*},
\right.
\end{equation}
and
\begin{equation}\nonumber
\left\{\begin{array}{l}
\partial_{t}E_{\varepsilon}\longrightarrow \partial_{t}E_{0}\\
\partial_{t}H_{\varepsilon} \longrightarrow \partial_{t}H_{0}
\end{array}
in\; L^{\infty}\big(0,T; L^{2}(\Omega)^{3}\big)\hbox{ -weak*},
\right.
\end{equation}
the above quantities being linked by the Maxwell system:
\begin{equation}\nonumber
\left\{\begin{array}{r c l}
\partial_{t}D_{0}(x,t)+J_{0}(x,t)&=&\mathrm{curl}H_{0}(x,t)+\underline{F}(x,t)\\
\partial_{t}B_{0}(x,t)&=&-\mathrm{curl}E_{0}(x,t)\\
\mathrm{div}B_{0}(x,t)&=&0\\
\mathrm{div}D_{0}(x,t)&=&Q_{0}(x,\omega,t)\\
\gamma\wedge E_{0}(x,t)&=&0 \; on\; \partial\Omega\times ]0,T[ 
\end{array}
\right.
\end{equation}
with initial conditions
\begin{equation}\nonumber
\left\{\begin{array}{l}
E_{0}(x, 0)=\underline{E}^{0}(x),\; and\\
H_{0}(x,0)=\underline{H}^{0}(x)
\end{array}
\right.
\end{equation}
where
\begin{equation}\nonumber
\begin{array}{l}
B^{i}_{0}(x,t)=\int_{\Lambda}\!\int_{Z}\mu_{ij}(x,\omega,z)\big[H^{j}_{0}(x,t)+\partial^{\omega}_{j}h^{s}_{0}(x,\omega,t)+\partial_{z_{j}}h^{\sigma}
_{0}(x,\omega,z,t) \big]dzd\mu \\
J^{i}_{0}(x,t)=\int_{\Lambda}\!\int_{Z}\sigma_{i}(x,\omega,z,E_{0}(x,t)+D^{\omega}e^{s}_{0}(x,\omega,t)+D_{z}e^{\sigma}_{0}(x,\omega,z,t))dzd\mu\\
D^{i}_{0}(x,t)=\int_{\Lambda}\!\int_{Z}\eta_{ij}(x,\omega,z)\big[E^{j}_{0}(x,t)+\partial^{\omega}_{j}e^{s}_{0}(x,\omega,t)+\partial_{z_{j}}e^{\sigma}
_{0}(x,\omega,z,t) \big]dzd\mu.
\end{array}
\end{equation}
with in addition,
\begin{equation}\nonumber
\begin{array}{l}
\int_{Z}\big[\eta_{ij}(x,\omega,z)\partial_{t}\big(E^{j}_{0}(x,t)+\partial^{\omega}_{j}e^{s}_{0}(x,\omega,t)+\partial_{z_{j}}e^{\sigma}_{0}(x,\omega,z,t) \big)\\
+\sigma_{i}(x,\omega,z, E_{0}(x,t)+\partial^{\omega}e^{s}_{0}(x,\omega,t)+D_{z}e^{\sigma}_{0}(x,\omega,z,t)) \big]\times \partial_{z_{i}}v_{3}(z)dz=0,\\ 
for \; all \; v_{3}\in W^{1,2}_{\#}(Z); \\\\

\int\!\int_{\Lambda\times Z}\big[ \eta_{ij}(x,\omega,z)\partial_{t}\big(E^{j}_{0}(x,t)+\partial^{\omega}_{j}e^{s}_{0}(x,\omega,t)+\partial_{z_{j}}e^{\sigma}_{0}(x,\omega,z,t) \big)\\
+\sigma_{i}(x,\omega,z, E_{0}(x,t)+\partial^{\omega}e^{s}_{0}(x,\omega,t)+D_{z}e^{\sigma}_{0}(x,\omega,z,t)) \big]\times \partial^{\omega}_{j}v_{2}dzd\mu=0 \\
for\; all\; v_{2}\in \mathcal{H}_{\#}(\Lambda);
\end{array}
\end{equation}
and
\begin{equation}\nonumber
\begin{array}{l}
\int_{Z}\mu_{ij}(x,\omega,z)\times \big(H^{j}(x,t)+\partial^{\omega}_{j}h^{s}(x,\omega,t)+\partial_{z_{j}}h^{\sigma}(x,\omega,z,t)\big)\partial_{z_{i}}
v_{3}(z)dz=0,\\
for\; all\; v_{3}\in W^{1,2}_{\#}(Z);\\
\int\!\int_{\Lambda\times Z}\mu_{ij}(x,\omega,z)\times \big(H^{j}(x,t)+\partial^{\omega}_{j}h^{s}(x,\omega,t)+\partial_{z_{j}}h^{\sigma}(x,\omega,z,t) \big)\partial_{i}^{\omega}v_{2}dzd\mu=0,\\
for\; all\; v_{2}\in \mathcal{H}_{\#}(\Lambda).
\end{array}
\end{equation}
 \end{theorem}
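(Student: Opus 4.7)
The plan is to assemble the structural results from Section~4.1 together with the identification of $J_{0}$ from Lemma~\ref{l4.6} and to exploit the ergodicity hypothesis on $\tau$. First, the a priori estimates of Lemma~\ref{l3.8} and the compactness of Lemma~\ref{l4.1} supply a fundamental sequence $E\subset (0,1]$ along which $E_{\varepsilon}, H_{\varepsilon}$ stoch-2s converge to $\overline{E}_{1}=\overline{E}_{0}+D_{\omega}\overline{e}_{0}^{s}+D_{y}\overline{e}_{0}^{\sigma}$ and $\overline{H}_{1}=\overline{H}_{0}+D_{\omega}\overline{h}_{0}^{s}+D_{y}\overline{h}_{0}^{\sigma}$, with companion convergences for $\partial_{t}E_{\varepsilon}$, $\partial_{t}H_{\varepsilon}$, $\mathrm{curl}\,E_{\varepsilon}$, $\mathrm{curl}\,H_{\varepsilon}$. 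Combined with the $L^{\infty}(0,T;L^{2})$ bounds of Lemma~\ref{l3.8}, the usual Banach-Alaoglu extraction gives $E_{0},H_{0}\in L^{\infty}(0,T;H_{\mathrm{curl}}(\Omega))$ with $\partial_{t}E_{0},\partial_{t}H_{0}\in L^{\infty}(0,T;L^{2}(\Omega)^{3})$, and the asserted weak-$*$ convergences hold on this subsequence. Ergodicity of $\tau$ collapses $L^{2}_{nv}(\Lambda)$ to the constants, which forces $E_{0},H_{0}$ to depend only on $(x,t)$ and allows every test function $g\in\mathcal{C}^{\infty}(\Lambda)\cap L^{2}_{nv}(\Lambda)$ in the preceding statements to be taken as an arbitrary constant vector.

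Second, the macroscopic Maxwell equations are obtained by passing to the limit in the averaged identities of Propositions~\ref{p4.1} and~\ref{p4.2}. With $g$ constant and integrating in $\omega$ and $z$, I recover $\partial_{t}D_{0}+J_{0}=\mathrm{curl}\,H_{0}+\underline{F}$ and $\partial_{t}B_{0}=-\mathrm{curl}\,E_{0}$, where $B_{0}$, $D_{0}$ and $J_{0}$ are the $(\omega,z)$-averages of $\mu\overline{H}_{1}$, $\eta\overline{E}_{1}$ and $\sigma(\cdot,\overline{E}_{1})$ respectively. The crucial nonlinear identification $J_{0}(x,\omega,z,t)=\sigma\bigl(x,\omega,z,E_{0}+D_{\omega}e_{0}^{s}+D_{z}e_{0}^{\sigma}\bigr)$ is supplied by Lemma~\ref{l4.6}, which is then averaged over $\Lambda\times Z$ to produce the macroscopic $J_{0}^{i}$ displayed in the theorem. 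The divergence relations $\mathrm{div}\,B_{0}=0$ and $\mathrm{div}\,D_{0}=Q_{0}$ follow from passing to the stoch-2s limit in the last two equations of \eqref{1} and averaging in $\omega$, with $Q_{0}(x,\omega,t)=\int_{\Lambda\times Z}Q^{\varepsilon}$-limits (more precisely, the limit of the right-hand side of the fourth equation of \eqref{1}).

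Third, the initial conditions $E_{0}(\cdot,0)=\underline{E}^{0}$, $H_{0}(\cdot,0)=\underline{H}^{0}$ and the boundary condition $\gamma\wedge E_{0}=0$ on $\partial\Omega\times(0,T)$ are read off directly from Proposition~\ref{p4.3} after using ergodicity to take $g$ constant and then averaging in $\omega$ and $z$. The corrector (cell) problems at the end of the theorem are obtained as follows: the $z$-identities against $\partial_{z_{i}}v_{3}$ with $v_{3}\in W^{1,2}_{\#}(Z)$ are precisely the variational form of the relations $\mathrm{div}_{z}[\mu H_{1}]=0$ and $\mathrm{div}_{z}[\eta\partial_{t}E_{1}+J_{0}]=0$ provided by Lemma~\ref{l4.2}; the $\omega$-identities against $D^{\omega}_{j}v_{2}$ with $v_{2}\in\mathcal{H}_{\#}(\Lambda)$ come from Lemma~\ref{l4.3} by inserting $v_{2}$ in place of $g$ and recognising that $\int_{\Lambda}(\cdots)D^{\omega}_{j}v_{2}\,d\mu$ is the natural duality pairing associated with the div$_\omega$ identities therein.

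Finally, the main technical obstacle is not in any of these steps individually but in upgrading convergence of the subsequence to convergence of the whole sequence. This is achieved by proving uniqueness of solutions to the coupled homogenized and cell system: the coercivity of $\mu$ and $\eta$ combined with the strict monotonicity \eqref{7} of $\sigma$ yields a standard energy estimate on the difference of two putative solutions, exactly as in the uniqueness argument of Lemma~\ref{l3.7}, forcing the difference to vanish. The conceptually delicate passage, namely the identification of the nonlinear limit $J_{0}$ as a composition with the two-scale limit rather than as a mere weak limit, has already been settled in Lemma~\ref{l4.6} via a Minty--Browder-type monotonicity trick adapted to the stochastic--periodic two-scale framework, which is why the remaining argument reduces to the assembly outlined above.
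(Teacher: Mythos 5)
Your proposal is correct and follows essentially the same route as the paper: the authors prove Theorem 4.2 (the non-ergodic case) by summarizing the results of Section 4.1 together with a monotonicity-based uniqueness argument for the limit system, and then obtain Theorem 4.1 by observing that ergodicity forces the invariant test functions $v_{2}\in\mathcal{C}^{\infty}(\Lambda)\cap L^{2}_{nv}(\Lambda)$ to be constants, exactly the reduction you describe. Your write-up merely makes explicit the assembly of Lemmas \ref{l4.1}--\ref{l4.6} and Propositions \ref{p4.1}--\ref{p4.3} that the paper leaves implicit.
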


\begin{proof}
It is sufficient to realize that in the proof of theorem (4.2) below in the case the dynamic system, is ergodic, $v_{2}\in \mathcal{C}^{\infty}(\Lambda)\cap I^{2}_{nv}(\Lambda)$ is a constant.
\end{proof}

\begin{theorem}\label{t4.2}
Let $(E_{\varepsilon}, H_{\varepsilon})_{0<\varepsilon\leq 1}$ the sequence defined by  theorem \ref{t3.1}, assume \hbox{\LARGE{$\tau$}} is not ergodic.
Then, when $\varepsilon \longrightarrow 0$, we get:
\begin{equation}\nonumber
\left\{\begin{array}{l}
E_{\varepsilon} \longrightarrow E_{0}+D_{\omega}e^{s}_{0}\\
H_{\varepsilon} \longrightarrow H_{0}+D_ {\omega}h^{s}_{0}
\end{array}
in \;L^{\infty}\big(0,T;H_{\mathrm{curl}}\left(\Omega;L^{2}_{nv}(\Lambda)\right) \big)\hbox{ -weak*},
\right.
\end{equation}
and
\begin{equation}\nonumber
\left\{\begin{array}{l}
\partial_{t}E_{\varepsilon}\longrightarrow \partial_{t}[E_{0}+D_{\omega}e^{s}_{0}]\\
\partial_{t}H_{\varepsilon}\longrightarrow \partial_{t}[H_{0}+D_{\omega}h^{s}_{0}]
\end{array}
in\; L^{\infty}\big(0,T;L^{2}\left(\Omega;L^{2}_{nv}(\Lambda)\right)^{3} \big)\hbox{ -weak*},
\right.
\end{equation}
the above quantities being linked by the Maxwell system:
\begin{equation}\label{4.2}
\left\{\begin{array}{r c l}
\partial_{t}D_{0}(x,\omega,t)+J_{0}(x,\omega,t)&=&\mathrm{curl}\left(H_{0}(x,t)+D^{\omega}h^{s}_{0}(x,\omega,t) \right)+F(x,\omega,t)\; in\; Q\times \Lambda\\
\partial_{t}B_{0}(x,\omega,t)&=&-\mathrm{curl}\left(E_{0}(x,t)+D^{\omega}e^{s}_{0}(x,\omega,t) \right)\; in\; Q\times \Lambda \\
\mathrm{div}B_{0}(x,\omega,t)&=&0\; in\; Q\times \Lambda \\
\mathrm{div}D_{0}(x,\omega,t)&=&Q_{0}(x,\omega,t)\; in\; Q\times \Lambda\\
\gamma\wedge\left(E_{0}(x,t)+D^{\omega}e^{s}_{0}(x,\omega,t) \right)&=&0\; on\; \partial\Omega\times\Lambda\times ]0,T[

\end{array}
\right.
\end{equation}
with initial conditions
\begin{equation}\nonumber
\left\{\begin{array}{l}
[E_{0}+D_{\omega}e^{s}_{0}](x,0)=E^{0}(x),\; and\\

[H_{0}+D_{\omega}h^{s}_{0}](x,0)=H^{0}(x)\; on\; L^{2}_{nv}(\Lambda; \mathbb{R})
\end{array}
\right.
\end{equation}
where
\begin{equation}\label{4.3}
\begin{array}{r c l}
B^{i}_{0}(x,\omega,t)&=&\int_{Z}\mu_{ij}(x,\omega,z)\big[H^{j}_{0}(x,t)+\partial^{\omega}_{j}h^{s}_{0}(x,\omega,t)+\partial_{z_{j}}h^{\sigma}_{0}
(x,\omega,z,t)\big]dz\\
J^{i}_{0}(x,\omega,t)&=&\int_{Z}\sigma_{i}(x,\omega,z, E_{0}(x,t)+D^{\omega}e^{s}_{0}(x,\omega,t)+D_{z}e^{\sigma}_{0}(x,\omega,z,t))dz\\
D^{i}_{0}(x,\omega,t)&=&\int_{Z}\eta_{ij}(x,\omega,z)\big[E^{j}_{0}(x,t)+\partial^{\omega}_{j}e^{s}_{0}(x,\omega,t)+\partial_{z_{j}}e^{\sigma}_{0}
(x,\omega,z,t)\big]dz,\\
\end{array}
\end{equation}
with in addition,
\begin{equation}\label{4.4}
\begin{array}{l}
\int_{Z}\big[\eta_{ij}(x,\omega,z)\partial_{t}\big(E^{j}_{0}(x,t)+\partial^{\omega}_{j}e^{s}_{0}(x,\omega,t)+\partial_{z_{j}}e^{\sigma}_{0}(x,\omega,z,t) \big)\\
+\sigma_{i}(x,\omega,z,E_{0}(x,t)+\partial^{\omega}e^{s}_{0}(x,\omega,t)+D_{z}e^{\sigma}_{0}(x,\omega,z,t)) \big]\times \partial_{z_{i}}v_{3}(z)dz=0,\\
for\; all\; v_{3}\in W^{1,2}_{\#}(Z);\\\\

\int\!\int_{\Lambda\times Z}\big[ \eta_{ij}(x,\omega,z)\partial_{t}\big(E^{j}_{0}(x,t)+\partial^{\omega}_{j}e^{s}_{0}(x,\omega,t)+\partial_{z_{j}}e^{\sigma}_{0}(x,\omega,z,t) \big)\\
+\sigma_{i}(x,\omega,z, E_{0}(x,t)+\partial^{\omega}e^{s}_{0}(x,\omega,t)+D_{z}e^{\sigma}_{0}(x,\omega,z,t)) \big]\times \partial^{\omega}_{j}v_{2}dzd\mu=0 \\
for\; all\; v_{2}\in \mathcal{H}_{\#}(\Lambda);
\end{array}
\end{equation}
and
\begin{equation}\label{4.5}
\begin{array}{l}
\int_{Z}\mu_{ij}(x,\omega,z)\times \big(H^{j}_{0}(x,t)+\partial^{\omega}_{j}h^{s}_{0}(x,\omega,t)+\partial_{z_{j}}h^{\sigma}_{0}(x,\omega,z,t)\big)\partial_{z_{i}}
v_{3}(z)dz=0,\\
for\; all\; v_{3}\in W^{1,2}_{\#}(Z);\\
\int\!\int_{\Lambda\times Z}\mu_{ij}(x,\omega,z)\times \big(H^{j}_{0}(x,t)+\partial^{\omega}_{j}h^{s}_{0}(x,\omega,t)+\partial_{z_{j}}h^{\sigma}_{0}(x,\omega,z,t) \big)\partial_{i}^{\omega}v_{2}dzd\mu=0,\\
for\; all\; v_{2}\in \mathcal{H}_{\#}(\Lambda).
\end{array}
\end{equation}
\end{theorem}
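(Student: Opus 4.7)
The plan is to assemble the preliminary results of Section 4 (Lemmas \ref{l4.1}--\ref{l4.6} and Propositions \ref{p4.1}--\ref{p4.3}) into the homogenized system, without invoking ergodicity of \hbox{\LARGE{$\tau$}}. First I would use the a priori bounds of Lemma \ref{l3.8} to extract, via Lemma \ref{l4.1}, a fundamental sequence $E$ along which $E_{\varepsilon}$ and $H_{\varepsilon}$ converge weakly stoch-2s to $E_{1}=E_{0}+D_{\omega}e^{s}_{0}+D_{y}e^{\sigma}_{0}$ and $H_{1}=H_{0}+D_{\omega}h^{s}_{0}+D_{y}h^{\sigma}_{0}$ respectively, while $J_{\varepsilon}\to \overline{J}_{0}$ and $\partial_{t}E_{\varepsilon},\partial_{t}H_{\varepsilon}$ have the stated stoch-2s limits. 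Integrating the two-scale limits against test functions depending only on $(x,t)$ and against elements of $L^{2}_{nv}(\Lambda)$ yields the claimed weak-$*$ convergences in $L^{\infty}\big(0,T;H_{\mathrm{curl}}(\Omega;L^{2}_{nv}(\Lambda))\big)$ and in $L^{\infty}\big(0,T;L^{2}(\Omega;L^{2}_{nv}(\Lambda))^{3}\big)$, since $E_{0}+D_{\omega}e^{s}_{0}$ and $H_{0}+D_{\omega}h^{s}_{0}$ are precisely the $\tau$-invariant parts of the limits.

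Next I would derive the macroscopic system (\ref{4.2}). Proposition \ref{p4.1} applied against arbitrary $g\in[\mathcal{C}^{\infty}(\Lambda)\cap L^{2}_{nv}(\Lambda)]^{3}$ and density of such $g$ in $L^{2}_{nv}(\Lambda)^{3}$ yields the first equation of (\ref{4.2}) with $D_{0}^{i}$, $J_{0}^{i}$ read directly from the integrated stoch-2s limits (giving (\ref{4.3})). Proposition \ref{p4.2} gives the second equation of (\ref{4.2}) in the same way. The divergence equations on $B_{0}$ and $D_{0}$ follow from Lemma \ref{l4.3} combined with the third and fourth equations of the original system (\ref{1}) and the hypothesis on $Q^{\varepsilon}$. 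Proposition \ref{p4.3} supplies the boundary condition $\gamma\wedge(E_{0}+D^{\omega}e^{s}_{0})=0$ on $\partial\Omega\times\Lambda\times(0,T)$ (the formulation being understood against elements of $L^{2}_{nv}(\Lambda)^{3}$), together with the initial data in $L^{2}_{nv}(\Lambda)$.

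The cell problems (\ref{4.4}) and (\ref{4.5}) I would obtain as follows. Lemma \ref{l4.2} already gives $\mathrm{div}_{z}$-free conditions in $Z$ for $\mu H_{1}$ and for $\eta\partial_{t}E_{1}+J_{0}$, which by an integration-by-parts against $v_{3}\in W^{1,2}_{\#}(Z)$ are exactly the first lines of (\ref{4.4}) and (\ref{4.5}). The integrated version in $\omega$ of Lemma \ref{l4.2}, tested against $D^{\omega}v_{2}$ for $v_{2}\in \mathcal{H}_{\#}(\Lambda)$, yields the second lines. At this stage all terms in the system are identified in terms of $(E_{0},H_{0},e_{0}^{s},h_{0}^{s},e_{0}^{\sigma},h_{0}^{\sigma})$ except for $J_{0}$. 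The identification $J_{0}=\sigma(x,\omega,z,E_{0}+D^{\omega}e^{s}_{0}+D_{z}e^{\sigma}_{0})$, which is what gives the formula for $J_{0}^{i}$ in (\ref{4.3}), is exactly Lemma \ref{l4.6}.

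The main obstacle, and the reason all preceding lemmas were built up, is the identification of $J_{0}$ by the Minty-type monotonicity argument in Lemma \ref{l4.6}: one must simultaneously pass to the stoch-2s limit in the quadratic energy identity, control the time derivative terms via the convergence of initial data from Proposition \ref{p4.3}, exploit condition (\ref{7}) on $\sigma$, and then let the approximating sequence $(E_{0m},h_{0,m}^{s},e_{0,m}^{\sigma},\dots)$ converge to the true limits in order to recover arbitrary test functions $e_{2}\in \mathcal{D}(Q;(\mathcal{C}^{\infty}(\Lambda);\mathcal{C}^{\infty}_{\#}(Z)))^{3}$. Once Lemma \ref{l4.6} is in hand, uniqueness of the limit problem (arguing as in the uniqueness step of Lemma \ref{l3.7}, using strict monotonicity (\ref{7}) of $\sigma$ and coercivity of $\eta,\mu$) ensures the entire sequence converges, not merely the extracted subsequence, completing the proof.
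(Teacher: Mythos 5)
Your proposal is correct and follows essentially the same route as the paper: the theorem is assembled from Lemmas \ref{l4.1}--\ref{l4.6} and Propositions \ref{p4.1}--\ref{p4.3}, with Lemma \ref{l4.6} supplying the identification of $J_{0}$, and the only genuinely new content of the proof is the uniqueness of the limit system, which upgrades subsequence convergence to convergence of the whole family. Note only that the paper's uniqueness step is not a verbatim repeat of Lemma \ref{l3.7}: it tests the macroscopic equations together with the cell identities of Lemma \ref{l4.2} against the differences $E+D^{\omega}e^{s}+D_{z}e^{\sigma}$, $e^{\sigma}_{1}-e^{\sigma}_{2}$, $h^{s}_{1}-h^{s}_{2}$, etc., and sums before invoking monotonicity of $\sigma$, symmetry of $\eta,\mu$ and the null initial data --- which is exactly the strategy you indicate, just carried out at all three scales simultaneously.
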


\begin{proof}
The theorem is a summary of result proven above. We will focus on unicity of couple of solution. Assume there are two solutions 
\begin{equation}\nonumber
\begin{array}{l}
\left(E_{1}(x,t)+D^{\omega}e^{s}_{1}(x,\omega,t)+D_{z}e^{\sigma}_{1}(x,\omega,z,t), E_{2}(x,t)+D^{\omega}e^{s}_{2}(x,\omega,t)+D_{z}e^{\sigma}
_{2}(x,\omega,z,t)\right)\\
\hbox{ and }\\
\left(H_{1}(x,t)+D^{\omega}h^{s}_{1}(x,\omega,t)+D_{z}h^{\sigma}_{1}(x,\omega,z,t), H_{2}(x,t)+D^{\omega}h^{s}_{2}(x,\omega,t)+D_{z}h^{\sigma}
_{2}(x,\omega,z,t)\right)
\end{array}
\end{equation}
their differences 
\begin{equation}\nonumber
\begin{array}{l}
E(x,t)+D^{\omega}e^{s}(x,\omega,t)+D_{z}e^{\sigma}(w,\omega,z,t)=\\
E_{1}(x,t)-E_{2}(x,t)+D^{\omega}(e^{s}_{1}(x,\omega,t)-e^{s}_{2}(x,\omega,t))\\
+D_{z}(e^{\sigma}_{1}(x,\omega,z,t)-e^{\sigma}_{2}(x,\omega,z,t)),
\end{array}
\end{equation}
and
\begin{equation}\nonumber
\begin{array}{l}
H(x,t)+D^{\omega}h^{s}(x,\omega,t)+D_{z}h^{\sigma}(w,\omega,z,t)=\\
H_{1}(x,t)-H_{2}(x,t)+D^{\omega}(h^{s}_{1}(x,\omega,t)-h^{s}_{2}(x,\omega,t))\\
+D_{z}(h^{\sigma}_{1}(x,\omega,z,t)-h^{\sigma}_{2}(x,\omega,z,t)),
\end{array}
\end{equation}
verify:
\begin{equation}\nonumber
\begin{array}{l}
E(x,0)+D^{\omega}e^{s}(x,\omega,0)+D_{z}e^{\sigma}(x,\omega,z,0)=0\\
H(x,0)+D^{\omega}h^{s}(x,\omega,0)+D_{z}h^{\sigma}(x,\omega,z,0)=0.
\end{array}
\end{equation}
From limits of the first equation of the system (\ref{1}) after difference one gets:
\begin{equation}\nonumber
\begin{array}{l}
\int^{T}_{0}\!\int_{\Omega\times\Lambda\times Z}\big[\eta_{ij}(x,\omega,z)\partial_{t}\big(E^{j}(x,t)+\partial^{\omega}_{j}e^{s}(x,\omega,t)+\partial_{z_{j}}e^{\sigma}(x,\omega,z,t) \big) +\\
\left.\begin{array}{l}
\sigma_{i}\left(x,\omega,z,E^{1}(x,t)+\partial^{\omega}e^{s}_{1}(x,\omega,t)+D_{z}e^{\sigma}_{1}(x,\omega,z,t) \right) \\
-\sigma_{i}\left(x,\omega,z,E^{2}(x,t)+\partial^{\omega}e^{s}_{2}(x,\omega,t)+D_{z}e^{\sigma}_{2}(x,\omega,z,t) \right)
\end{array}
\right] \times \\
E^{i}(x,t)v_{2}(\omega)dxd\mu dzdt= \\
\int^{T}_{0}\!\int_{\Omega\times \Lambda}\left(\mathrm{curl}\left(H(x,t)+\partial^{\omega}h^{s}(x,\omega,t) \right)\right)_{i}(x,t)E^{i}(x,t)v_{2}(\omega)dxd\mu dt.
\end{array}
\end{equation}
In a similar way, the second equation of (\ref{1}) leads to:
\begin{equation}\label{4.6}
\begin{array}{l}
\int^{T}_{0}\!\int_{\Omega\times\Lambda \times Z}\mu_{ij}(x,\omega,z)\partial_{t}\big(H^{j}(x,t)+\partial^{\omega}_{j}h^{s}(x,\omega,t)+\partial_{z_{i}}h^{\sigma}(x,\omega,z,t) \big)\times H^{i}(x,t)v_{2}dxd\mu dzdt=\\
-\int^{T}_{0}\!\int_{\Omega\times \Lambda}\left(\mathrm{curl}\; E(x,t)+\partial^{\omega}e^{s}(x,\omega,t) \right)_{i}H^{i}(x,t)v_{2}(\omega)dxd\mu dt,\; v_{2}\in \mathcal{C}^{\infty}(\Lambda) \cap I^{2}_{nv}(\Lambda).
\end{array}
\end{equation}
The second equality of lemma \ref{l4.2} applied to $e^{\sigma}_{1}-e^{\sigma}_{2}$ writes as follows:
\begin{equation}\nonumber
\begin{array}{l}
\int_{Z}\big[\eta_{ij}(x,\omega,z)\partial_{t}\big(E^{j}_{1}(x,t)+\partial^{\omega}_{j}e^{s}_{1}(x,\omega,t)+\partial_{z_{j}}e^{\sigma}_{1}(x,\omega,z,t) \big)\\
+\sigma_{i}(x,\omega,z, E_{1}(x,t)+\partial^{\omega}e^{s}_{1}(x,\omega,t)+D_{z}e^{\sigma}_{1}(x,\omega,z,t))  \big]\times \\
\partial_{z_{i}}(e^{\sigma}_{1}-e^{\sigma}_{2})(z)dz=0;\\\\

\int_{Z}\!\big[\eta_{ij}(x,\omega,z)\partial_{t}\big(E^{j}_{2}(x,t)+\partial^{\omega}_{j}e^{s}_{2}(x,\omega,t)+\partial_{z_{j}}e^{\sigma}_{2}(x,\omega,z,t) \big) \\
+\sigma_{i}(x,\omega,z, E^{2}(x,t)+\partial^{\omega}e^{s}_{2}(x,\omega,t)+D_{z}e^{\sigma}_{2}(x,\omega,z,t))  \big]\times \\
\partial_{z_{i}}(e^{\sigma}_{1}-e^{\sigma}_{2})(z)dz=0;
\end{array}
\end{equation}
The second equation in (\ref{1}) leads to $(\mathrm{div}_{z}[\mu(x,\omega,\cdot)\partial_{t}H_{1}(x,t,\omega,\cdot)])=0$ in $Z$, and applying this at $h^{\sigma}_{1}-h^{\sigma}_{2}$ it follows that 
\begin{equation}\nonumber
\begin{array}{l}
\int_{Z}\mu_{ij}(x,\omega,z) \times\\
\partial_{t}\big(H^{j}_{1}(x,t)+\partial^{\omega}_{j}h^{s}_{1}(x,\omega,t)+\partial_{z_{j}}h^{\sigma}_{1}(x,\omega,z,t) \big)\partial_{z_{i}}(h^{\sigma}_{1}-h^{\sigma}_{2})(z)dz=0,\\\\

\int_{Z}\mu_{ij}(x,\omega,z) \times\\
\partial_{t}\big(H^{j}_{2}(x,t)+\partial^{\omega}_{j}h^{s}_{2}(x,\omega,t)+\partial_{z_{j}}h^{\sigma}_{2}(x,\omega,z,t) \big)\partial_{z_{i}}(h^{\sigma}_{1}-h^{\sigma}_{2})(z)dz=0,
\end{array}
\end{equation}
and the difference gives:
\begin{equation}\label{4.7}
\begin{array}{l}
\int_{Z}\big[\eta_{ij}(x,\omega,z)\partial_{t}\big(E^{j}(x,t)+ \partial^{\omega}_{j}e(x,\omega,t)+\partial_{z_{j}}e(x,\omega,z,t) \big) \\
+\left.\begin{array}{l}
\sigma_{i}(x,\omega,z, E_{1}(x,t)+\partial^{\omega}e^{s}_{1}(x,\omega,t)+D_{z}e^{\sigma}_{1}(x,\omega,z,t))-\\
\sigma_{i}(x,\omega,z, E_{2}(x,t)+\partial^{\omega}e^{s}_{2}(x,\omega,t)+D_{z}e^{\sigma}_{2}(x,\omega,z,t))
\end{array}
\right] \\
\times \partial_{z_{i}}(e^{\sigma}_{1}-e^{\sigma}_{2})(z)dz=0;
\end{array}
\end{equation}
and
\begin{equation}\nonumber
\begin{array}{l}
\int_{Z}\mu_{ij}(x,\omega,z) \times\\
\partial_{t}\big(H^{j}(x,t)+\partial^{\omega}_{j}h^{s}(x,\omega,t)+\partial_{z_{j}}h^{\sigma}(x,\omega,z,t) \big)\partial_{z_{i}}(h^{\sigma}_{1}-h^{\sigma}_{2})(z)dz=0.
\end{array}
\end{equation}
Restarting the above process taking in account the two other equalities of lemma \ref{l4.2}, with the quantities $e^{\sigma}_{1}-e^{\sigma}_{2}$ and $h^{s}_{1}-h^{s}_{2}$  gives us:
\begin{equation}\label{4.8}
\begin{array}{l}
\int_{\Lambda}\big[\eta_{ij}(x,\omega,z)\partial_{t}\big(E^{j}(x,t)+ \partial^{\omega}_{j}e(x,\omega,t)+\partial_{z_{j}}e(x,\omega,z,t) \big) \\
+\left.\begin{array}{l}
\sigma_{i}(x,\omega,z, E_{1}(x,t)+\partial^{\omega}e^{s}_{1}(x,\omega,t)+D_{z}e^{\sigma}_{1}(x,\omega,z,t))-\\
\sigma_{i}(x,\omega,z, E_{2}(x,t)+\partial^{\omega}e^{s}_{2}(x,\omega,t)+D_{z}e^{\sigma}_{2}(x,\omega,z,t))
\end{array}
\right] \\
\times \partial^{\omega}_{i}(e^{s}_{1}-e^{s}_{2})(\omega)d\mu=0;
\end{array}
\end{equation}
and
\begin{equation}\nonumber
\begin{array}{l}
\int_{\Lambda\times Z}\mu_{ij}(x,\omega,z) \times\\
\partial_{t}\big(H^{j}(x,t)+\partial^{\omega}_{j}h^{s}(x,\omega,t)+\partial_{z_{j}}h^{\sigma}(x,\omega,z,t) \big)\partial^{\omega}_{i}(h^{\sigma}-h^{\sigma}_{2})(\omega)d\mu=0.
\end{array}
\end{equation}
(\ref{4.6}), (\ref{4.7}) and (\ref{4.8}) lead after summation to:
\begin{equation}\nonumber
\begin{array}{l}
\int_{\Omega\times\Lambda\times Z}\big[\eta_{ij}(x,\omega,z)\partial_{t}\big(E^{j}(x,t)+ \partial^{\omega}_{j}e^{s}(x,\omega,t)+\partial_{z_{j}}e^{\sigma}(x,\omega,z,t) \big) \\
+\left(\left.\begin{array}{l}
\sigma(x,\omega,z, E_{1}(x,t)+\partial^{\omega}e^{s}_{1}(x,\omega,t)+D_{z}e^{\sigma}_{1}(x,\omega,z,t))-\\
\sigma(x,\omega,z, E_{2}(x,t)+\partial^{\omega}e^{s}_{2}(x,\omega,t)+D_{z}e^{\sigma}_{2}(x,\omega,z,t))
\end{array}
\right)\right] \\
\times \big[E^{i}(x,t)+\partial^{\omega}_{i}e^{s}(x,\omega,t)+\partial_{z_{i}}e^{\sigma}(x,\omega,z,t)  \big]dxd\mu dz + \\
\int_{\Omega\times\Lambda\times Z}\mu_{ij}(x,\omega,z)\times \partial_{t}\big(H^{j}(x,t)+\partial^{\omega}_{j}h^{s}(x,\omega,t)+\partial_{z_{j}}h^{\sigma}(x,\omega,z,t) \big)\times \\
\big(H^{j}(x,t)+\partial^{\omega}_{j}h^{s}(x,\omega,t)+\partial_{z_{j}}h^{\sigma}(x,\omega,z,t)\big)dxd\mu dz=0.
\end{array}
\end{equation}
Replace $t$ by $s\in (0,T)$ and integrate on $(0,T),\; t\in (0,T)$. Monotonicity of $\sigma$, symmetry of $\eta$ and $\mu$ and null initial values  lead therefore to the fact that a summation of integral of quantities all positive is negative. It follows then that each of them is null, that is 
\begin{equation}\nonumber
\begin{array}{l}
 E_{j}(x,t)+\partial^{\omega}_{j}e^{s}(x,\omega,t)+\partial_{z_{j}}e^{\sigma}(x,\omega,z,t)=0,\\
 H_{j}(x,t)+\partial^{\omega}_{j}h^{s}(x,\omega,t)+\partial_{z_{j}}h^{\sigma}(x,\omega,z,t)=0
\end{array}
\end{equation}
and unicity is established.
\begin{remark}
 It is possible by proceeding as in \cite{17} to obtain microscopic problems in such a way that local functions $h^{\sigma}_{0}$ and $h^{s}_{0}$ are expressed in terms of the function $H_{0}$.
\end{remark}
\end{proof}

%-----------------------------------------------------Bibliographie--------------------------------------------------------------------
%\clearpage\addcontentsline{toc}{chapter}{Bibliographie}
\small

\small
\vspace{1cm} 

\textit{Current adress :} $^{\ddagger }$ The University of Bamenda, Higher Teachers Training
	College, Department of Mathematics, P.O. Box 39, Bambili Cameroon 
\par \textit{E-mail :} fotsotachago@yahoo.fr 
\vspace{0.3cm}
\par \textit{Current adress :} $^{\dagger }$University of Yaounde I, \'{E}cole Normale Sup\'{e}%
	rieure de Yaound\'{e}, P.O. Box 47 Yaounde, Cameroon. 
\par \textit{E-mail :} hnnang@uy1.uninet.cm 
%\vspace{0.3cm}
%\par\textit{Current adress :} $^{\dagger ^{\dagger }}$University of Yaounde I, Faculty of
%	Sciences, P.O. Box 812 Yaounde, Cameroon. 
%\par \textit{E-mail :} gnguetseng@uy1.uninet.cm

\end{document}